\documentclass{article}

\usepackage{amsmath, amssymb, amsthm}
\usepackage{graphicx} % Required for inserting images
\usepackage{nickmath}
\usepackage[round]{natbib}
\usepackage{xcolor}
\usepackage[]{todonotes}
\usepackage{appendix}
\usepackage{setspace}
\usepackage{tikz}

 \usepackage{booktabs}
 \usepackage{subcaption}

\usetikzlibrary{calc}

\newtheorem{lem}{Lemma}
\newtheorem{cor}{Corollary}
\newtheorem{prp}{Proposition}
\newtheorem{thm}{Theorem}

\theoremstyle{definition}
\newtheorem{con}{Condition}
\newtheorem*{con*}{Condition}
\newtheorem{dfn}{Definition}
\newtheorem{axm}{Axiom}
\newtheorem*{axm*}{Axiom}

\theoremstyle{remark}
\newtheorem{rmk}{Remark}
\newtheorem{exm}{Example}

\title{The E-measure}
\author{Nick W. Koning\footnote{n.w.koning@ese.eur.nl}\\ Econometric Institute, Erasmus University Rotterdam}
\date{22 April 2026}

\begin{document}
    \maketitle

    \begin{abstract}
        We introduce the E-measure: a measure-like generalization of the E-value to a class of hypotheses.
        Unlike classical measures, E-measures are closed under infimums instead of addition.
        They arise from a compatibility axiom with logical implications, that there should be at least as much evidence against more specific hypotheses.
        We show that E-measures are the only non-dominated such objects, if the hypothesis class is closed under intersections.
        We propose to use the E-measure to present all the relevant evidence for a problem, where the relevance is captured by the choice of hypothesis class.
        We showcase this by applying the E-measure to decision making, inducing a hypothesis class from the uncertain consequences of decisions.
        This results in uniform E-consequence bounds on decisions, which nest high-probability loss bounds.
        Correcting for multiplicity, we consider `familywise evidence' and `false evidence rate' control, generalizing from errors and discoveries to continuous evidence.
        Remarkably, E-measures control these without multiplicity correction if the hypothesis class is intersection-closed.
        Moreover, we obtain a `frequentist' notion of updating from E-prior to E-posterior.
        Abstracting the notion of a `hypothesis', we advocate for using E-measures for any unknown quantity, leading to predictive E-measures.
        % Overall, we believe E-measures form the natural mathematical language of evidence.
    \end{abstract}
    
    \section{Introduction}
        Hypothesis testing is one of the cornerstones of statistics.
        The E-value generalizes a hypothesis test from a binary reject-or-not outcome to a continuous notion of evidence \citep{koning2024continuous}.
        Its inception has sparked a revolution in the way we look at uncertainty quantification \citep{shafer2021testing, vovk2021evalues, grunwald2024safe, ramdas2025hypothesis}.

        Many applications of the E-value involve putting together a collection of E-values for a collection of hypotheses.
        This includes developments in multiple testing \citep{wang2022false, xu2025bringing}, the construction of confidence sets \citep{howard2021time, waudbysmith2023estimating, koobs2026equivalence}, insights into sequential testing \citep{ramdas2022admissible, koning2026anytime}, and the study of loss bounds on decisions \citep{grunwald2023posterior, koning2025fuzzy}.

        The main contribution of this paper is to fuse a collection of E-values together into a single object: \emph{the E-measure}.
        The E-measure can be viewed as a measure-like generalization of the E-value from a single hypothesis to a class of hypotheses.
        It simultaneously presents the evidence against each hypothesis in the class.
        
        The E-measure is designed to satisfy a single axiom: the Closure axiom.
        This defining property of an E-measure can be viewed as a continuous generalization of the closure principle from binary multiple testing \citep{marcus1976closed}.
        
        \begin{axm*}[Closure]
            The evidence against a hypothesis equals the least evidence against the hypotheses that imply it.
        \end{axm*}          

        Our main result about the E-measure is to show that it can be forged from two natural more primitive axioms, which relate evidence to logical impossibilities and implications.
        In particular, we first extend the notion of validity of E-values to E-measures.
        We then show that any valid precursor to an E-measure satisfying both the Impossibility and Implication axioms is dominated by a valid E-measure, if the class of hypotheses is closed under intersections.
        This implies any valid and non-dominated such precursor must be an E-measure.
        
        \begin{axm*}[Impossibility]
            Logically impossible hypotheses receive maximal evidence.
        \end{axm*}
    
        \begin{axm*}[Implication]
            If a hypothesis logically implies another hypothesis, then there cannot be less evidence against it.
        \end{axm*}

        We propose to use the E-measure to capture all the relevant evidence for a problem, where `relevance' is described by the associated hypothesis class.
        We call this the \emph{E-principle} as a nod to the classical likelihood principle, which loosely states that all the relevant statistical information is captured by the likelihood.
        This E-principle can even be viewed as an extension of the likelihood principle, since (the reciprocal of) the likelihood corresponds to an E-measure for the power set hypothesis class; the most granular possible hypothesis class.
        
        Beyond likelihood functions, classical examples of (binary) E-measures are confidence sets and rejection sets of multiple testing procedures, because these can be interpreted as reporting a test outcome (binary E-value) for a collection of hypotheses.
        
        \subsection{Using E-measures}
            We develop multiple extensions and applications of E-measures, generalizing several state-of-the-art results.

            \begin{itemize}                    
                \item \textbf{Decision-making}.
                    We use E-measures for decision making under uncertainty.
                    Following the E-principle, we induce a hypothesis class that contains the relevant claims about the consequences of decisions.
                    We use this to develop E-consequence bounds, revealing a deep generalization of several recently studied probabilistic and E-value-based loss bounds \citep{grunwald2023posterior, andrews2025certified, kiyani2025decision, koning2025fuzzy}.

                \item \textbf{Multiplicity}.
                    We study cross-hypothesis notions of validity for E-measures to account for multiplicity.
                    This leads us to define notions of `Familywise Evidence' and `False Evidence Rate'.
                    These extend the classical Familywise Error Rate and False Discovery Rate from binary testing to continuous evidence.
                    If the hypothesis class is intersection-closed, we show that plain hypothesis-wise validity is equivalent to both Familywise Evidence Control, and uniform False Evidence Rate control.
                    We describe how these connect to recent E-value-based methods for the False Discovery Rate \citep{wang2022false, xu2025bringing}.
                    Going beyond the Familywise Evidence and False Evidence Rate, we obtain continuous analogues of the general binary multiple testing errors considered by \citet{xu2025bringing}.
                
                \item \textbf{From E-prior to E-posterior}. 
                    We develop a `frequentist' notion of updating, going from a prior E-measure to a posterior E-measure.
                    To ensure that the posterior is an E-measure, we rely on the intersection-closure assumption and we define a notion of closed-multiplication of E-measures.

                \item \textbf{E-measure processes}. 
                    As E-values are highly popular in sequential settings, we develop an E-measure generalization: the E-measure process.
                    If the hypothesis class is intersection-closed, we find that a non-dominated E-measure process evaluated at a hypothesis equals the infimum over the E-measure processes for the least hypotheses that imply it.
                    This recovers the landmark result of \citet{ramdas2022admissible}, that an admissible E-process for a composite hypothesis must equal the infimum of pointwise E-processes.

                \item \textbf{Abstracting hypotheses}.
                    The standard convention in statistics is that a `hypothesis' is a collection of probability distributions.
                    We find that our framework does not rely on this convention.
                    Breaking the convention, we recover the foundation of predictive inference by taking hypotheses as subsets of a sample space.
                    We show this recovers a result of \citet{koning2025fuzzy} that a single valid E-value suffices for predictive validity.
                    We also show how one can pushforward E-measures and their validity onto different spaces.
                    
                \item \textbf{E-integration}. 
                    In Appendix \ref{sec:E-integration}, we study integration of non-negative functions under E-measures, deriving E-Markov and Post-hoc E-Markov (in)equalities.
                    If the hypothesis class is intersection-closed, this corresponds to a notion of loss-aggregation developed by \citet{grunwald2023posterior}.
            \end{itemize}

        \subsection{Relationship to literature}
            This paper builds upon the pioneering work of \citet{grunwald2023posterior}.
            He coins the term `E-posterior' for a collection of E-values as a function over a parameter space, applying it to derive data-informed loss bounds for decision making.
            We dedicate Section \ref{sec:grunwald} to explain the relationship to our results on decision making.
            There, we find that the loss bound of Gr\"unwald follows from applying the E-Markov inequality to our E-consequence bound.
            We reserve the term `E-posterior' for settings in which we have an explicit `E-prior'.
            
            Our notion of closure can be viewed as a continuous-evidence analogue of the classical closure principle for collections of binary `reject-or-not' tests in multiple testing \citep{marcus1976closed}.
            Thresholding an E-measure at a fixed level $\alpha$ recovers a closed testing procedure.
            In such a binary setting, the Implication axiom translates into the statement that if a rejected hypothesis implies another hypothesis, then the implied hypothesis must also be rejected.
            The closure axiom translates into the closure of a rejection set.

            Recent work by \citet{xu2025bringing} develops an `e-Closure Principle' for binary multiple testing, showing that multiple testing procedures controlling an expected testing loss can be represented through specific collections of E-values.
            A deep link to our work is that they find that the validity of such multiple testing procedures hangs on the validity of E-values for the least (`intersection') hypotheses.
            Such least hypotheses also prominently feature in our work.
            We detail the connection to their main result in Section \ref{sec:general_multiplicity}.
            
            The (reciprocal of) an E-measure has been abstractly studied under the name `maxitive measures' introduced by \citet{shilkret1971maxitive}.
            From this angle, our work can be viewed as uniting maxitive measures with E-values, hypotheses and statistical validity, as well as the closure principle from multiple testing.
            
    \section{Hypothesis classes and spaces}\label{sec:hypothesis_class}
        We consider a measurable space $(\X, \Sigma)$, where $\X$ represents the sample space and $\Sigma$ is a $\sigma$-algebra of subsets of $\X$ that represents the available information.
        We equip this space with a model $\P$: some collection of probability measures on $\X$.
        We follow the standard convention in statistics by defining a hypothesis $H$ as a subset of this model $H \subseteq \P$: some collection of probabilities.\footnote{We propose to break this convention in Section \ref{sec:abstract_hypotheses}.}

        To facilitate the interpretation, it will often be helpful to think of a `true probability' $P^* \in \P$ that we intend to recover.
        A hypothesis $H$ may then be interpreted as the statement that it contains the true probability: `$P^* \in H$'.
        We say that a hypothesis is \emph{true} if $P^* \in H$.
        We say that a hypothesis $H$ \emph{implies} a hypothesis $H'$ if $H \subseteq H'$, since $P^* \in H$ then implies $P^* \in H'$.
        
        A central object in this work is a hypothesis class $\H$, which we define as some union-closed set of hypotheses.
        The hypothesis class $\H$ can be interpreted as representing the statements about $P^*$ that we wish to learn about.
        We equip $\P$ with a hypothesis class to create a measurable-like \emph{hypothesis space} $(\P, \H)$.
        
        \begin{dfn}[Hypothesis class]
            A collection $\mathcal{H}$ of hypotheses $H \subseteq \mathcal{P}$ is a hypothesis class if it is closed under (arbitrary) unions.
        \end{dfn}

        \begin{dfn}[Hypothesis space]
            We say that $(\P, \H)$ is a hypothesis space if $\H$ is a hypothesis class of subsets of $\P$.
        \end{dfn}
        
        \begin{rmk}[Empty set]
            We consider the empty union to be among the unions, so that a hypothesis class contains the empty set $\emptyset$.
            We write $\H^\emptyset$ as a shorthand for $\H \setminus \{\emptyset\}$.
        \end{rmk}
        
        We say that a hypothesis $H$ is $\mathcal{H}$-\emph{measurable} if $H \in \mathcal{H}$, so that measurable statements are exactly those statements about $P^*$ we can consider with the hypothesis class.
        A function $f : \P \to \P'$ between two hypothesis spaces $(\P, \H)$, $(\P', \H')$ is said to be ($\H$, $\H'$)-measurable if $f^{-1}(H') \in \H$, for every $H' \in \H'$.
        We use the shorthand $\H$-measurable for a function $f : \P \to \P'$ that is $(\H, 2^{\P'})$-measurable.
        We say that a subset $\S \subseteq \H$ \emph{generates} $\H$ if the union closure of $\S$ is $\H$.
        
        \begin{exm}[Single hypothesis]
            A smallest non-trivial hypothesis class is $\H = \{\emptyset, H\}$ for some hypothesis $H$.
        \end{exm}
        
        \begin{exm}[Nested]\label{exm:nested}
            The union-closure of a nested collection of hypotheses is again nested, and so a hypothesis class.
            Such a nested hypothesis class underlies `one-sided' or `non-inferiority' testing, as recently studied in an E-value context in \citet{koobs2026equivalence}.
        \end{exm}

        \subsection{Least hypotheses and intersection-closure}
            While closure of a hypothesis class $\H$ under arbitrary unions suffices for defining an E-measure, many of our results rely on the additional condition that $\H$ is also closed under (arbitrary) intersections.
            
            \begin{con}[Intersection-closure]
                We say a hypothesis space $(\P, \H)$ is \emph{intersection-closed} if $\H$ is closed under (arbitrary) intersections and $\P \in \H$.
            \end{con}
            
            \begin{rmk}[Not a $\sigma$-algebra]
                We stress that intersection-closure does not make $\H$ a $\sigma$-algebra, as we do not assume closure under complements.
                As a consequence, a hypothesis $H$ does not necessarily come with a natural `alternative hypothesis' $H^C$ in this framework.
            \end{rmk}

            \begin{rmk}[Alexandrov topology]
                A space that is closed under arbitrary unions and intersections, and contains $\emptyset$ and $\P$ is also known as an Alexandrov topology.
            \end{rmk}

            Lemma \ref{lem:intersection_closed} relates intersection-closure to the existence of a \emph{least hypothesis} $H_P$ for every point $P \in \P$: a `smallest' hypothesis that contains $P$.
            Its proof, alongside all other omitted proofs can be found in the appendix.
            
            \begin{dfn}[Least hypothesis]
                Given some $P \in \P$, we say that $H_P$ is its least hypothesis if $P \in H_P$ and $P \in H$ implies that $H \supseteq H_P$, for every $H \in \H$.
            \end{dfn}

            \begin{lem}[Least hypothesis and intersection-closure]\label{lem:intersection_closed}
                Every $P \in \P$ has a least hypothesis $H_P \in \H$ if and only if $(\P, \H)$ is intersection-closed.
                Under these equivalent conditions, the least hypothesis of $P \in \P$ is
                \begin{align*}
                    H_P = \bigcap\{H \in \H : P \in H\}.
                \end{align*}
            \end{lem}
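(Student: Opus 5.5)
The plan is to prove both directions separately, keeping $\H$ closed under arbitrary unions (and containing $\emptyset$) throughout. The explicit formula for $H_P$ will fall out of the reverse direction.

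\emph{Intersection-closure implies least hypotheses.} Assume $(\P,\H)$ is intersection-closed and fix $P \in \P$. Set $\mathcal{S}_P = \{H \in \H : P \in H\}$. Since $\P \in \H$ by assumption, $\mathcal{S}_P$ is nonempty, so its intersection $H_P := \bigcap \mathcal{S}_P$ is a genuine intersection of members of $\H$. By intersection-closure, $H_P \in \H$. Moreover $P \in H_P$, because $P$ lies in every member of $\mathcal{S}_P$. Finally, any $H \in \H$ with $P \in H$ belongs to $\mathcal{S}_P$, hence $H \supseteq H_P$. So $H_P$ is the least hypothesis of $P$, and this also gives the stated formula.

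\emph{Least hypotheses imply intersection-closure.} Suppose every $P$ has a least hypothesis $H_P \in \H$.
\begin{itemize}
\item First we get $\P \in \H$: we have $\P = \bigcup_{P \in \P} H_P$, since each $P \in H_P \subseteq \P$, and this union lies in $\H$ by union-closure.
\item Next, take an arbitrary family $\{H_i\}_{i \in I} \subseteq \H$ and put $G = \bigcap_i H_i$. The empty family is handled by the convention $\bigcap \emptyset = \P$, which is already in $\H$.
\item The key step is to show $G = \bigcup_{P \in G} H_P$. For $\supseteq$ there is nothing to prove beyond $P \in H_P$. For $\subseteq$, if $P \in G$ then $P \in H_i$ for every $i$, so minimality gives $H_P \subseteq H_i$ for every $i$, hence $H_P \subseteq G$.
\item The right-hand side is a union of members of $\H$, so $G \in \H$.
\end{itemize}

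For the final formula in this direction, note that $H_P \in \mathcal{S}_P$ gives $\bigcap \mathcal{S}_P \subseteq H_P$, while minimality gives $H_P \subseteq H$ for all $H \in \mathcal{S}_P$. Hence $H_P = \bigcap \mathcal{S}_P$. The same argument shows that least hypotheses are unique when they exist.

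The main obstacle is the key step of the reverse direction, namely representing an arbitrary intersection as a union of least hypotheses. It works only because union-closure is arbitrary, so unions indexed by uncountable $G$ are allowed. The edge cases, the empty family and the empty set, are handled by the conventions $\emptyset \in \H$ and $\bigcap \emptyset = \P$.
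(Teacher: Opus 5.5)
Your proof is correct and follows essentially the same route as the paper: one direction writes an arbitrary intersection as the union of least hypotheses $\bigcap_i H_i = \bigcup_{P \in \bigcap_i H_i} H_P$, and the other takes $H_P := \bigcap\{H \in \H : P \in H\}$, which is well-defined because $\P \in \H$. Your extra check that any least hypothesis must equal this intersection, so that least hypotheses are unique, is a tidy addition the paper leaves implicit.
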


            In Lemma \ref{lem:canonical_cover}, we further show that intersection-closure is equivalent to the existence of a canonical cover for every hypothesis $H \in \H$: the least hypotheses $\{H_P : P \in H\}$.
            An implication is that the collection of least hypotheses generates $\H$.
            
            \begin{lem}[Canonical cover]\label{lem:canonical_cover}
                Suppose that $H_P \in \H$ for every $P \in \P$.
                Then $(\P, \H)$ is intersection-closed with least hypotheses $H_P$, $P \in \P$, if and only if both $P \in H_P$ for every $P \in \P$ and
                \begin{align*}
                    H = \bigcup_{P \in H} H_P, \quad \textnormal{for every } H \in \H.
                \end{align*}
            \end{lem}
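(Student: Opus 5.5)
The plan is to prove the two directions separately, using Lemma \ref{lem:intersection_closed} to connect intersection-closure with the existence of least hypotheses. Throughout, we are given a family $\{H_P\}_{P \in \P}$ with each $H_P \in \H$.

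\textbf{Forward direction.} Assume $(\P, \H)$ is intersection-closed and the $H_P$ are its least hypotheses. Then $P \in H_P$ holds by the definition of a least hypothesis. For the cover identity, fix $H \in \H$.
\begin{itemize}
\item For every $P \in H$, leastness gives $H_P \subseteq H$, so $\bigcup_{P \in H} H_P \subseteq H$.
\item Conversely, each $P \in H$ lies in its own $H_P$, so $H \subseteq \bigcup_{P \in H} H_P$.
\end{itemize}
This direction is routine and only uses the defining property of least hypotheses.

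\textbf{Backward direction.} Assume $P \in H_P$ for all $P$ and that $H = \bigcup_{P \in H} H_P$ for every $H \in \H$. I first show that each $H_P$ is the least hypothesis of $P$. We already have $P \in H_P$. Now take any $H \in \H$ with $P \in H$. Then $H_P$ is one of the sets in the union $\bigcup_{Q \in H} H_Q$, which equals $H$ by assumption, so $H_P \subseteq H$. Hence every $P \in \P$ has a least hypothesis in $\H$, namely $H_P$.

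It remains to get intersection-closure. Lemma \ref{lem:intersection_closed} states that every point having a least hypothesis is equivalent to intersection-closure, so invoking it finishes the argument.

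\textbf{Main obstacle.} The only delicate point is whether Lemma \ref{lem:intersection_closed} fully covers the requirement $\P \in \H$ in the intersection-closure condition. If I prefer not to rely on that, I would verify it directly from the cover identity.
\begin{itemize}
\item \emph{$\P \in \H$:} we have $\P = \bigcup_{P \in \P} H_P$, since each $P \in H_P$ and each $H_P \subseteq \P$. This is a union of members of $\H$, so it lies in $\H$ by union-closure.
\item \emph{Arbitrary intersections:} let $\{H_i\} \subseteq \H$ and $G = \bigcap_i H_i$. For each $P \in G$ we have $P \in H_i$ for every $i$, so leastness gives $H_P \subseteq H_i$ for every $i$, hence $H_P \subseteq G$. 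Therefore $G = \bigcup_{P \in G} H_P$, which is in $\H$ by union-closure.
\item \emph{Empty family:} the intersection of the empty family is $\P$ by convention, which is the previous case.
\end{itemize}
Both directions rely only on union-closure, which holds for any hypothesis class by definition.
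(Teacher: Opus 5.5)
Your proposal is correct and follows essentially the same route as the paper's proof. The forward direction uses leastness together with $P \in H_P$, and the backward direction shows each $H_P$ is least and then invokes Lemma \ref{lem:intersection_closed}. Your optional direct verification of intersection-closure is also correct but unnecessary: it just reproduces the proof of Lemma \ref{lem:intersection_closed}, which already establishes $\P \in \H$ via $\P = \bigcup_{P \in \P} H_P$.
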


            \begin{exm}[Power set]
                If $\H = 2^\P$, then the least hypotheses are the singleton sets $\{P\}$, $P \in \P$.
                Here, the canonical cover of $H$ is simply the collection of its singletons.
            \end{exm}

            \begin{exm}[Partition]
                Suppose that $\P$ is partitioned into disjoint cells, and that $\H$ is the union-closure of these cells.
                Then every $P \in \P$ has a least hypothesis $H_P$: the cell that contains $P$.
                Here, different points may share the same least hypothesis $H_P$, reflecting that $\H$ does not distinguish between them.
            \end{exm}

            \begin{exm}[Overlap]
                Let $\H = \{\emptyset, \{P_1\}, \{P_1, P_2\}, \{P_1, P_3\}, \{P_1, P_2, P_3\}\}$, with $\P = \{P_1, P_2, P_3\}$.
                Then, $(\P, \H)$ is intersection-closed, and the least hypotheses are $H_{P_1} = \{P_1\}$, $H_{P_2} = \{P_1, P_2\}$ and $H_{P_3} = \{P_1, P_3\}$.
                This shows the least hypotheses need not be cells, as they may overlap.
            \end{exm}

            \begin{exm}[Nested]
                In the nested hypothesis class from Example \ref{exm:nested}, a hypothesis $H$ is the least hypothesis for $P$ if and only if $P \in H \setminus \cup\{H' \in \H : H' \subsetneq H\}$.
            \end{exm}

        \subsection{Intersection-closure and preorders}\label{sec:preorder_induced}
            The intersection-closure property is an order-theoretic structure on $\P$.
            Indeed, intersection-closed hypothesis spaces are exactly those induced by a preorder $\precsim$ on $\P$.
            This characterization is the key to our results in Section \ref{sec:decisions}, where we induce a hypothesis class from a natural preorder on the consequences of decisions.
            
            \begin{dfn}[Preorder-induced hypothesis class]
                Let $\precsim$ be a preorder on $\P$.
                For every $P \in \P$, we define the principal upper set
                \begin{align*}
                    H_\precsim(P)
                        := \{P' \in \P : P \precsim P'\}, \quad P \in \P.
                \end{align*}
                We define $\H_\precsim$ as the smallest (union-closed) hypothesis class containing all these principal upper sets $H_\precsim(P)$, $P \in \P$.
            \end{dfn}

            \begin{prp}\label{prp:order_least_true}
                A hypothesis space $(\P, \H)$ is intersection-closed if and only if there exists a preorder $\precsim$ on $\P$ such that $\H = \H_\precsim$.
                Under these equivalent conditions, the preorder is determined by $\H$ through $P \precsim_\H P' \iff P' \in H_P$, and the principal upper sets $H_\precsim(P)$ are the least hypotheses.
            \end{prp}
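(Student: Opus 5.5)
We prove the two directions separately, and in both we use Lemma \ref{lem:intersection_closed} and Lemma \ref{lem:canonical_cover} to pass between intersection-closure and least hypotheses.

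\emph{Sufficiency.} Suppose $\precsim$ is a preorder on $\P$ and $\H = \H_\precsim$. By definition, $\H_\precsim$ is the union-closure of the principal upper sets, so its elements are exactly the unions of principal upper sets. Each such union is an upper set: if $P' \in H$ and $P' \precsim P''$, then by transitivity $P''$ lies in the same principal upper set as $P'$. Conversely, every upper set $U$ satisfies $U = \bigcup_{P \in U} H_\precsim(P)$, using reflexivity ($P \in H_\precsim(P)$) and the upper-set property. Hence $\H_\precsim$ is exactly the collection of all upper sets of $\precsim$.

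An arbitrary intersection of upper sets is again an upper set. The whole space $\P$ is an upper set, and so is the empty set. Therefore $(\P, \H)$ is intersection-closed. The least hypothesis of $P$ is $H_\precsim(P)$: it contains $P$ by reflexivity, and any upper set containing $P$ contains everything above $P$.

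\emph{Necessity.} Suppose $(\P,\H)$ is intersection-closed. By Lemma \ref{lem:intersection_closed}, every $P$ has a least hypothesis $H_P = \bigcap\{H \in \H : P \in H\}$. Define $P \precsim_\H P' \iff P' \in H_P$.

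This relation is a preorder:
\begin{itemize}
\item \emph{Reflexivity} follows from $P \in H_P$.
\item \emph{Transitivity}: if $P' \in H_P$, then $H_P$ is a hypothesis containing $P'$, so $H_{P'} \subseteq H_P$ by leastness. Hence $P'' \in H_{P'}$ gives $P'' \in H_P$.
\end{itemize}

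By construction, the principal upper sets are $H_{\precsim_\H}(P) = H_P$. So $\H_{\precsim_\H}$ is the union-closure of $\{H_P : P \in \P\}$. By Lemma \ref{lem:canonical_cover}, every $H \in \H$ equals $\bigcup_{P \in H} H_P$, which gives $\H \subseteq \H_{\precsim_\H}$. Conversely, each $H_P \in \H$ and $\H$ is union-closed, which gives $\H_{\precsim_\H} \subseteq \H$. Hence $\H = \H_{\precsim_\H}$.

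\emph{Uniqueness.} The claim that the preorder is determined by $\H$ needs a separate argument. Suppose $\H = \H_\precsim$ for some preorder $\precsim$. By sufficiency, the least hypotheses of $\H$ are the sets $H_\precsim(P)$. Least hypotheses are unique, so $H_P = H_\precsim(P)$. Hence $P \precsim P' \iff P' \in H_P \iff P \precsim_\H P'$, so any inducing preorder coincides with $\precsim_\H$.

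The main obstacle is the identification of $\H_\precsim$ with the collection of upper sets. It relies on reflexivity and transitivity in exactly the places indicated above.
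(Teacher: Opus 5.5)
Your proof is correct and essentially matches the paper's. The necessity direction is the same argument: define $\precsim_\H$, check reflexivity and transitivity via leastness, and use Lemma \ref{lem:canonical_cover} to get $\H = \H_{\precsim_\H}$. For sufficiency you identify $\H_\precsim$ with the class of all $\precsim$-upper sets and check intersection-closure directly, whereas the paper shows $H_\precsim(P)$ is the least hypothesis and then invokes Lemma \ref{lem:intersection_closed}; this is only a minor variation, and your uniqueness paragraph spells out a point the paper leaves implicit.
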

            
    \section{Evidence axioms and E-measures}
        \subsection{E-values}
            We quantify the evidence (`E-value') against a hypothesis $H \in \H$ as a value in $[0, \infty]$.
            Here, we use the convention that larger values correspond to stronger evidence.
            Throughout, we latently assume that we prefer to have more evidence.

            In practice, an E-value will be informed by data, in which case we refer to it as an E-variable.
            For the sake of presentation, we postpone the introduction of data to later sections.
            
        \subsection{Impossibility axiom and E-functions}
            Our goal is to present evidence not against a single hypothesis $H$, but against an entire hypothesis class $\H$.
            For this purpose, we introduce \emph{E-functions}.
            
            For a given hypothesis $H$, an E-function returns an amount of evidence (E-value) in $[0, \infty]$ against this hypothesis.
            That is, evidence against the statement that $H$ is true: $P^* \in H$.
            
            \begin{dfn}[E-function]
                A map $\e : \H \to [0, \infty]$ is an E-function for a hypothesis class $\H$ if $\e(\emptyset) = \infty$.
            \end{dfn}

            In our definition of an E-function, we integrate the \emph{Impossibility axiom}, which we believe any reasonable notion of evidence should satisfy: logical impossibilities correspond to maximum evidence.
            As $\H$ contains the empty set $\emptyset$, this immediately translates to the (single) defining property of an E-function, expressing that the statement $P^* \in \emptyset$ is logically impossible.
            
            \begin{axm}[Impossibility]
                We have maximal evidence against logically impossible hypotheses.
            \end{axm}

            Example \ref{exm:test_outcomes} shows that E-functions can be viewed as a continuous generalization both of discovery sets from multiple testing and of confidence sets.
            
            \begin{exm}[Test outcomes as binary E-functions]\label{exm:test_outcomes}
                An example of an E-function is a function (collection) of level $\alpha$ test outcomes: $\e_\alpha : \H^\emptyset \to \{0, 1/\alpha\}$, and $\e_\alpha(\emptyset) = \infty$.
                For a given hypothesis $H \in \H^\emptyset$, such a test outcome is either $\e_\alpha(H) = 0$, encoding a non-rejection, or $\e_\alpha(H) = 1/\alpha$, encoding a rejection at level $\alpha$.
                While these values are somewhat arbitrary now, they are convenient to recover classical Type-I error validity later.
                
                A collection of test outcomes for hypotheses is the typical output of a multiple testing procedure, where the rejected hypotheses are often collected into a discovery set $R_\alpha = \{H \in \H^\emptyset : \e(H) \geq 1/\alpha\}$.

                A common complementary description of a collection of test outcomes is as a confidence set, which instead groups the hypotheses that are not rejected: $C_\alpha = \{H : \e_\alpha(H) < 1 / \alpha\} = R_\alpha^C \setminus \{\emptyset\}$.
                The test outcomes are recovered through test inversion: $\e_\alpha(H) = 1 / \alpha \times \ind{H \not\in C_\alpha}$, $H \in \H^\emptyset$.
            \end{exm}

            \begin{rmk}[Infinity conventions]
                Throughout, we use the conventions $\inf \emptyset = \infty$, $\sup \emptyset = 0$, $c / 0 = \infty$, $c / \infty = 0$, $c > 0$, $0 / 0 = 0$ and $0 \times \infty = \infty \times 0 = 0$.
            \end{rmk}

        \subsection{Implication axiom and E-capacities}
            A second axiom that we believe any reasonable notion of evidence should satisfy is the \emph{Implication axiom}.
            This axiom expresses coherence with logical implications: the hypothesis $H = \{P_1\}$ that `$P_1$ is true' is easier to refute than the hypothesis $H' = \{P_1, P_2\}$ that `either $P_1$ or $P_2$ is true'.
            
            \begin{axm}[Implication]
                If a hypothesis $H$ logically implies another hypothesis $H'$ then we should have at least as much evidence against $H$ as we have against $H'$.
            \end{axm}
            
            We translate this axiom into the defining property of an \emph{E-capacity}.
            This corresponds to assuming that an E-function $\e$ is \emph{antitonic} on $\H$ under set-inclusion.
            
            \begin{dfn}[E-capacity]
                An E-function $\e$ is an E-capacity for a hypothesis class $\H$ if $H \subseteq H'$ implies $\e(H') \leq \e(H)$, for every $H, H' \in \mathcal{H}$.
            \end{dfn}

        \subsection{Closure axiom and E-measures}
            A stronger axiom is the \emph{Closure axiom}, which implies both the implication and impossibility axioms.
            Translating this axiom into a defining property, we obtain the \emph{E-measure}.
            An E-measure is also an E-capacity, so that an E-measure can be viewed as an E-capacity with additional structure.
            
            \begin{axm}[Closure]
                The evidence against a hypothesis is the smallest evidence against any of its sub-hypotheses.
            \end{axm}

            \begin{dfn}[E-measure]
                We say that $\e : \H \to [0, \infty]$ is an E-measure for $\H$ if, for every subset $\S \subseteq \H$,
                \begin{align*}
                    \e\left(\bigcup_{H \in \S} H\right)
                        = \inf_{H \in \S} \e(H)
                \end{align*}
            \end{dfn}

            \begin{lem}
                An E-measure is an E-function.
            \end{lem}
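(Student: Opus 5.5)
The plan is to apply the defining identity of an E-measure to the empty subfamily. An E-function only needs to satisfy $\e(\emptyset) = \infty$, with codomain $[0, \infty]$. The E-measure $\e$ is already given as a map $\H \to [0, \infty]$, so only the value at the empty hypothesis needs checking.

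Take $\S = \emptyset \subseteq \H$ in the definition of an E-measure. The empty union is the empty set, $\bigcup_{H \in \emptyset} H = \emptyset$, and this set lies in $\H$ by the Remark on the empty set, so the left-hand side $\e(\emptyset)$ is defined. The right-hand side is $\inf_{H \in \emptyset} \e(H) = \inf \emptyset$, which equals $\infty$ by the stated infinity conventions. Hence $\e(\emptyset) = \infty$, which is exactly the Impossibility axiom in the form required of an E-function.

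There is no real obstacle here. The only subtlety is confirming that the definition of an E-measure quantifies over every subset $\S \subseteq \H$, including the empty one, and that the infimum is taken in $[0,\infty]$, where $\inf \emptyset = \infty$. Both are fixed explicitly by the paper's conventions, so the argument is a one-line verification.
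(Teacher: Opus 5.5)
Your proof is correct and is the same argument as the paper's. Both apply the defining identity of an E-measure to the empty subfamily of $\H$: the empty union is $\emptyset \in \H$, and the convention $\inf \emptyset = \infty$ then gives $\e(\emptyset) = \infty$.
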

            \begin{proof}
                We consider the empty set to be among the subsets $\mathcal{S} \subseteq \mathcal{H}$, so that $\inf \emptyset = \infty$ leads to $\e(\emptyset) = \infty$.
            \end{proof}

            \begin{lem}
                An E-measure is an E-capacity.
            \end{lem}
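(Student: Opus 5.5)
We need to show two things: that an E-measure $\e$ is an E-function, which the preceding lemma already gives, and that $\e$ is antitonic under set inclusion. So the only new content is antitonicity, and I would derive it directly from the defining infimum property of an E-measure applied to a suitably chosen two-element subfamily $\S \subseteq \H$.

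Fix $H, H' \in \H$ with $H \subseteq H'$. Take $\S = \{H, H'\}$. Since $H \subseteq H'$, the union of the members of $\S$ is $H \cup H' = H'$, and this union lies in $\H$ because $\H$ is union-closed. The E-measure property then gives
\begin{align*}
    \e(H') = \e(H \cup H') = \inf_{G \in \S} \e(G) = \min\{\e(H), \e(H')\} \leq \e(H),
\end{align*}
which is exactly the antitonicity required in the definition of an E-capacity.

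There is essentially no obstacle here. The only point to check is the degenerate case $H = H'$, where $\S$ is a singleton; the identity then reads $\e(H) = \e(H)$ and the inequality holds trivially. The case $H = \emptyset$ needs no separate treatment, since $\e(\emptyset) = \infty$ by the preceding lemma and the inequality is immediate.
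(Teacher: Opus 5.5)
Your proof is correct and matches the paper's argument exactly: you apply the defining infimum property to $\S = \{H, H'\}$ with $H \subseteq H'$ to get $\e(H') = \e(H \cup H') = \min\{\e(H), \e(H')\} \leq \e(H)$. Citing the preceding lemma for the E-function requirement is also the right way to cover the remaining part of the E-capacity definition.
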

            \begin{proof}
                $H \subseteq H'$ implies $\e(H') = \e(H \cup H') = \min\{\e(H), \e(H')\} \leq \e(H)$.
            \end{proof}

            \begin{rmk}[Closed testing]
                If $\e$ is an E-measure, then the rejection set $R_\alpha = \{H \in \H^\emptyset : \e(H) \geq 1/\alpha\}$ is closed under unions: $\bigcup_{H \in \S} H \in R_\alpha \iff \S \subseteq R_\alpha$, for every subset $\S \subseteq \H^\emptyset$.
                Indeed, by the Closure axiom,
                \begin{align*}
                    \e\left(\bigcup_{H \in \S} H\right) \geq 1/\alpha
                        &\iff \inf_{H \in \S} \e\left(H\right) \geq 1/\alpha \\
                        &\iff \e\left(H\right) \geq 1/\alpha, \quad \textnormal{ for every } H \in \S.
                \end{align*}
                This recovers the closed testing principle: a union of hypotheses is rejected if and only if every hypothesis in the union is rejected.
                
                If we restrict ourselves to $\{0, 1/\alpha\}$-valued E-measures, then E-measures are equivalent to closed rejection sets.
            \end{rmk}

            \begin{rmk}[Measures, p-values, maxitive measure and capacities]
                E-measures are inverted compared to traditional measures, assigning smaller values to larger sets.
                If monotonicity is preferred over antitonicity, we can simply flip from the e-value scale to the p-value scale: $p = 1 / \e$.
                The `$p$-measure' $p : \mathcal{H} \to [0, \infty]$ satisfies $p\left(\bigcup_{A \in \mathcal{S}} A\right) = \sup_{A \in \mathcal{S}} p(A)$, swapping minimization for maximization.
                The closure axiom can be interpreted as replacing the closure axiom of classical measures under addition by closure under infimums.
                On this flipped scale, an E-capacity is known simply as a capacity, and an E-measure as a maxitive measure \citep{shilkret1971maxitive}.
            \end{rmk}

        \subsection{E-densities}
            E-measures offer an important practical advantage over E-capacities: if we know the value of an E-measure for a subset of hypotheses $\mathcal{S} \subseteq \mathcal{H}$ that generates $\mathcal{H}$, then we can use the closure axiom to recover the full E-measure on $\mathcal{H}$.

            If $(\P, \H)$ is intersection-closed, then the collection of least hypotheses $\{H_P : P \in \P\}$ generates $\H$.
            This means it suffices to present the evidence over this class of least hypotheses $H_P \mapsto \e(H_P)$.
            We refer to this as an \emph{E-density}.
            This is not true for E-capacities, as their defining property only provides bounds on the evidence of other hypotheses, which is not enough to recover the entire E-capacity.

            \begin{rmk}
                If desired, one could extend this E-density over the least hypotheses to $2^\P$ by considering $P \mapsto \e(H_P)$.
                This of course does not provide any additional information.
                We explore such extensions to $2^\P$ in more detail in Appendix \ref{appn:canonical_extension}.
            \end{rmk}
            
        \section{Closing E-functions into E-measures}\label{sec:closing_e_functions}
            Every E-function $\e$ can be \emph{closed} into an E-measure $\overline{\e}$.
            This \emph{closure} of $\e$ is the smallest E-measure that dominates it.

            For a given hypothesis $H$, the closure can be viewed as correcting $\e(H)$ upward as little as possible.
            For each cover $\T$ of $H$, the closure axiom forces the value at $H$ to be at least $\inf_{H' \in \T} \e(H')$.
            Taking the supremum over all covers yields the strongest such lower bound: $\overline{\e}(H)$.
            
            \begin{dfn}[Closure]\label{dfn:closure}
                Given an E-function $\e$, we define its closure $\overline{\e}$ as
                \begin{align}\label{eq:closure}
                     \overline{\e}(H)
                            = \sup_{\T \in \C(H)} \inf_{H' \in \T} \e(H'),
                \end{align}
                where $\C(H) := \{\T \subseteq \H : H \subseteq \cup_{H' \in \T} H'\}$ denotes the collection of covers of the hypothesis $H$.
            \end{dfn}
            
            \begin{prp}\label{prp:induced_E-measure}
                If $\e$ is an E-function then its closure $\overline{\e}$ is an E-measure.
            \end{prp}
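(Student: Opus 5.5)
We need to show that for every $\S \subseteq \H$, writing $U = \bigcup_{H \in \S} H$, we have $\overline{\e}(U) = \inf_{H \in \S} \overline{\e}(H)$. We prove the two inequalities separately, working directly from the definition \eqref{eq:closure}. The key observation is that $\C(\cdot)$ is antitone under inclusion: if $H \subseteq H'$ then every cover of $H'$ is a cover of $H$, so $\C(H') \subseteq \C(H)$. The empty family $\S = \emptyset$ gives $U = \emptyset$, and both sides equal $\infty$; we check this separately below.

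For the inequality $\overline{\e}(U) \leq \inf_{H \in \S}\overline{\e}(H)$, we use antitonicity. For each $H \in \S$ we have $H \subseteq U$, hence $\C(U) \subseteq \C(H)$. A supremum over a smaller index set is smaller, so $\overline{\e}(U) \leq \overline{\e}(H)$. Taking the infimum over $H \in \S$ gives the claim, and this holds trivially if $\S$ is empty.

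For the reverse inequality $\overline{\e}(U) \geq \inf_{H \in \S}\overline{\e}(H)$, we glue covers together.
\begin{itemize}
\item If the right-hand side is $0$ there is nothing to show, so fix any $c < \inf_{H \in \S}\overline{\e}(H)$.
\item For each $H \in \S$, since $\overline{\e}(H) > c$, the definition of supremum gives a cover $\T_H \in \C(H)$ with $\inf_{H' \in \T_H} \e(H') > c$. This uses a choice of one cover per $H$.
\item Set $\T = \bigcup_{H \in \S} \T_H$. Its union contains every $H \in \S$, hence contains $U$, so $\T \in \C(U)$.
\item Every element of $\T$ lies in some $\T_H$, so $\inf_{H' \in \T}\e(H') \geq c$.
\item Therefore $\overline{\e}(U) \geq c$, and letting $c$ increase to the infimum gives the inequality.
\end{itemize}

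This gluing step is the only real content of the proof. The points needing care are the strictness of $c$ and the infinite values. If the infimum is $\infty$, we run the argument for every finite $c$. Covers are arbitrary subfamilies of $\H$, so $\T$ is admissible with no further conditions.

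Finally, the empty case. For $\S = \emptyset$ we need $\overline{\e}(\emptyset) = \infty$, consistent with the convention $\inf\emptyset = \infty$. This is immediate because the empty family $\T = \emptyset$ covers $\emptyset$, and $\inf_{H' \in \emptyset} \e(H') = \infty$. Alternatively, the family $\{\emptyset\}$ covers $\emptyset$ and $\e(\emptyset) = \infty$ because $\e$ is an E-function.

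Two side remarks are worth noting, though neither is needed for the statement. First, $\overline{\e} \geq \e$, via the cover $\{H\}$. Second, $\overline{\e}$ is the smallest E-measure dominating $\e$: any E-measure $\e' \geq \e$ satisfies $\e'(H) \geq \e'(\bigcup \T) = \inf_{\T}\e' \geq \inf_{\T}\e$ for every cover $\T$, where the first inequality holds because an E-measure is an E-capacity and $H \subseteq \bigcup \T$.
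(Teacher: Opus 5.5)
Your proof is correct and essentially the same as the paper's. The $\leq$ direction uses $\C(U) \subseteq \C(H)$ for $H \subseteq U$, and the $\geq$ direction fixes a level below $\inf_{H \in \S}\overline{\e}(H)$, chooses one cover $\T_H$ per $H \in \S$ that beats it, and glues these into a single cover of $U$. Your separate treatment of $\S = \emptyset$ is fine but not needed, since the gluing argument already covers it: the empty family covers $\emptyset$.
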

            
            \begin{thm}[Smallest dominating E-measure]\label{thm:smallest_dominating_E-measure}
                An E-function $\e$ is dominated by its closure $\overline{\e}$.
                Every E-measure $\e'$ that dominates an E-function $\e$ also dominates its closure $\overline{\e}$.
                In particular, if $\e$ is an E-measure, then $\e = \overline{\e}$.
            \end{thm}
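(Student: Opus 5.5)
The plan is to prove the three claims in order, working directly from the definition \eqref{eq:closure} of the closure. Proposition \ref{prp:induced_E-measure} supplies that $\overline{\e}$ is an E-measure; the third claim needs this.

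\emph{Domination.} Fix $H \in \H$ and take the trivial cover $\T = \{H\} \in \C(H)$. Then
\begin{align*}
    \overline{\e}(H) \geq \inf_{H' \in \{H\}} \e(H') = \e(H).
\end{align*}
This settles $H = \emptyset$ as well: $\overline{\e}(\emptyset) \geq \e(\emptyset) = \infty$. (The empty cover is also in $\C(\emptyset)$ and gives $\inf \emptyset = \infty$.)

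\emph{Minimality.} Let $\e'$ be an E-measure with $\e' \geq \e$ pointwise. Fix $H$ and any cover $\T \in \C(H)$, and set $U = \bigcup_{H' \in \T} H'$. The main obstacle is that $U$ need not equal $H$; it only contains it. I would handle this as follows.
\begin{itemize}
    \item $U \in \H$, since $\H$ is union-closed.
    \item An E-measure is an E-capacity, so $H \subseteq U$ gives $\e'(H) \geq \e'(U)$.
    \item The closure axiom for $\e'$ gives $\e'(U) = \inf_{H' \in \T} \e'(H')$.
    \item Domination gives $\inf_{H' \in \T} \e'(H') \geq \inf_{H' \in \T} \e(H')$.
\end{itemize}
Chaining these, $\e'(H) \geq \inf_{H' \in \T} \e(H')$ for every cover $\T \in \C(H)$. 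Taking the supremum over $\C(H)$ yields $\e'(H) \geq \overline{\e}(H)$.

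\emph{Fixed point.} If $\e$ is itself an E-measure, apply minimality with $\e' = \e$, which trivially dominates itself. This gives $\e \geq \overline{\e}$. Combined with domination, $\e = \overline{\e}$.
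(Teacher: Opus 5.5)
Your proof is correct and follows the paper's argument essentially step for step. The trivial cover $\{H\}$ gives domination, and for minimality you chain antitonicity of $\e'$, the closure axiom on the cover, and $\e' \geq \e$, then take the supremum over covers; the fixed-point claim follows by setting $\e' = \e$, which uses only domination and minimality, so it does not actually need Proposition \ref{prp:induced_E-measure}.
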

            
            \begin{dfn}[Domination]
                We say that an E-function $\e$ dominates another E-function $\e'$ if $\e(H) \geq \e'(H)$, for every $H \in \H$.
            \end{dfn}
  
            \begin{rmk}[Closure operator]
                Theorem \ref{thm:smallest_dominating_E-measure} shows that $\e \mapsto \overline{\e}$ is a `closure operator' on the set of E-functions.
            \end{rmk}
            
            \begin{exm}
                Suppose $\P = \{P_1, P_2\}$ and $\H = 2^\P$, which has least hypotheses $\{P_1\}$ and $\{P_2\}$.
                Consider the E-capacity $\e(\emptyset) = \infty$, $\e(\{P_1\}) = 4$, $\e(\{P_2\}) = 2$, $\e(\{P_1, P_2\}) = 1$.
                This is not an E-measure, as the closure axiom would require $\e(\{P_1, P_2\}) = \inf\{\e(\{P_1\}), \e(\{P_2\})\} = \inf\{4, 2\} = 2$.
                Its closure $\overline{\e}$ coincides with $\e$, except for $\overline{\e}(\{P_1, P_2\}) = 2 > 1 = \e(\{P_1, P_2\})$.
            \end{exm}
            
        \subsection{Closure for intersection-closed hypothesis classes}
            The construction \eqref{eq:closure} of the closure $\overline{\e}$ of an E-function $\e$ maximizes over all covers.
            Theorem \ref{thm:closure_least_true_hypothesis} shows that if $(\P, \H)$ is intersection-closed, the canonical cover $H = \bigcup_{P \in H} H_P$ attains the optimum if $\e$ is an E-capacity.
            This result is fundamental to many of our later results.

            \begin{thm}\label{thm:closure_least_true_hypothesis}
                Suppose $(\P, \H)$ is intersection-closed with least hypotheses $H_P$, $P \in \P$.
                Let $\e$ be an E-capacity.
                Then, its closure equals
                \begin{align*}
                    \overline{\e}(H)
                        = \inf_{P \in H} \e(H_P), \quad H \in \H.
                \end{align*}
            \end{thm}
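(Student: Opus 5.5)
We prove the identity by establishing the two inequalities separately, working directly from the definition \eqref{eq:closure} of $\overline{\e}(H)$ as a supremum over covers $\T \in \C(H)$.

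For the lower bound $\overline{\e}(H) \geq \inf_{P \in H} \e(H_P)$, it suffices to exhibit one cover. By Lemma \ref{lem:canonical_cover}, the canonical cover $\T_H := \{H_P : P \in H\}$ satisfies $H = \bigcup_{P \in H} H_P$, and each $H_P \in \H$ by Lemma \ref{lem:intersection_closed}, so $\T_H \in \C(H)$. Plugging $\T_H$ into \eqref{eq:closure} gives $\overline{\e}(H) \geq \inf_{H' \in \T_H} \e(H') = \inf_{P \in H} \e(H_P)$. The empty hypothesis $H = \emptyset$ is consistent with this: the canonical cover is then the empty collection, both sides are $\infty$, and $\overline{\e}(\emptyset) = \infty$ because the empty collection lies in $\C(\emptyset)$.

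For the upper bound $\overline{\e}(H) \leq \inf_{P \in H} \e(H_P)$, fix an arbitrary cover $\T \in \C(H)$ and an arbitrary $P \in H$. Since $P \in H \subseteq \bigcup_{H' \in \T} H'$, there is some $H' \in \T$ with $P \in H'$. By the defining property of the least hypothesis, $H_P \subseteq H'$, and antitonicity of the E-capacity then gives $\e(H') \leq \e(H_P)$. Hence
\begin{align*}
    \inf_{H'' \in \T} \e(H'') \leq \e(H') \leq \e(H_P).
\end{align*}
This holds for every $P \in H$, so taking the infimum over $P$ yields $\inf_{H'' \in \T} \e(H'') \leq \inf_{P \in H} \e(H_P)$. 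Taking the supremum over $\T \in \C(H)$ gives the claimed upper bound.

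The main point requiring care is this second step, because it is the only place where both hypotheses are used. Intersection-closure guarantees that every $P$ has a least hypothesis contained in any member of the cover that contains $P$. The E-capacity property transfers this inclusion into an inequality between evidence values. Without antitonicity the argument fails, since an arbitrary cover could contain sets with larger $\e$-values than $\e(H_P)$. Beyond this, only the edge cases involving $\infty$ and the empty collection need checking, and these follow from the convention $\inf \emptyset = \infty$.
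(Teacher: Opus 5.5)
Your proposal is correct and follows essentially the same route as the paper's proof: the lower bound comes from plugging the canonical cover $\{H_P : P \in H\}$ from Lemma~\ref{lem:canonical_cover} into the supremum over covers. The upper bound comes from fixing an arbitrary cover, using leastness to get $H_P \subseteq H'$ for the covering set $H' \ni P$, applying antitonicity, and then taking the infimum over $P$ and the supremum over covers.
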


            Under this condition, we obtain a nice interpretation of the closure $\overline{\e}$ as the unique E-measure on $\H$ that agrees with the E-capacity $\e$ on the least hypotheses.

            \begin{cor}\label{cor:closure_least_true}
                Under the conditions of Theorem \ref{thm:closure_least_true_hypothesis}, $\overline{\e}$ is the unique E-measure for which $\overline{\e}(H_P) = \e(H_P)$, for every least hypothesis $H_P$, $P \in \P$.
            \end{cor}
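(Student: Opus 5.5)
The corollary has two parts, existence and uniqueness, and I would prove them separately. Existence is that $\overline{\e}$ is itself an E-measure agreeing with $\e$ on the least hypotheses. Uniqueness is that any E-measure with these values on the least hypotheses must equal $\overline{\e}$. Both parts should follow quickly from Theorem \ref{thm:closure_least_true_hypothesis} combined with Lemma \ref{lem:canonical_cover}.

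For existence, Proposition \ref{prp:induced_E-measure} already shows that $\overline{\e}$ is an E-measure, so only the agreement on least hypotheses remains. Fix $P \in \P$. By Theorem \ref{thm:closure_least_true_hypothesis},
\begin{align*}
    \overline{\e}(H_P) = \inf_{P' \in H_P} \e(H_{P'}).
\end{align*}
For each $P' \in H_P$, the hypothesis $H_P$ contains $P'$, so the defining property of the least hypothesis gives $H_{P'} \subseteq H_P$. Since $\e$ is an E-capacity, this yields $\e(H_{P'}) \geq \e(H_P)$. The infimum is therefore at least $\e(H_P)$. It is also attained at $P' = P$, because $P \in H_P$. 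Hence $\overline{\e}(H_P) = \e(H_P)$.

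For uniqueness, let $\e'$ be any E-measure with $\e'(H_P) = \e(H_P)$ for all $P \in \P$. Take any $H \in \H$. By Lemma \ref{lem:canonical_cover} we have $H = \bigcup_{P \in H} H_P$. Applying the closure axiom to the family $\S = \{H_P : P \in H\}$ gives
\begin{align*}
    \e'(H) = \inf_{P \in H} \e'(H_P) = \inf_{P \in H} \e(H_P) = \overline{\e}(H),
\end{align*}
where the last equality is again Theorem \ref{thm:closure_least_true_hypothesis}. This covers the case $H = \emptyset$ as well: the infimum over an empty index set is $\infty$ on both sides.

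There is no real obstacle here. The only point needing care is the inclusion $H_{P'} \subseteq H_P$ for $P' \in H_P$, which comes straight from the definition of the least hypothesis of $P'$. One could alternatively note that an E-measure is an E-capacity and apply Theorem \ref{thm:closure_least_true_hypothesis} together with Theorem \ref{thm:smallest_dominating_E-measure} to $\e'$ directly, but the canonical-cover argument above is more transparent.
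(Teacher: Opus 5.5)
Your proposal is correct and follows essentially the same argument as the paper's proof. Both prove agreement on the least hypotheses from the closure formula of Theorem \ref{thm:closure_least_true_hypothesis}, using leastness and antitonicity with the infimum attained at $P' = P$. Both prove uniqueness from the canonical cover of Lemma \ref{lem:canonical_cover} together with the closure axiom.
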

        
    \section{E-kernels}
        We now move to informing our E-measures by data.
        For this purpose, we return to our sample space $\X$ equipped with a $\sigma$-algebra $\Sigma$.

        \subsection{E-variables}
            Most attention in the recent e-value literature has gone to measuring evidence against a single hypothesis using an \emph{E-variable}.
    
            \begin{dfn}[E-variable]
                A $\Sigma$-measurable map $\e(H \mid \cdot) : \mathcal{X} \to [0, \infty]$ is called an E-variable for $H$.
            \end{dfn}
    
            \begin{dfn}[Valid E-variable]
                An E-variable $x \mapsto \e(H \mid x) $ is valid if $\Ex^P[\e(H \mid X)] \leq 1$, for every $P \in H$.
            \end{dfn}
    
            \begin{rmk}[Separating E-variable and validity]
                We follow the convention from testing here, where a test is defined independently from its validity.
                This goes against the convention in the E-value literature, where the word `E-variable' is often reserved for what we call a `valid E-variable'.
            \end{rmk}

        \subsection{E-kernels}
            We introduce the E-kernel, which generalizes the E-variable to a hypothesis class $\H$.
            This can be interpreted as the E-analogue of a kernel for measures.
            
            \begin{dfn}[E-kernel]
                $\e : \H \times \X \to [0, \infty]$ is an E-kernel if both
                \begin{itemize}
                    \item The map $x \mapsto \e(H \mid x)$ is $\Sigma$-measurable for every $H \in \H$,
                    \item The map $H \mapsto \e(H \mid x)$ is an E-measure for every $x \in \mathcal{X}$.
                \end{itemize}
            \end{dfn}

            \begin{dfn}[Hypothesis-wise validity]
                We say that an E-kernel is (hypothesis-wise) valid if $x \mapsto \e(H \mid x)$ is a valid E-variable, for every $H \in \H^\emptyset$.
            \end{dfn}

            \begin{rmk}
                We can of course analogously define E-function kernels and E-capacity kernels, and refer to E-kernels as E-measure kernels.
                We distinguish between these when important, but use `E-kernel' as an umbrella term.
            \end{rmk}
            
            \begin{rmk}[\citet{grunwald2023posterior}]
                The `E-posterior' of \citet{grunwald2023posterior} corresponds to a hypothesis-wise valid E-function kernel (density) for $\H = 2^\P$, on the p-value scale $p = 1/\e$.
                Our E-kernel can be viewed as a generalization of this concept to arbitrary hypothesis classes.
                Moreover, we prefer to reserve the name `E-posterior' for an E-kernel relative to a prior E-measure; see Section \ref{sec:E-posterior}.
            \end{rmk}

        \subsection{Closure under hypothesis-wise validity}
            In Theorem \ref{thm:closure_pointwise_validity}, we present a first application of Theorem \ref{thm:closure_least_true_hypothesis} for (hypothesis-wise) valid E-kernels.
            It shows that if $(\P, \H)$ is intersection-closed then every non-dominated valid E-capacity kernel equals its closure and is therefore an E-measure kernel.
            Assuming we prefer more evidence, this implies that it suffices to consider E-measures among the class of E-capacities.
    
            \begin{thm}[Closure under validity]\label{thm:closure_pointwise_validity}
                Suppose that $(\P, \H)$ is intersection-closed, with an at most countable collection $\{H_P : P \in \P\}$ of least hypotheses.
                If $\e$ is an E-capacity kernel, then its closure $\overline{\e}$ is an E-measure kernel that dominates $\e$.
                $\e$ is valid if and only if $\overline{\e}$ is valid.
            \end{thm}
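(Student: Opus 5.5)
The plan is to apply Theorem \ref{thm:closure_least_true_hypothesis} pointwise in $x$. For each fixed $x \in \X$, the map $H \mapsto \e(H \mid x)$ is an E-capacity. Since $(\P, \H)$ is intersection-closed, its closure is
\begin{align*}
    \overline{\e}(H \mid x) = \inf_{P \in H} \e(H_P \mid x), \quad H \in \H.
\end{align*}
By Proposition \ref{prp:induced_E-measure}, $H \mapsto \overline{\e}(H \mid x)$ is an E-measure for every $x$. By Theorem \ref{thm:smallest_dominating_E-measure}, it dominates $H \mapsto \e(H \mid x)$.

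It remains to verify the kernel property, namely $\Sigma$-measurability of $x \mapsto \overline{\e}(H \mid x)$. This is where the countability assumption enters. The infimum over $P \in H$ only depends on the set $\{H_P : P \in H\}$, which is a subfamily of the at most countable collection of least hypotheses. Hence $\overline{\e}(H \mid \cdot)$ is an infimum of at most countably many $\Sigma$-measurable $[0,\infty]$-valued functions $x \mapsto \e(H_P \mid x)$, and is therefore measurable. For $H = \emptyset$ the infimum is empty, giving the constant $\infty$, which is also measurable. This step is the main technical point; without countability the infimum over an uncountable family could fail to be measurable.

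For validity, the direction from $\overline{\e}$ valid to $\e$ valid is immediate from domination and monotonicity of expectation: $\Ex^P[\e(H \mid X)] \leq \Ex^P[\overline{\e}(H \mid X)] \leq 1$ for $P \in H$. For the converse, suppose $\e$ is valid, and fix $H \in \H^\emptyset$ and $Q \in H$. Then
\begin{align*}
    \overline{\e}(H \mid x) = \inf_{P \in H} \e(H_P \mid x) \leq \e(H_Q \mid x)
\end{align*}
pointwise. Moreover $Q \in H_Q$ and $H_Q \neq \emptyset$, so validity of $\e$ at $H_Q$ gives $\Ex^Q[\e(H_Q \mid X)] \leq 1$. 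Hence $\Ex^Q[\overline{\e}(H \mid X)] \leq 1$. Since $Q \in H$ was arbitrary, $\overline{\e}(H \mid \cdot)$ is a valid E-variable.

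This is the key observation: validity only needs to hold at each $Q$ against the least hypothesis containing it. Apart from this, the proof is bookkeeping.
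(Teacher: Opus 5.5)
Your proposal is correct and follows the paper's proof essentially step by step. It closes pointwise in $x$ via Proposition \ref{prp:induced_E-measure} and Theorem \ref{thm:smallest_dominating_E-measure}, gets measurability from the countable-infimum formula of Theorem \ref{thm:closure_least_true_hypothesis}, and proves validity by bounding $\overline{\e}(H \mid x) \leq \e(H_Q \mid x)$, reading this bound directly off the infimum where the paper instead routes it through antitonicity of $\overline{\e}$ and Corollary \ref{cor:closure_least_true}.
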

            \begin{proof}
                First, fix $x \in \X$.
                Since $H \mapsto \e(H \mid x)$ is an E-capacity, Proposition \ref{prp:induced_E-measure} implies that its closure is an E-measure.
                Moreover, Theorem \ref{thm:smallest_dominating_E-measure} implies that it dominates $\e$: $\overline{\e}(H \mid x) \geq \e(H \mid x)$, for every $H \in \H$.

                Now, fix $H \in \H^\emptyset$.
                By Theorem \ref{thm:closure_least_true_hypothesis}, $\overline{\e}(H \mid x) = \inf_{P \in H} \e(H_P \mid x)$.
                As $\{H_P : P \in \P\}$ is at most countable, this is an infimum over an at most countable family.
                As a consequence, $\Sigma$-measurability of $x \mapsto \e(H_P \mid x)$ implies that $x \mapsto \overline{\e}(H \mid x)$ is measurable.

                It remains to prove the validity claim.
                For every $P \in H$, we have $\overline{\e}(H \mid x) \leq \overline{\e}(H_P \mid x)$, since $\overline{\e}$ is an E-measure and so an E-capacity.
                By Corollary \ref{cor:closure_least_true}, $\overline{\e}(H_P \mid x) = \e(H_P \mid x)$.
                Hence, if $\e$ is valid then $\overline{\e}$ is valid:
                \begin{align*}
                    \Ex^{P}[\overline{\e}(H \mid X)]
                        \leq \Ex^{P}[\overline{\e}(H_P \mid X)]
                        = \Ex^{P}[\e(H_P \mid X)]
                        \leq 1,\quad \textnormal{ for every } P \in H.
                \end{align*}
                Conversely, if $\overline{\e}$ is valid then $\e$ is valid, since $\overline{\e}$ dominates $\e$.
            \end{proof}

            \begin{rmk}[Other notions of validity]\label{rmk:closure_other_validity}
                The core idea underlying the result is that closure leaves the evidence on the least hypotheses unchanged; see Corollary \ref{cor:closure_least_true}.
                As a consequence, the result extends to any notion of validity of an E-capacity $\e$ that is completely determined by the evidence $\e(H_P)$ against the least hypotheses $H_P$, $P \in \P$.
                Hypothesis-wise validity is one such notion: for an E-capacity kernel $\e$, validity for all least hypotheses $H_P$ implies validity for every $H \in \H^\emptyset$, since $\e(H) \leq \e(H_P)$ for every $P \in H$.
            \end{rmk}
    
            \begin{rmk}[At-most-countability]
                In Theorem \ref{thm:closure_pointwise_validity}, the at-most-countable assumption on the collection of least hypotheses is used only to ensure that the closure $x \mapsto \overline{\e}(H \mid x) = \inf_{P \in H} \e(H_P \mid x)$ is $\Sigma$-measurable.
                Indeed, for every $c > 0$, we have $\{x : \overline{\e}(H \mid x) \geq c\} = \bigcap_{P \in H} \{x : \e(H_P \mid x) \geq c\}$.
                Countability then suffices, because a $\sigma$-algebra is closed under countable intersections.
                We stress that at-most-countability of the collection of least hypotheses $\{H_P : P \in \P\}$ does not imply that $\H$ or $\P$ is at most countable.

                At-most countability can of course be replaced by the plain assumption that $\overline{\e}$ is measurable.
            \end{rmk}
            
        \subsection{Merging valid E-kernels}
            Valid E-capacity kernels inherit the desirable merging property of E-variables.

            \begin{prp}[Merging]\label{prp:merging}
                An at-most-countable convex combination of valid E-capacity kernels is a valid E-capacity kernel.
            \end{prp}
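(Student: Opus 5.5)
Let $(\e_k)_{k \in K}$ be valid E-capacity kernels indexed by an at most countable set $K$, and let $(w_k)_{k \in K}$ be non-negative weights with $\sum_{k} w_k = 1$. Define
\begin{align*}
    \e(H \mid x) := \sum_{k \in K} w_k \, \e_k(H \mid x), \quad H \in \H,\ x \in \X.
\end{align*}
We must show that $\e$ is an E-capacity kernel and that it is valid. We check the defining properties one at a time: first the E-function property, then antitonicity, then measurability, then validity.

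\textbf{Impossibility.} For each $x$ we have $\e_k(\emptyset \mid x) = \infty$ for every $k$. At least one weight $w_k$ is strictly positive because the weights sum to one. Hence $w_k \e_k(\emptyset \mid x) = \infty$ for that $k$. All summands are non-negative, so $\e(\emptyset \mid x) = \infty$. Summands with $w_k = 0$ contribute $0 \times \infty = 0$ under the paper's conventions, so they cause no trouble.

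\textbf{Implication.} Suppose $H \subseteq H'$. Then $\e_k(H' \mid x) \leq \e_k(H \mid x)$ for every $k$. Multiplying by $w_k \geq 0$ and summing non-negative terms in $[0,\infty]$ preserves the inequality, so $\e(H' \mid x) \leq \e(H \mid x)$. Thus $H \mapsto \e(H \mid x)$ is an E-capacity for every $x$.

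\textbf{Measurability.} Fix $H$. Each map $x \mapsto w_k \e_k(H \mid x)$ is $\Sigma$-measurable. A countable sum of non-negative extended-real measurable functions is the pointwise limit of its finite partial sums, which are measurable. Hence $x \mapsto \e(H \mid x)$ is $\Sigma$-measurable.

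\textbf{Validity.} Fix $H \in \H^\emptyset$ and $P \in H$. By the monotone convergence theorem (equivalently, Tonelli for a countable sum of non-negative functions), we may interchange expectation and summation:
\begin{align*}
    \Ex^P[\e(H \mid X)] = \sum_{k \in K} w_k \Ex^P[\e_k(H \mid X)] \leq \sum_{k \in K} w_k = 1.
\end{align*}

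The only step needing care is the countable case. There, measurability and the interchange of expectation and sum both rest on non-negativity and monotone convergence. The handling of infinite values is also delicate and follows from the $0 \times \infty = 0$ convention together with non-negativity. Everything else is immediate.

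A brief remark, not needed for the proof: the closure property does not in general survive averaging, because a weighted sum of infima is not the infimum of the weighted sums. This is why the statement concerns E-capacity kernels rather than E-measure kernels. If an E-measure kernel is wanted, Theorem \ref{thm:closure_pointwise_validity} can be applied afterwards.
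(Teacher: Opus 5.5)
Your proof is correct and takes the same route as the paper: you check impossibility and antitonicity pointwise in $x$, measurability of a countable non-negative sum, and validity by interchanging expectation with the countable weighted sum. Your extra care (a strictly positive weight, the $0 \times \infty = 0$ convention, and monotone convergence) only makes explicit details the paper leaves implicit.
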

            
            \begin{rmk}[Merge then close]
                A weighted average of valid E-measure kernels is not guaranteed to be an E-measure kernel: it may only be an E-capacity kernel.
                Under the assumptions of Theorem \ref{thm:closure_pointwise_validity}, we can close the weighted average back into a valid E-measure kernel.
            \end{rmk}

        \subsection{Example: confidence sets}\label{sec:confidence_sets}
            A simple example of a valid E-kernel is a valid confidence set.
            Following Example \ref{exm:test_outcomes}, suppose that $x \mapsto \e(H \mid x)$ is $\{0, 1/\alpha\}$-valued (a level-$\alpha$ test), for every $H \in \H^\emptyset$.
            Collecting the hypotheses for which these tests do not reject produces a confidence set over $\H$:
            \begin{align*}
                C_\alpha(x) := \{H \in \mathcal{H}: \e(H \mid x) < 1/\alpha\},
            \end{align*}
            so that $\e(H \mid x) = 1/\alpha \ind{H \not\in C_\alpha(x)}$ for $H \in \H^\emptyset$.
            If the E-kernel is valid,
            \begin{align*}
                \Ex^P\left[\frac{1}{\alpha} \ind{H \not\in C_\alpha}\right] \leq 1, \quad \textnormal{ for every } P \in H, H \in \H^\emptyset.
            \end{align*}
            Rewriting this shows that $x \mapsto C_\alpha(x)$ is a valid level $\alpha$ confidence set:
            \begin{align*}
               P(H \in C_\alpha) \geq 1 - \alpha,\quad \textnormal{ for every } P \in H, H \in \H^\emptyset.
            \end{align*}

            \begin{rmk}[Confidence set for $H$]
                An interpretation of such a confidence set over hypotheses is that every true hypothesis $H \ni P^*$ in $\H$ is contained in $C_\alpha$, with high probability:
                \begin{align*}
                    P^*(H \in C_\alpha) \geq 1 - \alpha.
                \end{align*}
                In certain contexts, such a confidence set over hypotheses is called a `model confidence set'.
            \end{rmk}

            \begin{rmk}[Confidence set for $P^*$]\label{rmk:confidence_set_LTH}
                If $(\P, \H)$ is intersection-closed, we can construct confidence sets on $\P$:
                \begin{align*}
                    C_\alpha^\P(x)
                        := \{P \in \P : \e(H_P \mid x) < 1/\alpha\},
                \end{align*}
                which then satisfies $P(P \in C_\alpha^\P) \geq 1 - \alpha$, for every $P \in \P$.
                That is, the true probability $P^*$ is contained in $C_\alpha^\P$ with high probability.
                This is a special case of the extension of $\e$ from $\H$ to $2^\P$, discussed in Appendix \ref{appn:canonical_extension}.
            \end{rmk}

        \subsection{Equivalence to post-hoc confidence sets}\label{sec:post-hoc_confidence_sets}
            A deeper link between confidence sets and E-kernels is presented in Proposition \ref{prp:post-hoc}: a valid E-kernel is equivalent to a post-hoc valid collection of confidence sets.
            This relies on ideas on post-hoc $\alpha$ hypothesis testing developed by \citet{grunwald2024beyond} and \citet{koning2023post}.
            The result is effectively a reformulation of Theorem 2 in \citet{koning2023post} in terms of confidence sets.
            
            \begin{prp}[Post-hoc validity]\label{prp:post-hoc}
                $\e$ is a valid E-kernel if and only if for every data-dependent level $\widetilde{\alpha} \in [0, \infty]$,
                \begin{align}\label{ineq:post-hoc_valid_confidence_set}
                    \Ex^P\left[\frac{P(H \not\in C_{\widetilde{\alpha}} \mid \widetilde{\alpha})}{\widetilde{\alpha}}\right] \leq 1,\quad \textnormal{ for every } P \in H, H \in \H^\emptyset.
                \end{align}
            \end{prp}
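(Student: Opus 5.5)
Since \eqref{ineq:post-hoc_valid_confidence_set} only involves a single pair $(P, H)$ at a time, I will fix $H \in \H^\emptyset$ and $P \in H$ throughout and show that validity of $x \mapsto \e(H \mid x)$ under $P$ is equivalent to the post-hoc inequality for that pair. Here $C_{\widetilde{\alpha}}(x) = \{H' \in \H : \e(H' \mid x) < 1/\widetilde{\alpha}(x)\}$, so $H \notin C_{\widetilde{\alpha}}$ if and only if $\e(H \mid X) \geq 1/\widetilde{\alpha}$. The quantity inside the expectation is therefore
\begin{align*}
\frac{P(\e(H \mid X) \widetilde{\alpha} \geq 1 \mid \widetilde{\alpha})}{\widetilde{\alpha}},
\end{align*}
and the claim becomes the post-hoc Type-I error characterization of E-values from \citet{koning2023post} (Theorem 2) applied to the single E-variable $\e(H \mid \cdot)$. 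I will reprove it directly rather than only cite it, using the paper's infinity conventions for $\widetilde{\alpha} \in \{0, \infty\}$.

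\textbf{Validity $\Rightarrow$ post-hoc.} I will use the pointwise bound $\ind{\e \widetilde{\alpha} \geq 1}/\widetilde{\alpha} \leq \e$. If $\e \geq 1/\widetilde{\alpha}$, the left side is $1/\widetilde{\alpha} \leq \e$. Otherwise the left side is $0$. For $\widetilde{\alpha} = 0$ the indicator is $\ind{\e = \infty}$, and $0 \cdot \infty = 0$ together with the $c/0$ conventions need a short check. Then, by the tower property conditioning on $\widetilde{\alpha}$ (which is $\sigma(\widetilde{\alpha})$-measurable, so $1/\widetilde{\alpha}$ can be pulled inside the conditional probability),
\begin{align*}
\Ex^P\left[\frac{P(H \notin C_{\widetilde{\alpha}} \mid \widetilde{\alpha})}{\widetilde{\alpha}}\right] = \Ex^P\left[\frac{\ind{H \notin C_{\widetilde{\alpha}}}}{\widetilde{\alpha}}\right] \leq \Ex^P[\e(H \mid X)] \leq 1.
\end{align*}

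\textbf{Post-hoc $\Rightarrow$ validity.} I will choose the data-dependent level $\widetilde{\alpha} = 1/\e(H \mid X)$. With this choice $H \notin C_{\widetilde{\alpha}}$ always holds, since $\e \geq \e$, so the conditional probability equals $1$. The left side of \eqref{ineq:post-hoc_valid_confidence_set} then reduces to $\Ex^P[1/\widetilde{\alpha}] = \Ex^P[\e(H \mid X)]$, which gives validity. The subtle point is that $\widetilde{\alpha}$ may depend on $H$. This is permitted because the quantifier over $\widetilde{\alpha}$ in \eqref{ineq:post-hoc_valid_confidence_set} is allowed to be chosen separately for each $(P, H)$; I read the statement that way, and otherwise I would pass to a sup over levels.

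\textbf{Main obstacle.} I expect the main difficulty to be the conventions rather than any real inequality. The conditional probability given $\widetilde{\alpha}$ must be well defined, so $\widetilde{\alpha}$ has to be $\Sigma$-measurable. The edge cases $\e \in \{0, \infty\}$ and $\widetilde{\alpha} \in \{0, \infty\}$ must be consistent with $c/0 = \infty$, $0/0 = 0$ and $0 \times \infty = 0$. In particular, when $\widetilde{\alpha} = 0$ on the event $\{\e = \infty\}$, a valid $\e$ forces $P(\e = \infty) = 0$, so the integrand is $\infty$ only on a $P$-null set and contributes $0$ to the expectation. Finally, whether $\e$ is an E-measure plays no role: the equivalence is hypothesis-wise, so the kernel structure is only used to make each $\e(H \mid \cdot)$ measurable.
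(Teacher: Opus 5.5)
Your proposal is correct and follows the paper's proof essentially step for step. The forward direction uses the pointwise bound $\ind{\e(H \mid X) \geq 1/\widetilde{\alpha}}/\widetilde{\alpha} \leq \e(H \mid X)$ together with the tower property, and the converse uses the choice $\widetilde{\alpha} = 1/\e(H \mid X)$.

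Two of your side remarks are unnecessary but harmless. Your worry about $\widetilde{\alpha}$ depending on $H$ is resolved automatically, since the inequality must hold for every $\widetilde{\alpha}$ and every $(P,H)$ simultaneously. Your null-set argument for $\widetilde{\alpha}=0$ is not needed, because the pointwise bound already holds there as $\infty \leq \infty$.
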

            
            To relate this to classical confidence sets, we can rewrite classical validity as: for every \emph{data-independent} $\alpha \geq 0$,
            \begin{align}\label{ineq:valid_confidence_set}
                \frac{P(H \not\in C_\alpha)}{\alpha} \leq 1,\quad \textnormal{ for every } P \in H, H \in \H^\emptyset.
            \end{align}
            Comparing the two, \eqref{ineq:post-hoc_valid_confidence_set} can be interpreted as requiring \eqref{ineq:valid_confidence_set} to hold in expectation over every \emph{data-dependent} level.

            \begin{rmk}
                The relationship between confidence sets, E-function kernels (under the name `fuzzy prediction sets') and post-hoc confidence sets is central to \citet{koning2025fuzzy}, in the context of prediction sets; see Section \ref{sec:prediction}.
                Asymptotically valid post-hoc confidence sets are studied by \citet{chugg2026post}.
            \end{rmk}

    \section{From likelihood principle to E-principle}\label{sec:likelihood_principle}
        We propose to use an E-measure (kernel) to present all the relevant evidence, where the relevance of evidence is described by the hypothesis class under consideration.
        We call this the \emph{E-principle}, extending the idea of the likelihood principle to E-measures.
        
        The likelihood principle loosely states that all the relevant evidence about the true $P^*$ is contained in the likelihood function.
        Using the language of E-measures, we can turn this into a precise statement about $\H$-measurability.
        
        In particular, the likelihood itself corresponds to an E-measure kernel for the hypothesis space $(\P, 2^\P)$.
        Indeed, suppose that $\P$ admits a dominating measure $\lambda$.
        Then, the reciprocal of the likelihood $\e$ is the E-kernel density $\e(\{P\} \mid x) = (dP/d\lambda(x))^{-1}$ for the hypothesis class $\H = 2^\P$.
        This E-kernel density can be extended to an E-measure kernel on $\H$ by closure.
        If $\lambda$ is a \emph{probability} measure, then $\e$ is a valid E-kernel.
        
        This yields a formulation of the likelihood principle in terms of E-measures: the likelihood is an E-kernel for the hypothesis class $2^\P$, which is the hypothesis class that makes the map $P \mapsto P$ $\H$-measurable.
        That is, every claim (hypothesis) about the true $P^*$ is $\H$-measurable.

        From this perspective, measurability with respect to a hypothesis class $\H$ can be viewed as a generalization of the likelihood principle: all the relevant evidence is captured by an E-kernel for an appropriately chosen hypothesis class $\H$.
        This also resolves the problem that the classical likelihood principle is hard to extend to non-parametric setting in which $\P$ often does not admit a dominating measure.
        In such settings we can replace the likelihood function by another E-kernel for $2^\P$, or pass to a less granular hypothesis class $\H \subseteq 2^\P$.

    \section{Evidence-based decisions}\label{sec:decisions}
        In this section, we show how E-measures can be used in decision making.

        We consider a decision maker who faces the consequences of making a decision $d \in \D$ under uncertainty about the true $P^*$.
        For a given decision $d$ and potentially true $P \in \P$ the consequences are described by the consequence function $L_P : \D \to \C$, where $\C$ is a space of consequences equipped with a preorder $\succsim$.
        For the sake of interpretation, we consider `large' consequences to be `worse'.
        We denote the collection of all consequence functions by $\L := \{L_P : P \in \P\}$.

        A concrete example of a consequence space is a numerical loss space $[0, \infty]$, equipped with the natural order.
        We return to this special case in Section \ref{sec:grunwald}.

        \subsection{The E-principle in decision making}
            If the decision maker were to observe the true $P^*$, then they could weigh the decisions over their consequences by browsing the true consequence function $L_{P^*}$ over the decisions.
            As the truth is not observed, the best we can do is to present the decision maker with relevant evidence about $P^*$ for the decision problem.

            In theory, we could present the evidence in the shape of an E-measure for the entire power set hypothesis class $\H = 2^\P$.
            However, not all of this is relevant to the decision problem.
            Indeed, the real problem is not that the truth $P^*$ is unknown, but that the true consequence function $L_{P^*}$ is not known.
            As a result, we need only distinguish between elements of $\P$ through their consequences for the decision maker.

            Following the E-principle, the idea is to couple the `relevant information' for a decision problem to $\H$-measurability of the function $P \mapsto L_P$.
            For example, if we wish to present evidence against claims about the true consequence function $L_{P^*}$, the idea is that it suffices that all hypotheses of the form
            \begin{align*}
                \{P \in \P : L_P = \ell\}, \quad \ell \in \L
            \end{align*}
            are $\H$-measurable: contained in $\H$.
            That is, it suffices for the function $P \mapsto L_P$ to be $\H$-measurable, as a function from our hypothesis class $(\P, \H)$ to the space of consequence functions $(\L, 2^\L)$.

            More generally, the kind of evidence we can provide about the consequences of our decisions is only limited by the granularity of the hypothesis class $\H$ for which we have an E-measure.

        \subsection{Hypothesis class for consequence bounds}
            To showcase our ideas, we focus on the problem of obtaining bounds on the consequences of a given decision.
            Here, we derive the hypothesis class $\H_L$ that makes all the lower-bound claims about the true consequences $L_{P^*}$ measurable.
            
            For a given decision $d$, bounds on its consequence $L_{P^*}(d)$ are captured by the hypotheses of the form
            \begin{align*}
                H_{d, c}
                    := \{P \in \P : L_P(d) \succsim c\}, \quad c \in \C.
            \end{align*}
            Here, $P^* \in H_{d, c}$ expresses that the true consequence $L_{P^*}(d)$ of $d$ is at least $c$.

            As we intend to compare across decisions, we consider a hypothesis class that contains all hypotheses $H_{d, c}$ across decisions $d \in \D$ and benchmarks $c \in \C$.
            Proposition \ref{prp:smallest_relevant} shows that the smallest such intersection-closed hypothesis class is $\H_L$, which we define as the union-closure of the hypotheses
            \begin{align*}
                H_\ell
                    := \{P' \in \P : L_{P'}(d) \succsim \ell(d), \textnormal{ for every } d \in \D\}, \quad \ell \in \L.
            \end{align*}
            For a given benchmark consequence function $\ell$, $P^* \in H_\ell$ expresses that the true consequence function $L_{P^*}$ is uniformly worse than $\ell$.
            
            \begin{prp}\label{prp:smallest_relevant}
                $\H_L$ is the smallest intersection-closed hypothesis class that contains every hypothesis $H_{d, c}$, $d \in \D$, $c \in \C$.
            \end{prp}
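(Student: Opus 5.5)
The plan is to recognise $\H_L$ as a preorder-induced hypothesis class, so that Proposition \ref{prp:order_least_true} supplies intersection-closure. The proof then splits into three parts: $\H_L$ is intersection-closed, it contains every $H_{d,c}$, and it is contained in any intersection-closed class containing every $H_{d,c}$.

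\emph{Step 1 (intersection-closure).} Define a relation on $\P$ by $P \precsim P'$ if and only if $L_{P'}(d) \succsim L_P(d)$ for every $d \in \D$. This is a preorder because $\succsim$ is a preorder on $\C$: reflexivity and transitivity hold coordinatewise in $d$. Its principal upper sets are
\begin{align*}
H_\precsim(P) = \{P' \in \P : L_{P'}(d) \succsim L_P(d) \textnormal{ for every } d \in \D\} = H_{L_P}.
\end{align*}
Every $\ell \in \L$ is of the form $L_P$ for some $P \in \P$, since $\L = \{L_P : P \in \P\}$. Hence the generators $\{H_\ell : \ell \in \L\}$ of $\H_L$ are exactly the principal upper sets, and $\H_L = \H_\precsim$. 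Proposition \ref{prp:order_least_true} then gives that $(\P, \H_L)$ is intersection-closed, with least hypotheses $H_P = H_{L_P}$.

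\emph{Step 2 ($H_{d,c} \in \H_L$).} I will show that
\begin{align*}
H_{d,c} = \bigcup_{P \in H_{d,c}} H_{L_P}.
\end{align*}
For the inclusion $\subseteq$, reflexivity of $\succsim$ gives $P \in H_{L_P}$. For $\supseteq$, take $P \in H_{d,c}$ and $P' \in H_{L_P}$. Then $L_{P'}(d) \succsim L_P(d) \succsim c$, so $P' \in H_{d,c}$ by transitivity. Since $\H_L$ is closed under unions, this shows $H_{d,c} \in \H_L$.

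\emph{Step 3 (minimality).} Let $\H'$ be any intersection-closed hypothesis class containing every $H_{d,c}$. For each $\ell \in \L$ we have
\begin{align*}
H_\ell = \bigcap_{d \in \D} H_{d, \ell(d)},
\end{align*}
which lies in $\H'$ because $\H'$ is closed under arbitrary intersections. Being union-closed, $\H'$ then contains the union-closure of $\{H_\ell : \ell \in \L\}$, namely $\H_L$.

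None of these steps is hard. The one point needing care is Step 1: the benchmarks $\ell$ range only over $\L$ and not over arbitrary functions $\D \to \C$. This restriction is exactly what makes each $H_\ell$ a principal upper set rather than a general intersection of the sets $H_{d,c}$. It also gives $\P \in \H_L$, as $\P = \bigcup_P H_{L_P}$, which the definition of intersection-closure requires.
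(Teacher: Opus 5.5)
Your proof is correct. Steps 1 and 2 follow the paper. It likewise identifies $\H_L = \H_{\succsim_L}$ via Remark~\ref{rmk:HL_least_hypothesis} and Proposition~\ref{prp:order_least_true}, and obtains $H_{d,c} \in \H_L$ by checking that $H_{d,c}$ is a $\succsim_L$-upper set. Your decomposition $H_{d,c} = \bigcup_{P \in H_{d,c}} H_{L_P}$ just makes explicit that an upper set is the union of the principal upper sets it contains.

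Step 3 is where you genuinely differ. The paper applies Proposition~\ref{prp:order_least_true} again to write the competing class as $\widetilde{\H} = \H_{\succsim}$. It then shows $P' \succsim P \Rightarrow P' \succsim_L P$, because each $H_{d, L_P(d)}$ is a $\succsim$-upper set containing $P$, and concludes that every $\succsim_L$-upper set is a $\succsim$-upper set. You instead note $H_\ell = \bigcap_{d \in \D} H_{d,\ell(d)}$ and use intersection- and union-closure directly. Your route is shorter and more elementary. It needs no order-theoretic representation of $\H'$, and it uses only the specific intersections $\bigcap_{d \in \D} H_{d,\ell(d)}$, $\ell \in \L$. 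The paper's route stays in the preorder language of Section~\ref{sec:decisions}, and it additionally shows that any such class is induced by a preorder contained in $\succsim_L$.

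Your closing remark slightly overstates the role of the restriction $\ell \in \L$. That restriction is what makes the generators exactly the principal upper sets. However, $\H_L$ would be unchanged if $\ell$ ranged over all maps $\D \to \C$: each such $H_\ell$ is an intersection of $\succsim_L$-upper sets, hence again one.
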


            The hypothesis class $\H_L$ is not rich enough for $\H_L$-measurability of $P \mapsto L_P$.
            Instead, it makes $P \mapsto L_P$ `order-measurable'; $P \mapsto L_P$ is measurable only with respect to the hypothesis class of upper sets induced by uniform dominance on $\L$.
            
            More precisely, define the preorder $\ell \succsim_\L \ell' \iff \ell(d) \succsim \ell'(d)$ for every $d \in \D$, and let $\H_{\succsim_\L}$ denote the hypothesis class on $\L$ (not $\P$!) induced by this preorder, as in Section \ref{sec:preorder_induced}.
            The map $P \mapsto L_P$ is then $(\H_L, \H_{\succsim_{\L}})$-measurable, yet not necessarily $(\H_L, 2^\L)$-measurable.

            \begin{dfn}[Order-measurable]\label{dfn:order-measurable}
                We say that $P \mapsto L_P$ is $\H$-order-measurable if $P \mapsto L_P$ is $(\H, \H_{\succsim_{\L}})$-measurable.
            \end{dfn}

            \begin{lem}\label{lem:order_measurable}
                $P \mapsto L_P$ is $\H$-order-measurable if and only if $\H_L \subseteq \H$.
            \end{lem}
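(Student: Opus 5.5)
We unpack $(\H, \H_{\succsim_\L})$-measurability and compare preimages with the generators $H_\ell$ of $\H_L$. By definition, $\H_{\succsim_\L}$ is the union-closure of the principal upper sets
\begin{align*}
    U_\ell := \{\ell' \in \L : \ell \succsim_\L \ell'\}^{\mathrm{rev}} = \{\ell' \in \L : \ell' \succsim_\L \ell\}, \quad \ell \in \L.
\end{align*}
Here we take the preorder on $\L$ oriented so that upper sets consist of functions uniformly worse than $\ell$, matching the orientation of $H_\ell$. For the map $f : P \mapsto L_P$ we then compute
\begin{align*}
    f^{-1}(U_\ell) = \{P \in \P : L_P \succsim_\L \ell\} = \{P : L_P(d) \succsim \ell(d) \ \forall d\} = H_\ell.
\end{align*}
So the preimages of the generators of $\H_{\succsim_\L}$ are exactly the generators $H_\ell$ of $\H_L$.

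The key structural fact is that preimages commute with arbitrary unions: $f^{-1}(\bigcup_i U_i) = \bigcup_i f^{-1}(U_i)$. Every element of $\H_{\succsim_\L}$ is a union of sets $U_\ell$; this includes the empty union, whose preimage is $\emptyset \in \H$. Hence
\begin{align*}
    \{f^{-1}(U) : U \in \H_{\succsim_\L}\} = \Big\{\bigcup_{\ell \in S} H_\ell : S \subseteq \L\Big\} = \H_L.
\end{align*}

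($\Leftarrow$) If $\H_L \subseteq \H$, then every preimage $f^{-1}(U)$ with $U \in \H_{\succsim_\L}$ lies in $\H_L \subseteq \H$, so $f$ is order-measurable.

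($\Rightarrow$) If $f$ is order-measurable, then each $H_\ell = f^{-1}(U_\ell)$ belongs to $\H$. Since $\H$ is union-closed, it contains the union-closure of $\{H_\ell : \ell \in \L\}$, which is $\H_L$.

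The main obstacle is the orientation of the preorder, together with the fact that $\ell$ ranges only over $\L$ rather than over all functions $\D \to \C$. The definition "$\ell \succsim_\L \ell' \iff \ell(d) \succsim \ell'(d)$" combined with "principal upper set $\{\ell' : \ell \precsim \ell'\}$" must yield $\{\ell' : \ell' \succsim_\L \ell\}$. So one must confirm that $\precsim$ denotes the reverse of $\succsim_\L$, which is the reading consistent with $H_\ell$. Both index sets are $\L$, so the generators match one-to-one and no further issue arises.
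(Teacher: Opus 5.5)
Your proof is correct and follows the paper's argument: you show $f^{-1}(U_\ell) = H_\ell$ for the principal upper sets $U_\ell = \{\ell' \in \L : \ell' \succsim_\L \ell\}$, then use that preimages commute with arbitrary unions, with union-closure of $\H$ giving one direction and the fact that every set in $\H_{\succsim_\L}$ is a union of sets $U_\ell$ giving the other. The only difference is cosmetic: you record the slightly sharper identity that the family of all preimages is exactly $\H_L$, where the paper writes each upper set as $A = \bigcup_{\ell \in A} U_\ell$; you could also drop the superscript ``rev'' notation and simply define $U_\ell$ by the explicit set.
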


            \begin{rmk}[Least hypotheses]\label{rmk:HL_least_hypothesis}
                The hypothesis class $\H_L$ is induced by a preorder $\succsim_L$ on $\P$ defined by: $P' \succsim_L P$ if and only if $L_{P'}(d) \succsim L_P(d)$ for every $d \in \D$, because every $\ell \in \L$ corresponds to $L_P$ for some $P \in \P$.
                As a consequence, $(\P, \H_L)$ is intersection-closed with least hypotheses $H_{L_P}$, $P \in \P$, by Proposition \ref{prp:order_least_true}.
            \end{rmk}
            
        \subsection{Evidential consequence bounds}
            Theorem \ref{thm:E-consequence} shows that the validity of an E-kernel $\e$ for $\H_L$ implies an \emph{evidential bound} on the true consequence $L_{P^*}(d)$ uniformly across potentially data-informed decisions.
            To support the interpretation, we use the event-style notation
            \begin{align*}
                \e(L_{P^*}(d) \succsim \ell(d) \mid x)
                    =: \e(H_{d, \ell(d)} \mid x).
            \end{align*}
            For any data-informed decision $\widehat{d}$, the idea is that $\e(L_{P^*}(\widehat{d}) \succsim \ell(\widehat{d}) \mid x)$ expresses the evidence against the claim that the true consequence $L_{P^*}(\widehat{d})$ is at least as bad as $\ell(\widehat{d})$.
            
            \begin{thm}[Uniform E-consequence bound]\label{thm:E-consequence}
                Let $\e$ be a valid E-capacity kernel for $\H$.
                Suppose that $P \mapsto L_P$ is $\H$-order-measurable.
                Then, for every $\ell \in \L$ and $P \in H_\ell$,
                \begin{align*}
                    \Ex^P\left[\sup_{d \in \D} \e(L_{P^*}(d) \succsim \ell(d) \mid X)\right] \leq 1.
                \end{align*}
                Here, we assume the map $x \mapsto \sup_{d \in \D} \e(H_{d, \ell(d)} \mid x)$ is $\Sigma$-measurable.
            \end{thm}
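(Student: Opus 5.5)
The plan is to reduce the supremum over decisions to a single hypothesis, namely $H_\ell$, and then apply hypothesis-wise validity once. The key observation is that $H_\ell$ implies every $H_{d, \ell(d)}$. By definition,
\begin{align*}
    H_\ell = \{P' \in \P : L_{P'}(d) \succsim \ell(d) \textnormal{ for every } d \in \D\} = \bigcap_{d \in \D} H_{d, \ell(d)},
\end{align*}
so $H_\ell \subseteq H_{d, \ell(d)}$ for every $d \in \D$.

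Both $H_\ell$ and the sets $H_{d, \ell(d)}$ must belong to $\H$, since otherwise neither the E-capacity nor its antitonicity can be used. By Lemma \ref{lem:order_measurable}, $\H$-order-measurability gives $\H_L \subseteq \H$. The set $H_\ell$ is one of the generators of $\H_L$, so $H_\ell \in \H$. Proposition \ref{prp:smallest_relevant} shows that $\H_L$ contains every $H_{d, c}$, hence in particular every $H_{d, \ell(d)}$. Both hypotheses therefore lie in $\H$.

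Now fix $x \in \X$. Because $H \mapsto \e(H \mid x)$ is an E-capacity, the inclusion $H_\ell \subseteq H_{d, \ell(d)}$ gives $\e(H_{d, \ell(d)} \mid x) \leq \e(H_\ell \mid x)$ for every $d$. Taking the supremum over $d \in \D$ yields the pointwise bound
\begin{align*}
    \sup_{d \in \D} \e(H_{d, \ell(d)} \mid x) \leq \e(H_\ell \mid x), \quad x \in \X.
\end{align*}

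The integral is then bounded using the assumed $\Sigma$-measurability of the left-hand side and monotonicity of expectation. Fix $P \in H_\ell$. Since $P \in H_\ell$, the set $H_\ell$ is nonempty, so $H_\ell \in \H^\emptyset$ and hypothesis-wise validity gives $\Ex^P[\e(H_\ell \mid X)] \leq 1$. This establishes the claimed bound, which holds without any measurability issue for the supremum beyond the stated assumption.

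The main obstacle is the membership bookkeeping in the second step: the argument needs $H_\ell \in \H$, and not merely that each $H_{d,c}$ lies in $\H$. This is exactly what order-measurability supplies through Lemma \ref{lem:order_measurable}. Note that $\H$ itself need not be intersection-closed, since $H_\ell$ is obtained directly as a generator of $\H_L$ rather than as an intersection taken in $\H$.
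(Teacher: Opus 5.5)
Your proposal is correct and matches the paper's proof essentially step for step. You use Lemma \ref{lem:order_measurable} and Proposition \ref{prp:smallest_relevant} to place $H_\ell$ and every $H_{d,\ell(d)}$ in $\H$, apply antitonicity via $H_\ell \subseteq H_{d,\ell(d)}$, take the supremum over $d$, and conclude from validity at $H_\ell$; your explicit check that $H_\ell \in \H^\emptyset$ because $P \in H_\ell$ is a small point the paper leaves implicit.
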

            \begin{proof}
                By Lemma \ref{lem:order_measurable}, $\H$-order-measurability is equivalent to $\H_L \subseteq \H$.
                Hence, both $H_\ell$ and $H_{d, \ell(d)}$ are $\H$-measurable, by Proposition \ref{prp:smallest_relevant}, so that the E-kernel is well-defined on these hypotheses.
                
                Next, $H_\ell = \{P' : L_{P'}(d) \succsim \ell(d) \textnormal{ for every } d \in \D\} \subseteq \{P' : L_{P'}(d) \succsim \ell(d)\} = H_{d, \ell(d)}$.
                Hence, by the antitonicity of E-capacities, we obtain $\e(H_{d, \ell(d)} \mid x) \leq \e(H_\ell \mid x)$.
                Taking the supremum over $\D$ on both sides yields $\sup_{d \in \D} \e(H_{d, \ell(d)} \mid x) \leq \e(H_\ell \mid x)$, so that the result follows from the validity of $\e$.
            \end{proof}
            
            \begin{rmk}
                The $\Sigma$-measurability holds if $\{H_{d, \ell(d)} : d \in \D\}$ is countable.
            \end{rmk}
            
            \begin{rmk}
                To the best of our knowledge, evidential bounds are novel.
                Different notions of validity induce different types of evidential bounds.
                We explore such different notions of validity in Section \ref{sec:multiplicity} and Section \ref{sec:abstract_hypotheses}.
            \end{rmk}

        \subsection{Probabilistic and post-hoc consequence bounds}
            Evidential bounds generalize probabilistic bounds in the same way that E-values generalize binary hypothesis tests.
            To express these bounds, we define the confidence sets on consequences,
            \begin{align*}
                C_\alpha^d(x)
                    := \{c \in \C : \e(H_{d, c} \mid x) < 1 / \alpha\}, \quad d \in \D.
            \end{align*}
            
            Corollary \ref{cor:probability_loss_bound} shows that Theorem \ref{thm:E-consequence} specializes to a probability bound if we consider $\{0, 1/\alpha\}$-valued E-kernels.
            Corollary \ref{cor:post-hoc_probability_loss_bound} is equivalent to Theorem \ref{thm:E-consequence}, but expressed as a post-hoc probability bound, analogously to Proposition \ref{prp:post-hoc}.
            
            \begin{cor}[Probability consequence bound]\label{cor:probability_loss_bound}
                Let $\e$ be an $\{0, 1/\alpha\}$-valued valid E-capacity kernel for $\H$.
                Assume that $P \mapsto L_P$ is $\H$-order-measurable and fix $\alpha \in [0, \infty]$.
                For every $P \in H_\ell$ and $\ell \in \L$,
                \begin{align*}
                    P\left(\ell(d) \in C_\alpha^d(X), \textnormal{ for every } d \in \D\right) \geq 1 - \alpha,
                \end{align*}
                under the measurability assumption of Theorem \ref{thm:E-consequence}.
                In particular, this holds for $\ell(d) = L_P(d)$.
            \end{cor}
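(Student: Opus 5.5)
We derive Corollary \ref{cor:probability_loss_bound} from Theorem \ref{thm:E-consequence} by specializing to $\{0,1/\alpha\}$-valued E-kernels and reading the expectation bound as a probability bound. Fix $\ell \in \L$ and $P \in H_\ell$, and write
\begin{align*}
    S(x) := \sup_{d \in \D} \e(H_{d, \ell(d)} \mid x).
\end{align*}
Since each $\e(H_{d,\ell(d)} \mid x) \in \{0, 1/\alpha\}$ (note $H_{d,\ell(d)} \supseteq H_\ell \ni P$ is nonempty, so the value $\infty$ at $\emptyset$ never enters), the supremum $S(x)$ also lies in $\{0, 1/\alpha\}$. Hence $S(x) = \frac{1}{\alpha}\ind{\exists d : \e(H_{d,\ell(d)} \mid x) = 1/\alpha}$.

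The event in the statement, $\{\ell(d) \in C_\alpha^d(X) \text{ for every } d\}$, is by definition of $C_\alpha^d$ the event that $\e(H_{d,\ell(d)} \mid X) < 1/\alpha$ for every $d$. For $\{0,1/\alpha\}$-valued kernels this is exactly the event $\{S(X) = 0\}$. Its complement is $\{S(X) = 1/\alpha\}$, which is $\Sigma$-measurable by the assumed measurability of $S$.

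Theorem \ref{thm:E-consequence} now gives
\begin{align*}
    \tfrac{1}{\alpha} P\bigl(S(X) = 1/\alpha\bigr) = \Ex^P[S(X)] \leq 1,
\end{align*}
so the complement has probability at most $\alpha$. This yields the claim.

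The final sentence of the corollary follows by taking $\ell = L_P$. This is allowed since $L_P \in \L$, and $P \in H_{L_P}$ by reflexivity of the preorder $\succsim$ on $\C$.

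The main obstacle is the degenerate values of $\alpha$, since the infinity conventions must be handled with care.

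If $\alpha \geq 1$, the bound $1 - \alpha \leq 0$ is trivial.

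If $\alpha = 0$, then $1/\alpha = \infty$, and the kernel is $\{0,\infty\}$-valued. In this case validity forces $P(S(X) = \infty) = 0$, because an expectation of at most $1$ is impossible when an infinite value has positive probability. This gives probability $1$, as required. Strictly, the identity $\frac{1}{\alpha}P(\cdot) = \Ex^P[S]$ should be checked separately here, using $0 \times \infty = 0$, which holds since $P(S = \infty) = 0$.

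Once these edge cases are dispatched, the remaining argument is simply Markov's inequality applied to a two-valued variable.
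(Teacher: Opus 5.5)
Your proposal is correct and follows essentially the same argument as the paper. Both write $\sup_{d}$ of the $\{0,1/\alpha\}$-valued kernel as $1/\alpha$ times the indicator that $\ell(d)\notin C_\alpha^d(X)$ for some $d$, and then apply Theorem~\ref{thm:E-consequence}; your treatment of $\alpha=0$, $\alpha\ge 1$, the value at the empty hypothesis, and the case $\ell=L_P$ (via reflexivity) makes explicit details the paper leaves implicit.
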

            \begin{proof}
                As $\e$ is $\{0, 1/\alpha\}$-valued we have, for every $d \in \D$, $\e(H_{d, \ell(d)} \mid x) = 1/\alpha \times \ind{\e(H_{d, \ell(d)} \mid x) \geq 1/\alpha} = 1/\alpha \times \ind{\ell(d) \not\in C_\alpha^d(x)}$.
                Hence,
                \begin{align*}
                    \sup_{d \in \D} \e(H_{d, \ell(d)} \mid x) 
                        = 1/\alpha \times \ind{\ell(d) \not\in C_\alpha^d(x) \textnormal{ for some } d \in \D}.
                \end{align*}
                The result then follows from Theorem \ref{thm:E-consequence}.
            \end{proof}

            \begin{cor}[Post-hoc consequence bound]\label{cor:post-hoc_probability_loss_bound}
                Let $\e$ be a valid E-capacity kernel for $\H$.
                Assume that $P \mapsto L_P$ is $\H$-order-measurable.
                For every $P \in H_\ell$, $\ell \in \L$ and data-dependent level $\widetilde{\alpha} \in [0, \infty]$,
                \begin{align*}
                    \Ex^P\left[\frac{P\left(\ell(d) \not\in C_{\widetilde{\alpha}}^d(X) \textnormal{ for some } d \in \D \mid \widetilde{\alpha}\right)}{\widetilde{\alpha}}\right] \leq 1,
                \end{align*}
                under the measurability assumption of Theorem \ref{thm:E-consequence}.
                In particular, this holds for $\ell(d) = L_P(d)$.
            \end{cor}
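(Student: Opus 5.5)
The plan is to reduce Corollary \ref{cor:post-hoc_probability_loss_bound} to Theorem \ref{thm:E-consequence} through the post-hoc argument behind Proposition \ref{prp:post-hoc}. Fix $\ell \in \L$ and $P \in H_\ell$, and write
\begin{align*}
    E(x) := \sup_{d \in \D} \e(H_{d, \ell(d)} \mid x),
\end{align*}
which is $\Sigma$-measurable by the standing assumption. Theorem \ref{thm:E-consequence} gives $\Ex^P[E(X)] \leq 1$, so it suffices to show that the left-hand side of the corollary is at most $\Ex^P[E(X)]$.

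The first step is to rewrite the failure event in terms of $E$. By the definition of $C_\alpha^d$, we have $\ell(d) \not\in C_{\widetilde{\alpha}}^d(x)$ exactly when $\e(H_{d,\ell(d)} \mid x) \geq 1/\widetilde{\alpha}$. Hence the event that this happens for some $d$ is contained in $\{E(x) \geq 1/\widetilde{\alpha}\}$, and the two coincide whenever the supremum defining $E(x)$ is attained. The pointwise inequality
\begin{align*}
    \frac{\ind{E \geq 1/\widetilde{\alpha}}}{\widetilde{\alpha}} \leq E
\end{align*}
then holds, with the degenerate cases $\widetilde{\alpha} \in \{0, \infty\}$ handled by the paper's conventions $c/0 = \infty$, $c/\infty = 0$ and $0/0 = 0$.

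The second step conditions on $\widetilde{\alpha}$. Since $\widetilde{\alpha}$ is $\sigma(\widetilde{\alpha})$-measurable, the conditional probability divided by $\widetilde{\alpha}$ equals $\Ex^P[\ind{\cdot}/\widetilde{\alpha} \mid \widetilde{\alpha}]$. Applying the tower property, then monotonicity of conditional expectation, gives the left-hand side
\begin{align*}
    \leq \Ex^P\left[\Ex^P[E(X) \mid \widetilde{\alpha}]\right] = \Ex^P[E(X)] \leq 1.
\end{align*}
The final claim, for $\ell = L_P$, follows because $P \in H_{L_P}$ by reflexivity of $\succsim$.

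The main technical obstacle is the careful treatment of the conditional expectation when $\widetilde{\alpha}$ can take the values $0$ or $\infty$. Where $\widetilde{\alpha} = 0$, the quantity $1/\widetilde{\alpha}$ is infinite, so the indicator is $\ind{E = \infty}$. If that event has positive probability the bound on $\Ex^P[E]$ already fails, so it is null, and the convention $0 \times \infty = 0$ makes that contribution vanish. I would state these conventions explicitly rather than redo the general argument. The corollary claims equivalence with Theorem \ref{thm:E-consequence}; for the converse direction I would choose $\widetilde{\alpha} = 1/E$ and follow the reverse direction of Proposition \ref{prp:post-hoc}. However, only the forward bound is asserted in the corollary's displayed statement, so I would prove that and only remark on the converse.
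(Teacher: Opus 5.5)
Your proposal is correct and follows the argument the paper intends. The paper gives no separate proof; it only says the corollary is Theorem \ref{thm:E-consequence} recast as in Proposition \ref{prp:post-hoc}. That is what you do: get $\Ex^P[E(X)] \leq 1$ from Theorem \ref{thm:E-consequence}, then run the forward half of the proof of Proposition \ref{prp:post-hoc} with the pointwise chain $\ind{\ell(d) \not\in C^d_{\widetilde{\alpha}} \textnormal{ for some } d}/\widetilde{\alpha} \leq \ind{E \geq 1/\widetilde{\alpha}}/\widetilde{\alpha} \leq E$ and the tower property.

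One caution on your side remark about the converse. With exactly $\widetilde{\alpha} = 1/E$, the failure event does not occur where $E > 0$ but the supremum over $d$ is not attained, so that choice only gives $\Ex^P[E(X)\,\ind{\textnormal{supremum attained}}] \leq 1$. Levels such as $\widetilde{\alpha} = 1/\min\{(1-\epsilon)E, n\}$, followed by monotone convergence, repair this.
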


            \begin{rmk}[Related work]\label{rmk:uniform_loss_bounds_literature}
                Analogues of Corollary \ref{cor:probability_loss_bound} and Corollary \ref{cor:post-hoc_probability_loss_bound} have been studied before for real-valued consequences (losses).
                In particular, such a version of Corollary \ref{cor:probability_loss_bound} is introduced by \citet{andrews2025certified}.
                Moreover, such a version of Corollary \ref{cor:post-hoc_probability_loss_bound} is introduced in \citet{koning2025fuzzy}, and of Corollary \ref{cor:probability_loss_bound} by \citet{kiyani2025decision}, both in the context of predictive inference; see Section \ref{sec:abstract_hypotheses}.

                To the best of our knowledge, the idea to generalize beyond real-valued consequences is novel.
            \end{rmk}
            
         \subsection{Relationship to Gr\"unwald's loss bound}\label{sec:grunwald}
            We now present the connection to the work of \citet{grunwald2023posterior}.
            For this purpose, we must move to a numerical consequence (loss) space $\C = [0, \infty]$ equipped with the natural order $\geq$.
            This results in a non-negative $\H$-order-measurable function $P \mapsto L_P(d)$, which is precisely what we need to perform E-integration.
            We develop E-integration in detail in Appendix \ref{sec:E-integration}, but the representation given in Proposition \ref{prp:d_loss_uniform} suffices here.

            Proposition \ref{prp:d_loss_uniform} shows that if $(\P, \H)$ is intersection-closed then the E-integral downweights the consequences $L_P(d)$ against which there exists much evidence.
            Here, the right-hand-side corresponds to the `E-posterior risk assessment' introduced by \citet{grunwald2023posterior}, and minimizing the E-integrated loss corresponds to his `E-posterior-minimax decision rule'.
            
            \begin{prp}\label{prp:d_loss_uniform}
                Let $\e$ be an E-measure for $\H_L$.
                Then, for every $d \in \D$,
                \begin{align*}
                    \int^E L_P(d) \, d\e(P)
                        = \sup_{P \in \P} \frac{L_P(d)}{\e(H_{L_P})}
                        = \sup_{P \in \P} \frac{L_P(d)}{\e(H_{d, L_P(d)})}
                \end{align*}
            \end{prp}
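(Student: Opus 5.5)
The plan is to unfold the E-integral from Appendix \ref{sec:E-integration}. I take it to be the Shilkret-type integral on the e-scale, $\int^E f \, d\e = \sup_{t \in [0,\infty]} t / \e(\{P : f(P) \geq t\})$; if the appendix uses a different but equivalent layer-cake form, only Step 1 below changes. For $f(P) = L_P(d)$ with $\C = [0,\infty]$ ordered by $\geq$, the level sets are exactly $\{P : L_P(d) \geq t\} = H_{d,t}$, which lie in $\H_L$ by Proposition \ref{prp:smallest_relevant}. Hence
\begin{align*}
\int^E L_P(d) \, d\e(P) = \sup_{t} \frac{t}{\e(H_{d,t})}.
\end{align*}
The key tool is that $(\P, \H_L)$ is intersection-closed with least hypotheses $H_{L_P}$ (Remark \ref{rmk:HL_least_hypothesis}). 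Since $\e$ is an E-measure, and hence equal to its own closure by Theorem \ref{thm:smallest_dominating_E-measure}, Theorem \ref{thm:closure_least_true_hypothesis} gives
\begin{align*}
\e(H) = \inf_{P \in H} \e(H_{L_P}), \quad H \in \H_L.
\end{align*}

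\textbf{Step 1 (integral equals the first supremum).} Taking $t = L_P(d)$ in the level-set supremum shows that it dominates $\sup_P L_P(d)/\e(H_{d,L_P(d)})$. Antitonicity with $H_{L_P} \subseteq H_{d,L_P(d)}$ gives $\e(H_{d,L_P(d)}) \leq \e(H_{L_P})$, so this in turn dominates $\sup_P L_P(d)/\e(H_{L_P})$.

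For the reverse bound, fix $t$. If $H_{d,t} = \emptyset$, then $\e(H_{d,t}) = \infty$ and the term is $0$. Otherwise, by the infimum representation, $t/\e(H_{d,t}) = \sup_{P \in H_{d,t}} t/\e(H_{L_P})$. Every $P \in H_{d,t}$ has $L_P(d) \geq t$, so this is at most $\sup_{P} L_P(d)/\e(H_{L_P})$. This closes a cycle of inequalities among the three quantities, which proves both equalities at once.

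\textbf{Step 2 (conventions).} I must check that $1/\inf_i a_i = \sup_i 1/a_i$ and $t \cdot \sup_i b_i = \sup_i t\, b_i$ hold under the paper's conventions $c/0 = \infty$, $c/\infty = 0$, $0/0 = 0$ and $0 \times \infty = 0$. This includes the edge cases $t = 0$ (every term vanishes), $t = \infty$, and $\e(H_{L_P}) \in \{0, \infty\}$. The case $0/0 = 0$ is harmless because it only arises where $L_P(d) = 0$ or $t = 0$, and there both sides are $0$.

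I expect the main obstacle to be the bookkeeping of these $0$/$\infty$ cases together with matching the precise definition of the E-integral used in the appendix. The order-theoretic core is immediate from Theorem \ref{thm:closure_least_true_hypothesis}.
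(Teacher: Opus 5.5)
Your proof is correct, but it is organized differently from the paper's.

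The paper gets the first equality in one line from Proposition \ref{prp:least_true_integration}. The proof of that proposition represents $f = \sup_{P \in \P} f(P) 1_{H_P}$, interchanges this supremum with the E-integral (Lemma \ref{lem:sup_interchange}), and then applies positive homogeneity and the indicator lemma. The paper proves the second equality separately: it substitutes $\e(H_{d,L_P(d)}) = \inf_{P' \in H_{d,L_P(d)}} \e(H_{L_{P'}})$ from Theorem \ref{thm:closure_least_true_hypothesis}, then exchanges the two suprema over the pairs $(P,P')$ with $L_{P'}(d) \geq L_P(d)$.

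You instead stay with the level-set definition and close a single cycle:
\[
\int^E L_P(d)\, d\e(P) \;\geq\; \sup_{P} \frac{L_P(d)}{\e(H_{d,L_P(d)})} \;\geq\; \sup_{P} \frac{L_P(d)}{\e(H_{L_P})} \;\geq\; \int^E L_P(d)\, d\e(P).
\]
Both equalities therefore come out at once, and you never need the sup-interchange lemma or the pointwise representation of $f$.

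The paper's route buys modularity. The real work sits in a general statement valid for every $\H$-order-measurable $f$ (and for suprema of families of them), and this proposition is a specialization of it.

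Your route is more elementary and self-contained, and it isolates where each hypothesis enters:
\begin{itemize}
\item the first inequality is essentially the definition of the integral, with antitonicity needed only for $L_P(d) = \infty$ when the supremum runs over finite $c > 0$;
\item the second uses only $H_{L_P} \subseteq H_{d,L_P(d)}$ and antitonicity;
\item only the third uses the closure property of $\e$.
\end{itemize}

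The $0$/$\infty$ bookkeeping you flag in Step 2 is equally implicit in the paper's argument. For example, the paper applies positive homogeneity with $a = f(P)$, which may be $0$ or $\infty$. So this is not a gap relative to the paper's proof.
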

            
            The E-integrated loss is the E-analogue of expected loss.
            As the E-integrated loss is $[0, \infty]$-valued, we can use it to rank the decisions:
            \begin{align*}
                d \mapsto \int^E L_P(d)\, d\e(P).
            \end{align*}
            A natural decision is to minimize the evidence-weighted loss:
            \begin{align*}
                \argmin_{d \in \D} \int^E L_P(d) \, d\e(P),
            \end{align*}
            assuming such a decision exists.
            
            Moving to E-kernels, we recover the main result of \citet{grunwald2023posterior}.
            Its proof follows directly from combining Markov's inequality for E-integrals with Theorem \ref{thm:E-consequence}.

            \begin{thm}[Gr\"unwald-type bound]\label{thm:grunwald_bound}
                Let $\e$ be a valid E-capacity kernel for $\H$.
                Suppose that $P \mapsto L_P$ is $\H$-order-measurable.
                Assume $\D$ is at most countable and $x \mapsto \int^E L_P(d) \, d\e(P \mid x)$ is $\Sigma$-measurable.
                Then, for every $P \in \P$,
                \begin{align*}
                    \Ex^P\left[\sup_{d \in \D}\frac{L_P(d)}{\int^E L_{P'}(d) \, d\e(P' \mid X)}\right] \leq 1.
                \end{align*}
            \end{thm}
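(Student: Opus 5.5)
The plan is to reduce the claim to Theorem \ref{thm:E-consequence} applied at the benchmark $\ell = L_P$, via a pointwise E-Markov-type inequality for the E-integral. The E-integral itself is only developed in Appendix \ref{sec:E-integration}. The only property I will use is the one delivered by the representation in Proposition \ref{prp:d_loss_uniform}, which I take in its inequality form: for every $P \in \P$, $d \in \D$ and $x \in \X$,
\begin{align*}
    \int^E L_{P'}(d) \, d\e(P' \mid x) \geq \frac{L_P(d)}{\e(H_{d, L_P(d)} \mid x)}.
\end{align*}
For an E-measure on $\H_L$ this is immediate from Proposition \ref{prp:d_loss_uniform}, since the supremum over $P'$ dominates the term at $P' = P$. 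Rearranged under the paper's infinity conventions, it gives
\begin{align*}
    \frac{L_P(d)}{\int^E L_{P'}(d) \, d\e(P' \mid x)} \leq \e(H_{d, L_P(d)} \mid x).
\end{align*}
The edge cases $L_P(d) = 0$, an infinite E-value, or a zero or infinite integral need to be checked against the conventions $0/0 = 0$ and $c/0 = \infty$. When the integral is $0$ we have $L_P(d)/\e = 0$, so $L_P(d) = 0$ or $\e = \infty$, and both sides of the rearranged bound are consistent.

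Given this pointwise bound, I take the supremum over $d \in \D$ to obtain
\begin{align*}
    \sup_{d \in \D} \frac{L_P(d)}{\int^E L_{P'}(d) \, d\e(P' \mid x)} \leq \sup_{d \in \D} \e(L_{P^*}(d) \succsim L_P(d) \mid x).
\end{align*}
I then take $\Ex^P$ of both sides. Since trivially $P \in H_{L_P}$, Theorem \ref{thm:E-consequence} with $\ell = L_P$ bounds the right-hand side by $1$. For measurability, the left-hand side is a countable supremum of measurable maps, because $\D$ is at most countable and each integral is assumed measurable. The right-hand side is measurable for the same reason, since the family $\{H_{d, L_P(d)} : d \in \D\}$ is countable, so the measurability assumption in Theorem \ref{thm:E-consequence} is met.

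The main obstacle is that $\e$ here is only a valid E-capacity kernel on a general $\H \supseteq \H_L$, not an E-measure on $\H_L$, so Proposition \ref{prp:d_loss_uniform} does not apply verbatim. I would handle this as follows. Restricting $\e(\cdot \mid x)$ to $\H_L$ gives a valid E-capacity kernel there. Under the appendix definition, I expect the E-integral of an E-capacity to be defined through, or at least bounded below by, the integral of its restriction or closure on $\H_L$. By Theorem \ref{thm:closure_least_true_hypothesis} and Corollary \ref{cor:closure_least_true}, the closure keeps the values on the least hypotheses $H_{L_P}$ unchanged. Proposition \ref{prp:d_loss_uniform} then gives the lower bound $L_P(d)/\e(H_{L_P} \mid x)$.

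That bound uses $H_{L_P}$ rather than $H_{d, L_P(d)}$, which is actually harmless. Since $H_{L_P} \subseteq H_{d, L_P(d)}$, the route through the proof of Theorem \ref{thm:E-consequence} still bounds everything by $\e(H_{L_P} \mid x)$, and validity at $P \in H_{L_P}$ finishes the argument. If the appendix defines the E-integral differently, I would instead prove the one-line E-Markov inequality directly from that definition, namely that the integral of $f$ is at least $f(P)/\e(H_{L_P})$.
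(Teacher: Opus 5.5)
Your proof is correct, and its skeleton is the paper's: a pointwise E-Markov-type bound, then the supremum over $d$, then Theorem \ref{thm:E-consequence}. Your primary way of getting the pointwise bound, however, is a detour.

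The appendix defines $\int^E f\,d\e := \sup_{c>0} c/\e(\{f \geq c\})$ for any E-function. E-Markov (Proposition \ref{prp:markov}) with $f(P') = L_{P'}(d)$ and $c = L_P(d)$ therefore gives in one line
\[
\int^E L_{P'}(d)\,d\e(P' \mid x) \geq \frac{L_P(d)}{\e(H_{d, L_P(d)} \mid x)},
\]
because the level set $\{P' : L_{P'}(d) \geq L_P(d)\}$ is exactly $H_{d, L_P(d)}$. The case $L_P(d) = 0$ is trivial by $0/0 = 0$. No E-measure property, intersection-closure or closure is needed, so the ``obstacle'' you identify does not arise. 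Your fallback is precisely this argument, and it is the paper's proof.

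Your closure route is nonetheless sound, and the expectation you left hedged does hold. The integral only evaluates $\e(\cdot \mid x)$ on level sets $H_{d,c} \in \H_L$, and it is antitone in $\e$. Replacing $\e$ by its closure on $\H_L$, which dominates it by Theorem \ref{thm:smallest_dominating_E-measure}, can therefore only lower the integral. Proposition \ref{prp:d_loss_uniform} and Corollary \ref{cor:closure_least_true} then give the lower bound $L_P(d)/\e(H_{L_P} \mid x)$.

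Compared with the paper, this route costs heavier machinery and a cruder intermediate bound: $\e(H_{L_P} \mid x)$ instead of $\e(H_{d, L_P(d)} \mid x)$. What it buys is that the last step becomes plain validity at the single least hypothesis $H_{L_P} \ni P$. The paper's sharper pointwise bound is what supports its remark that Gr\"unwald's bound is E-Markov applied to the E-consequence bound, and hence weaker than it.
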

            \begin{proof}
                By Proposition \ref{prp:markov} (E-Markov), 
                \begin{align*}
                    \frac{L_P(d)}{\int^E L_{P'}(d) \, d\e(P' \mid x)}
                        &\leq \e\left(L_{P^*}(d) \geq L_P(d) \mid x\right).
                \end{align*}
                Taking the supremum $\sup_{d \in \D}$ on both sides and applying Theorem \ref{thm:E-consequence} yields the result.
            \end{proof}

            \begin{rmk}[Relationship to \citet{grunwald2023posterior}]
                \citet{grunwald2023posterior} explores valid E-function kernels (under the name `E-posterior') for $\H = 2^\P$, as an E-function density on $\P$.
                As the Gr\"unwald-type loss bound passes through E-Markov's inequality, which is as loose as the classical Markov's inequality, it is weaker than our E-consequence bound.
                Moreover, the Gr\"unwald-type bound only works for $[0, \infty]$-valued consequences, whereas the E-consequence bound works for preorder-valued consequences.
                
                An inquiry into why the Gr\"unwald-type bound requires non-negative losses whereas probability bounds as in Corollary \ref{cor:probability_loss_bound} work for arbitrary real-valued losses was the starting point of this work.
            \end{rmk}
            
        \subsection{Evidential admissibility}
            An E-measure for $\H_L$ allows us to compare across decisions through a uniform evidential dominance.
            In particular, we define the preorder $\succsim_{\e}$ on $\D$ through
            \begin{align*}
                d \succsim_{\e} d'
                    \iff \e(L_{P^*}(d) \succsim c) \geq \e(L_{P^*}(d') \succsim c),
                    \quad \textnormal{for every } c \in \C.
            \end{align*}
            This can be interpreted as $d$ being uniformly preferred to $d'$ in terms of its consequences, as far as the evidence is concerned.
            More precisely, for every benchmark consequence $c$, this states that the evidence against the claim that the true consequence under $d$ is at least as bad as $c$ is at least as strong as the corresponding evidence for $d'$.

            Using this preorder on $\D$, we can rule out decisions that are uniformly dominated in terms of the evidence, leading to a notion of evidential admissibility.

            \begin{dfn}[Evidential admissibility]
                A decision $d \in \D$ is evidentially inadmissible if there exists a $d' \in \D$ such that $d' \succ_\e d$ ($\succsim_{\e}$ and $\not\precsim_{\e}$).
                A decision is evidentially admissible if it is not evidentially inadmissible.
            \end{dfn}
            
        \subsection{Identifying an optimal decision and MLE}\label{sec:optimal}
            In this section, we briefly highlight another interesting task in this decision problem: finding an optimal decision.
            
            Suppose a, say unique, optimal decision $d_P^* = \argmin_{d \in \D} L_P(d)$ exists for every $P \in \P$.
            In this case, an alternative task could be to select the optimal decision $d_{P^*}^*$ under the truth $P^*$.
            The relevant notion of measurability here is measurability of this optimal decision $d_{P^*}^*$.
            The smallest hypothesis class for which $d_{P^*}^*$ is measurable is then generated from hypotheses that group probabilities that share the same optimal decision
            \begin{align*}
                H_d := \{P \in \P : d \in \argmin_{d' \in \D} L_P(d')\}.
            \end{align*}
            Here, $P^* \in H_d$ can be interpreted as the claim that the true optimal decision $d_{P^*}^*$ equals $d$.
            A resulting E-measure presents the evidence against a particular decision $d$ being the true optimal decision.
            A decision can then be selected among decisions for which there is little evidence against their optimality.

            \begin{exm}[Maximum likelihood]
                An example is maximum likelihood estimation.
                Indeed, suppose we consider the consequence function $L_{P}(P') = \textnormal{KL}(P \mid P')$, and assume $\P$ admits a dominating $\sigma$-finite measure $\lambda$.
                Then $L_{P}(P') = 0$ only if $P = P'$, so the unique optimal decision is $d_P^* = P$.
                Hence, the hypotheses $H_P = \{P' \in \P : P \in \argmin_{Q \in \P} L_{P'}(Q)\}$ are the singleton sets $\{P\}$.
                This means $\H$-measurability of $d_{P^*}^*$ requires $\H = 2^\P$.
                
                An example of an E-kernel density is the inverse of the likelihood $\e(\{P\} \mid x) = (dP(x)/d\lambda)^{-1}$, which presents the evidence against each $P$ being the truth $P^*$.
                This extends to all hypotheses by closure.
                If $\lambda$ is a probability measure, then $\e$ is a valid E-kernel.
                The decision that minimizes the evidence against the singleton hypotheses $\{P\}$ is the maximum likelihood estimator: $\widehat{P}(x) := \argmin_{P \in \P} \e(\{P\} \mid x) = \argmax_{P \in \P} dP / d\lambda(x)$.

                Using Theorem \ref{thm:E-consequence} we can obtain an E-consequence bound on the true Kullback-Leibler divergence to the maximum likelihood estimator.
                Indeed, for every $P \in \P$,
                \begin{align*}
                    \Ex^P\left[\e\left(\textnormal{KL}(P^* \mid \widehat{P}(X)) \ge \textnormal{KL}(P \mid \widehat{P}(X)) \mid X\right)\right] 
                        \leq 1.
                \end{align*}
            \end{exm}
            
    \section{Cross-hypothesis validity}\label{sec:multiplicity} 
        To this point, we have only considered hypothesis-wise validity of E-kernels.
        We now consider cross-hypothesis notions of validity, which may account for multiplicity.
        
        \subsection{Familywise evidence control}\label{sec:familywise}
            We start with defining `familywise evidence': the E-analogue generalization of the classical `familywise error rate'.
            
            To interpret the familywise evidence, it helps to again consider a `true' probability $P^*$ and to call a hypothesis $H$ `true' if $P^* \in H$.
            Familywise evidence control then ensures we may browse the entire E-kernel $H \mapsto \e(H \mid x)$ to select a hypothesis post-hoc, and be confident that the e-variable against the selected hypothesis is valid if this hypothesis is true.
            Standard validity is only enough to guarantee such validity for prespecified hypotheses.
            
            \begin{dfn}[Familywise evidence]
                Let $\e$ be an E-function kernel.
                Assume that $x \mapsto \sup_{H \ni P} \e(H \mid x)$ is $\Sigma$-measurable, for every $P \in \P$.
                We define the familywise evidence of $\e$ as
                \begin{align*}
                    \sup_{H \in \H : H \ni P} \e(H \mid X),
                \end{align*}
                and say that $\e$ controls the familywise evidence if
                \begin{align*}
                    \sup_{P \in \P} \Ex^P\left[\sup_{H \in \H : H \ni P} \e(H \mid X) \right] \leq 1.
                \end{align*}
            \end{dfn}
            \begin{rmk}
                The $\Sigma$-measurability is guaranteed if $\H$ is at most countable.
            \end{rmk}

            Theorem \ref{thm:familywise} presents our main result for familywise evidence control.
            It states that familywise evidence control coincides with standard validity if $(\P, \H)$ is intersection-closed, for E-capacity kernels.
            This effectively means that we obtain familywise evidence control `for free'.
            
            \begin{thm}[Hypothesis-wise validity = familywise evidence control]\label{thm:familywise}
                Suppose $(\P, \H)$ is intersection-closed.
                Let $\e$ be an E-capacity kernel.
                $\e$ is valid if and only if it controls the familywise evidence. 
            \end{thm}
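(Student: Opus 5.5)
The key observation is that, under intersection-closure, the supremum in the familywise evidence is attained at the least hypothesis. Fix $P \in \P$. By Lemma \ref{lem:intersection_closed}, $P$ has a least hypothesis $H_P \in \H$ with $P \in H_P$, and every $H \in \H$ with $P \in H$ satisfies $H_P \subseteq H$. Since $x \mapsto \e(\cdot \mid x)$ is an E-capacity for each $x$, antitonicity gives $\e(H \mid x) \leq \e(H_P \mid x)$ for every such $H$. Because $H_P$ is itself one of the hypotheses containing $P$, the supremum is attained there:
\begin{align*}
    \sup_{H \in \H : H \ni P} \e(H \mid x) = \e(H_P \mid x), \quad x \in \X.
\end{align*}
This identity also settles the measurability assumption in the definition of familywise evidence for free, since $x \mapsto \e(H_P \mid x)$ is $\Sigma$-measurable by the definition of an E-capacity kernel. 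No countability assumption is therefore needed here, in contrast with Theorem \ref{thm:closure_pointwise_validity}.

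For the direction ``valid $\Rightarrow$ familywise control'', fix $P \in \P$. The hypothesis $H_P$ is nonempty because it contains $P$, so $H_P \in \H^\emptyset$. Hypothesis-wise validity applied to $H_P$ and the point $P \in H_P$ gives $\Ex^P[\e(H_P \mid X)] \leq 1$. By the identity above, this is exactly $\Ex^P[\sup_{H \ni P} \e(H \mid X)] \leq 1$, and taking the supremum over $P \in \P$ yields familywise evidence control.

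For the converse, fix $H \in \H^\emptyset$ and $P \in H$. Then $\e(H \mid x) \leq \sup_{H' \ni P} \e(H' \mid x)$ pointwise, so $\Ex^P[\e(H \mid X)] \leq \Ex^P[\sup_{H' \ni P}\e(H' \mid X)] \leq 1$. This direction uses neither intersection-closure nor the capacity property.

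There is no real obstacle. The only delicate point is making sure the supremum is attained at a hypothesis that is actually in $\H$, which is exactly what intersection-closure supplies through Lemma \ref{lem:intersection_closed}. I would also take care that $\emptyset$ never enters the supremum, since it cannot contain $P$, so the convention $\e(\emptyset) = \infty$ plays no role.
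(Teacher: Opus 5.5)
Your proposal is correct and follows essentially the same route as the paper. You establish $\sup_{H \ni P} \e(H \mid x) = \e(H_P \mid x)$ via leastness and antitonicity, which also yields measurability, then apply validity at $H_P$ for one direction and bound $\e(H \mid x)$ pointwise for the other. The only cosmetic difference is in the converse: you bound by the supremum directly rather than by $\e(H_P \mid x)$ via antitonicity, which, as you note, shows that this direction needs no structural assumptions.
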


            \begin{cor}[Closure]
                If $(\P, \H)$ is intersection-closed and $\e$ is a familywise evidence controlling E-capacity kernel, then its closure $\overline{\e}$ is a familywise evidence controlling E-measure kernel that dominates $\e$.
            \end{cor}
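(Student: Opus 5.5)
The corollary follows by combining Theorem \ref{thm:familywise} with the closure machinery of Theorem \ref{thm:closure_pointwise_validity}. Familywise control will be transferred to the closure through its hypothesis-wise validity.

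\textbf{Step 1: from familywise control to validity of $\e$.} Let $\e$ be a familywise evidence controlling E-capacity kernel. By Theorem \ref{thm:familywise}, $\e$ is hypothesis-wise valid. In particular, each least hypothesis satisfies $\Ex^P[\e(H_P \mid X)] \leq 1$ for every $P$ with $P \in H_P$.

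\textbf{Step 2: properties of the closure.} Fix $x$. Proposition \ref{prp:induced_E-measure} gives that $H \mapsto \overline{\e}(H \mid x)$ is an E-measure. Theorem \ref{thm:smallest_dominating_E-measure} gives $\overline{\e}(H \mid x) \geq \e(H \mid x)$, so $\overline{\e}$ dominates $\e$.

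Validity of $\overline{\e}$ then follows exactly as in the proof of Theorem \ref{thm:closure_pointwise_validity}. For $H \in \H^\emptyset$ and $P \in H$, antitonicity gives $\overline{\e}(H \mid x) \leq \overline{\e}(H_P \mid x)$. Corollary \ref{cor:closure_least_true} gives $\overline{\e}(H_P \mid x) = \e(H_P \mid x)$. Hence
\begin{align*}
\Ex^P[\overline{\e}(H \mid X)] \leq \Ex^P[\e(H_P \mid X)] \leq 1.
\end{align*}

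\textbf{Step 3: back to familywise control.} Apply Theorem \ref{thm:familywise} in the reverse direction to the E-measure (hence E-capacity) kernel $\overline{\e}$; this yields familywise evidence control of $\overline{\e}$. The argument is also direct: for every $H \ni P$ we have $H_P \subseteq H$, so
\begin{align*}
\sup_{H \ni P} \overline{\e}(H \mid x) = \overline{\e}(H_P \mid x) = \e(H_P \mid x).
\end{align*}
Its $P$-expectation is therefore at most $1$. This direct route also settles the measurability required in the definition of familywise evidence, since the supremum equals the measurable map $x \mapsto \e(H_P \mid x)$.

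\textbf{Main obstacle.} The delicate point is measurability: that $\overline{\e}$ is an E-measure kernel, meaning $x \mapsto \overline{\e}(H \mid x) = \inf_{P \in H} \e(H_P \mid x)$ is $\Sigma$-measurable for every $H$. The corollary does not assume countably many least hypotheses, unlike Theorem \ref{thm:closure_pointwise_validity}. I would first try to handle this as that theorem's remark allows, by assuming the countability condition or assuming measurability of $\overline{\e}$ directly, and read the corollary under that standing convention.

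The remaining claims are unaffected by this issue. Validity and familywise control only involve the values $\overline{\e}(H_P \mid \cdot) = \e(H_P \mid \cdot)$, which are measurable, together with the pointwise domination inequality.
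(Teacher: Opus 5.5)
Your proposal is correct and follows the paper's intended argument. The paper gives no separate proof; it chains Theorem \ref{thm:familywise} (familywise control $\iff$ validity) with Theorem \ref{thm:closure_pointwise_validity} (closure dominates and preserves validity), which rests on the identity $\sup_{H \ni P}\overline{\e}(H \mid x) = \overline{\e}(H_P \mid x) = \e(H_P \mid x)$ that you use. Your measurability caveat is well-founded: the corollary silently inherits the assumption of Theorem \ref{thm:closure_pointwise_validity} that there are at most countably many least hypotheses (or that $\overline{\e}$ is measurable), and without it $\overline{\e}$ need not be an E-measure kernel.
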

            The reason that this result holds is that among all hypotheses $H$ that contain $P$, intersection-closure guarantees the existence of a least hypothesis: $H_P$.
            Since E-capacities are antitone, this least hypothesis automatically attains the largest evidence:
            \begin{align*}
                \sup_{H \ni P} \e(H \mid x)
                    = \e(H_P \mid x).
            \end{align*}
            As a result, controlling the maximum evidence over all true hypotheses for $P$ is equivalent to controlling the evidence for the single (least) true hypothesis $H_P$.

            \begin{rmk}
                The idea to extend the familywise error rate to E-values was briefly sketched in \citet{koning2023post}, and explored in more detail by \citet{hartog2025family}, both for finite collections of hypotheses.
                The fact that hypothesis-wise validity suffices for a wide range of multiplicity-corrections for binary rejection sets was recently established by \citet{xu2025bringing}.
                Our result goes beyond binary rejection sets, by expanding to E-kernels.
            \end{rmk}

            \begin{rmk}[Recovering classical familywise error rate]
                Recall that a classical discovery set $R_\alpha(x)$ is equivalent to an $\{0, 1/\alpha\}$-valued E-function kernel:
                \begin{align*}
                    R_\alpha(x)
                        := \{H \in \H^\emptyset : \e(H \mid x) \geq 1/\alpha\}.
                \end{align*}
                Indeed, we can reconstruct the E-function kernel through $\e(H \mid x) = 1/\alpha \times \ind{H \in R_\alpha(x)}$, $H \in \H^\emptyset$.
                In this binary setting, familywise evidence control is exactly the classical familywise error rate:
                \begin{align*}
                    \sup_{P \in \P} &\frac{P\left(\textnormal{for some } H \in \H, H \ni P : H \in R_\alpha(X)\right)}{\alpha}\\
                        &= \sup_{P \in \P} \Ex^P\left[\sup_{H \in \H : H \ni P} \ind{H \in R_\alpha(X)}/\alpha\right] \\
                            &= \sup_{P \in \P} \Ex^P\left[\sup_{H \in \H : H \ni P} \e(H \mid X)\right]
                            \leq 1.
                \end{align*}
            \end{rmk}

        \subsection{False Evidence Rate control}
            The False Discovery Rate (FDR) is the expected proportion of true hypotheses (false discoveries) in a rejected set \citep{benjamini1995controlling}.
            Generalizing from rejections to evidence, we introduce its natural E-analogue: the False Evidence Rate (FER).
            
            Since we do not have a `rejection set' when we work with a continuous notion of evidence, we replace this with a data-dependent `selection rule'.
            The FER then corresponds to the expected average amount of \emph{evidence} against selected true hypotheses (false evidence).

            \begin{dfn}[Selection rule]
                A selection rule is a map $S : \X \to 2^\H$ such that $S(x)$ is finite, $x \in \X$, and $x \mapsto \ind{H \in S(x)}$ is $\Sigma$-measurable, $H \in \H$.
            \end{dfn}
            
            \begin{dfn}[False evidence proportion and rate]
                Let $\e$ be an E-function kernel and $S$ a selection rule.
                We define the false evidence proportion at $P$ with respect to $S$ as
                \begin{align*}
                    \textnormal{FEP}_P^S(x)
                        := \frac{1}{|S(x)| \vee 1} \sum_{H \in S(x) : H \ni P} \e(H \mid x).
                \end{align*}
                The corresponding false evidence rate is $\sup_{P \in \P} \Ex^P[\textnormal{FEP}_P^S(X)]$, and we say that an E-function kernel controls the FER for $S$ if this false evidence rate is bounded by 1.
            \end{dfn}
            
            \begin{dfn}[False selection proportion]
                For a given $P \in \P$ and selection rule $S$, we denote the proportion of true hypotheses among the selected hypotheses by 
                \begin{align*}
                    \textnormal{FSP}_P^S(x)
                        := \frac{|S(x) \cap \{H : H \ni P\}|}{|S(x)| \vee 1}.
                \end{align*}
                We call this the `false' selection proportion (FSP), because if the selections are treated as `discoveries' then these discoveries are false if $P$ is the truth.
            \end{dfn}

            Our main result here is Theorem \ref{thm:FER}, which provides an upper bound on the false evidence proportion in terms of the evidence against the least hypotheses and False Selection Proportion.
            
            For a given selection rule, Corollary \ref{cor:one_selection_rule} shows that this implies we can get away with a weaker notion of validity than hypothesis-wise validity, since $\textnormal{FSP}_P^S(x) \leq 1$ for all $x \in \X$.
            Corollary \ref{cor:uniform_selection_rule} shows that hypothesis-wise validity is equivalent to uniform validity across selection rules.
            
            \begin{thm}[FER control under intersection-closure]\label{thm:FER}
                Suppose $(\P, \H)$ is intersection-closed.
                Let $\e$ be an E-capacity kernel and $S$ a selection rule.
                Then, for every $P \in \P$ and $x \in \X$,
                \begin{align*}
                    \textnormal{FEP}_P^S(x)
                        \leq \textnormal{FSP}_P^S(x) \e(H_P \mid x)
                        \leq \e(H_P \mid x).
                \end{align*}
            \end{thm}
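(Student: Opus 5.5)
The argument is a termwise bound followed by counting. I will fix $P \in \P$ and $x \in \X$ and bound each summand in the false evidence proportion by the evidence against the least hypothesis $H_P$.

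\textbf{Step 1: each true hypothesis is dominated by $H_P$.} Since $(\P, \H)$ is intersection-closed, Lemma \ref{lem:intersection_closed} guarantees that $P$ has a least hypothesis $H_P \in \H$. So every $H \in \H$ with $H \ni P$ satisfies $H \supseteq H_P$. Because $H \mapsto \e(H \mid x)$ is an E-capacity, antitonicity gives $\e(H \mid x) \leq \e(H_P \mid x)$ for every such $H$. This is the same observation used after Theorem \ref{thm:familywise}.

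\textbf{Step 2: sum over the selected true hypotheses.} Applying Step 1 to each term of $\sum_{H \in S(x) : H \ni P} \e(H \mid x)$ gives
\begin{align*}
    \sum_{H \in S(x) : H \ni P} \e(H \mid x) \leq |S(x) \cap \{H : H \ni P\}| \, \e(H_P \mid x).
\end{align*}
The sum is finite because $S(x)$ is finite. Dividing by $|S(x)| \vee 1$ yields $\textnormal{FEP}_P^S(x) \leq \textnormal{FSP}_P^S(x)\, \e(H_P \mid x)$.

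\textbf{Step 3: the second inequality.} We have $|S(x) \cap \{H : H \ni P\}| \leq |S(x)| \leq |S(x)| \vee 1$, so $\textnormal{FSP}_P^S(x) \in [0,1]$. Multiplying by $\e(H_P \mid x) \geq 0$ then gives $\textnormal{FSP}_P^S(x)\, \e(H_P \mid x) \leq \e(H_P \mid x)$.

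\textbf{The main obstacle: infinite values.} The structure is routine; the only care needed is with the conventions when $\e(H_P \mid x) = \infty$.

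If $\textnormal{FSP}_P^S(x) = 0$, then no selected hypothesis contains $P$. The sum is then empty, so $\textnormal{FEP}_P^S(x) = 0$, and the bound $0 \leq 0 \cdot \infty = 0$ holds under the paper's convention $0 \times \infty = 0$.

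If $\textnormal{FSP}_P^S(x) > 0$, the right-hand side is $\infty$ and both inequalities hold trivially.

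One might worry whether $\emptyset$ can appear among the selected true hypotheses, where $\e(\emptyset \mid x) = \infty$. It cannot, since $P \notin \emptyset$, so no such summand occurs.
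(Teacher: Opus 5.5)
Your proof is correct and follows the paper's own argument: leastness of $H_P$ plus antitonicity bounds each selected true hypothesis by $\e(H_P \mid x)$, and counting the selected hypotheses that contain $P$ produces the $\textnormal{FSP}_P^S(x)$ factor. Your explicit treatment of the second inequality via $\textnormal{FSP}_P^S(x) \le 1$, and of the case $\e(H_P \mid x) = \infty$ via the convention $0 \times \infty = 0$, fills in details the paper leaves implicit.
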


            \begin{cor}[Fixed selection rule]\label{cor:one_selection_rule}
                Suppose $(\P, \H)$ is intersection-closed.
                If $\e$ is an E-capacity kernel that satisfies
                \begin{align*}
                    \sup_{P \in \P} \Ex^P\left[\textnormal{FSP}_P^S(X) \e(H_P \mid X)\right] \leq 1,
                \end{align*}
                then $\e$ controls the false evidence rate with respect to $S$.
            \end{cor}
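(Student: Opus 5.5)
The claim follows by taking expectations in Theorem~\ref{thm:FER}. Fix $P \in \P$. By Theorem~\ref{thm:FER}, which needs only intersection-closure and that $\e$ is an E-capacity kernel, we have for every $x \in \X$
\begin{align*}
    \textnormal{FEP}_P^S(x) \leq \textnormal{FSP}_P^S(x)\, \e(H_P \mid x).
\end{align*}
Both sides are non-negative. Monotonicity of the integral therefore gives $\Ex^P[\textnormal{FEP}_P^S(X)] \leq \Ex^P[\textnormal{FSP}_P^S(X)\, \e(H_P \mid X)]$. Taking the supremum over $P \in \P$ and using the assumed bound, the false evidence rate is at most $1$. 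By definition, this means $\e$ controls the FER with respect to $S$.

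The only point needing care is that these expectations are well defined, i.e.\ that $x \mapsto \textnormal{FEP}_P^S(x)$ and $x \mapsto \textnormal{FSP}_P^S(x)\e(H_P \mid x)$ are $\Sigma$-measurable.

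For the second function, $\e(H_P \mid \cdot)$ is measurable because $\e$ is a kernel. The function $\textnormal{FSP}_P^S$ depends on $x$ only through the finite set $S(x)$. I plan to treat its measurability, and that of $\textnormal{FEP}_P^S$, the same way the paper implicitly does when it defines the false evidence rate. Concretely, I would write $\textnormal{FEP}_P^S(x) = (|S(x)|\vee 1)^{-1}\sum_{H \ni P} \ind{H \in S(x)}\e(H \mid x)$ and note that each summand is measurable. If $\H$ is uncountable, the sum is not obviously measurable. In that case I would either read the corollary under the same implicit measurability convention as the definition of FER, or use outer expectations; the inequality chain is unaffected either way.

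Beyond this bookkeeping there is no real obstacle: the content is entirely in Theorem~\ref{thm:FER}.
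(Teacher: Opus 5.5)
Your proof is correct and is essentially the paper's argument: the paper gives no separate proof of this corollary and treats it as immediate from Theorem~\ref{thm:FER}, obtained by integrating the pointwise bound $\textnormal{FEP}_P^S(x) \leq \textnormal{FSP}_P^S(x)\,\e(H_P \mid x)$ under $P$ and taking the supremum over $P \in \P$. Your measurability caveat is also justified, since the paper's definition of a selection rule does not by itself make $\textnormal{FEP}_P^S$ or $\textnormal{FSP}_P^S$ measurable when $\H$ is uncountable, and the paper tacitly assumes this measurability when it defines the false evidence rate.
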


            \begin{cor}[Uniform over selection rules]\label{cor:uniform_selection_rule}
                Suppose $(\P, \H)$ is intersection-closed.
                An E-capacity kernel $\e$ is valid if and only if it controls the false evidence rate uniformly across selection rules.
            \end{cor}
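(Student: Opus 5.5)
We prove the two directions of Corollary \ref{cor:uniform_selection_rule} separately. The forward direction rests on Theorem \ref{thm:FER}. The reverse direction uses singleton selection rules, which reduce FER control to hypothesis-wise validity.

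\emph{Valid implies uniform FER control.} Fix an arbitrary selection rule $S$ and some $P \in \P$. By Theorem \ref{thm:FER}, intersection-closure gives, pointwise in $x$,
\begin{align*}
    \textnormal{FEP}_P^S(x) \leq \e(H_P \mid x).
\end{align*}
Lemma \ref{lem:intersection_closed} guarantees that $H_P \in \H$ exists and contains $P$, so $H_P \in \H^\emptyset$. Hypothesis-wise validity then yields $\Ex^P[\e(H_P \mid X)] \leq 1$. Taking expectations gives $\Ex^P[\textnormal{FEP}_P^S(X)] \leq 1$. Taking the supremum over $P$ shows that $\e$ controls the FER for $S$, and since $S$ was arbitrary, the control is uniform over selection rules.

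For measurability of $\textnormal{FEP}_P^S$, I would argue as follows. Because $S(x)$ is finite, the sum can be written as $\sum_{H \ni P} \ind{H \in S(x)}\e(H \mid x)$. This is a sum of measurable functions, but over possibly uncountably many $H$. If that is an issue, I would either assume measurability, as the paper does elsewhere, or bound via an outer expectation. The inequality only needs the expectation to make sense, so I do not expect this to be a real obstacle.

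\emph{Uniform FER control implies valid.} Fix $H \in \H^\emptyset$ and take the constant selection rule $S(x) = \{H\}$. It is trivially finite and measurable. Then $|S(x)| \vee 1 = 1$, and for every $P \in H$,
\begin{align*}
    \textnormal{FEP}_P^S(x) = \e(H \mid x).
\end{align*}
FER control for this $S$ gives $\Ex^P[\e(H \mid X)] \leq \sup_{P'} \Ex^{P'}[\textnormal{FEP}_{P'}^S(X)] \leq 1$ for all $P \in H$. This is exactly validity of the E-variable for $H$. Since $H$ was arbitrary, $\e$ is hypothesis-wise valid. This direction does not even need intersection-closure or the capacity property.

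The main step is therefore the forward direction. Its only real content, the pointwise bound in Theorem \ref{thm:FER}, is already available, so the proof should be short.
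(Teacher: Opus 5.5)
Your proof is correct and matches the paper's argument: sufficiency follows from the $S$-free pointwise bound $\textnormal{FEP}_P^S(x) \leq \e(H_P \mid x)$ of Theorem \ref{thm:FER} together with validity at $H_P$, and necessity follows from the singleton selection rules $S_H(x) = \{H\}$. The paper also leaves the measurability of $\textnormal{FEP}_P^S$ implicit, so handling it by assumption or via an outer expectation is fine.
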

            \begin{proof}[Proof of Corollary \ref{cor:uniform_selection_rule}]
                The sufficiency of hypothesis-wise validity follows from observing that the right-hand-side of $\textnormal{FSP}_P^S(x) \e(H_P \mid x) \leq \e(H_P \mid x)$ does not depend on $S$.
                The necessity follows from the existence of the selection rules $S_H(x) = \{H\}$, $H \in \H$, so that for every $P \in H$, $\textnormal{FEP}_P^{S_H}(x) = \e(H \mid x)$.
                As a result, uniform false evidence rate control implies $\Ex^P[\e(H \mid X)] = \Ex^P[\textnormal{FEP}_P^{S_H}(X)] \leq 1$, for every $P \in H$, $H \in \H^\emptyset$.
            \end{proof}
            
            \begin{rmk}[Closure]
                Following Remark \ref{rmk:closure_other_validity}, we can again take the closure of the E-capacity without loss of FER control, so that E-measure kernels are the only non-dominated FER controlling E-capacity kernels.
            \end{rmk}

            \begin{rmk}
                The first inequality in Theorem \ref{thm:FER} is tight in the sense that if the selection rule selects only among least hypotheses and the least hypotheses are disjoint, then the first inequality becomes an equality.
            \end{rmk}

            We can recover the classical False Discovery Rate as a special case of the False Evidence Rate, if we restrict ourselves to $\{0, 1/\alpha\}$-valued E-kernels and use a particular selection rule, as we show in Remark \ref{rmk:recover_FDR}.
            
            \begin{rmk}[Recovering the FDR]\label{rmk:recover_FDR}
                Suppose that our E-kernel is $\{0, 1/\alpha\}$-valued and consider the selection rule $S_\alpha(x) := \{H \in \H : \e(H \mid x) \geq 1/\alpha\}$ that selects hypotheses that are rejected at level $\alpha$.
                Then,
                \begin{align*}
                    \textnormal{FEP}_P(x)
                        = \frac{1}{\alpha} \frac{|S_\alpha(x) \cap \{H : H \ni P\}|}{|S_\alpha(x)| \vee 1}
                        = \frac{1}{\alpha} \textnormal{FSP}_P^{S_\alpha}(x).
                \end{align*}
                As a consequence, false evidence rate control recovers the classical false discovery rate control at level $\alpha$ in this binary setting.

                Since the False Discovery Rate relies on a single selection rule, Corollary \ref{cor:one_selection_rule} shows we can get away with something weaker than hypothesis-wise validity.
                This matches the finding of \citet{ignatiadis2024asymptotic} that `compound E-values' suffice for FDR-control.
                We explore this further in Remark \ref{rmk:compound}.
            \end{rmk}
            
        \subsection{Post-processing E-kernels and (closed) E-BH}
            \begin{figure}[t]
                \centering
                \begin{tikzpicture}[scale=1.05, every node/.style={align=center}]
                    \draw[rounded corners=4pt, thick] (-4.4,-3.6) rectangle (4.4,3.3);
                    \node[anchor=north west] at (-4.25,3.1) {$\P$};
            
                    \def\r{2.05}
                    \coordinate (C1) at (-1.45,0.75);
                    \coordinate (C2) at ( 1.45,0.75);
                    \coordinate (C3) at ( 0.00,-1.20);
            
                    % balanced fills: strong enough to see, weak enough not to swamp overlaps
                    \fill[blue!18, opacity=0.24] (C1) circle (\r);
                    \fill[red!18,  opacity=0.24] (C2) circle (\r);
                    \fill[teal!10, opacity=0.10] (C3) circle (\r);
            
                    % outlines carry most of the color identity
                    \draw[thick, blue!60!black] (C1) circle (\r);
                    \draw[thick, red!60!black]  (C2) circle (\r);
                    \draw[thick, teal!55!black] (C3) circle (\r);
            
                    % labels for the baseline hypotheses
                    \node[text=blue!50!black] at (-2.95, 2.55) {$G_1$};
                    \node[text=red!55!black]  at ( 2.95, 2.55) {$G_2$};
                    \node[text=teal!55!black] at (-1.55,-3.00) {$G_3$};
            
                    % least hypotheses + values
                    \node at (-2.45, 1.35) {$H_1$\\[-1mm] \footnotesize $60$};
                    \node at ( 2.45, 1.35) {$H_2$\\[-1mm] \footnotesize $29$};
                    \node at ( 0.00,-2.55) {$H_3$\\[-1mm] \footnotesize $11$};
            
                    \node at ( 0.00, 1.40) {$H_{12}$\\[-1mm] \footnotesize $70$};
                    \node at (-1.12,-0.62) {$H_{13}$\\[-1mm] \footnotesize $65$};
                    \node at ( 1.12,-0.62) {$H_{23}$\\[-1mm] \footnotesize $40$};
            
                    \node at (0.00,0.12) {$H_{123}$\\[-1mm] \footnotesize $100$};
            
                    \node at (3.35,-2.45) {$H_C$\\[-1mm] \footnotesize $5$};
                \end{tikzpicture}
                \caption{
                    Toy example based on the baseline family of hypotheses $G_1, G_2, G_3$.
                    The eight cells are the least hypotheses $H_1, H_2, H_3, H_{12}, H_{13}, H_{23}, H_{123}, H_C$.
                    The numbers displayed in the cells are the least-hypothesis E-values $\e(H_P)$.
                }
                \label{fig:venn-emeasure-cells}
            \end{figure}
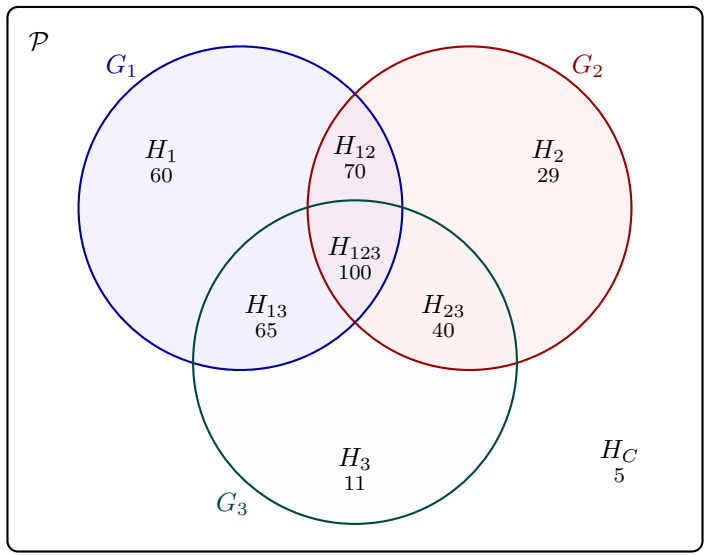
        
            Popular E-value-based multiple testing methods such as the E-BH procedure \citep{wang2022false} and its recent `closed E-BH' improvement \citep{xu2025bringing} effectively post-process an E-kernel into a level-$\alpha$ rejection set.
            We view rejection sets as binary ($\{0, 1/\alpha\}$-valued) E-measures.
            As such, these methods are implicitly converting E-kernels into other E-kernels.
            This conversion is generally lossy, discarding evidence.

            Our approach throughout is of course to directly view the output of the E-kernel as our expression of evidence.
            To showcase this, we compare our E-kernel approach to the E-BH approaches in a toy example.
            In particular, we consider a hypothesis class $\H$ generated by the partition $\{H_1, H_2, H_3, H_{12}, H_{13}, H_{23}, H_{123}, H_C\}$.
            The cells of the partition form the least hypotheses of $\H$.
            We display this hypothesis class in Figure \ref{fig:venn-emeasure-cells}, along with the value of the E-kernel on every cell (the E-kernel density).
            
            For the comparison to the E-BH methods, we think of the hypothesis class $\H$ as the intersections of a special subset of hypotheses $\G = \{G_1, G_2, G_3\}$, defined as $G_1 = H_1 \cup H_{12} \cup H_{13} \cup H_{123}$, $G_2 = H_2 \cup H_{12} \cup H_{23} \cup H_{123}$ and $G_3 = H_3 \cup H_{13} \cup H_{23} \cup H_{123}$.
            We represent $\G$ by the overlapping circles in Figure \ref{fig:venn-emeasure-cells}.

            In the first numerical column of Table \ref{tab:self-consistent-vs-ebh}, we display the E-kernel output over the hypothesis class and over $\G$, as induced by the Closure axiom.
            By Corollary \ref{cor:uniform_selection_rule}, this controls the FER uniformly over all selection rules.

            For a given selection rule $S$, Theorem \ref{thm:FER} shows that FER control only constrains the product $\textnormal{FSP}_P^S \e(H_P)$.
            Hence, we may inflate $\e(H_P)$ by a factor of $1/\textnormal{FSP}_P^S$ while preserving FER control under $S$.
            This means we can post-process a hypothesis-wise valid E-kernel into a FER-valid E-kernel under $S$:
            \begin{align*}
                \e^{S}(H_P)
                    := \e(H_P) / \textnormal{FSP}_P^S.
            \end{align*}
            The value at the remaining hypothesis in $\H$ is obtained by using the closure axiom of the E-measure.

            To showcase the post-processing, we consider a self-referential selection rule as in Remark \ref{rmk:recover_FDR}.
            For the purpose of the example, we constrain the selection rule to the subset of hypotheses $\G$.
            To define an explicit self-referential selection rule, we first define the rejection set for a given selection:
            \begin{align*}
                T(S)
                    := \{G \in \G : \e^S(G) \geq 1 / \alpha\}.
            \end{align*}
            We consider the rule that returns a largest such self-referential selection set,
            \begin{align*}
                S^* \in \argmax\{|S| : S \subseteq \G, S = T(S)\},
            \end{align*}
            which may not be unique.
            
            In our toy example, this selection rule happens to yield a unique largest selection at $\alpha = 0.05$: $S^* = \{G_1, G_2, G_3\}$.
            We show the resulting post-processed E-kernel density in the second numerical column of Table \ref{tab:self-consistent-vs-ebh}.
            
            The final two columns show the output of the E-BH and closed E-BH at $\alpha = 0.05$, also based on post-processing the original E-kernel from the first numerical column.
            Compared to the $S^*$-based E-kernel, both the E-BH and closed E-BH can be viewed as discarding a large amount of evidence.
            In fact, the E-BH is even dominated by the original E-kernel.

            \begin{table}[t]
                \centering
                \footnotesize
                \begin{tabular}{lcrrrr}
                \toprule
                Hypothesis
                  & $\e$
                  & $\e^{S^*}$
                  & $\mathrm{FSP}^{S^*}$
                  & E-BH
                  & closed E-BH \\
                \midrule
                $H_C$     & $5$   & $\infty$ & $0$   & $0$  & $0$  \\
                $H_1$     & $60$  & $180$    & $1/3$ & $20$ & $20$ \\
                $H_2$     & $29$  & $87$     & $1/3$ & $0$  & $20$ \\
                $H_3$     & $11$  & $33$     & $1/3$ & $0$  & $20$ \\
                $H_{12}$  & $70$  & $105$    & $2/3$ & $20$ & $20$ \\
                $H_{13}$  & $65$  & $97.5$   & $2/3$ & $20$ & $20$ \\
                $H_{23}$  & $40$  & $60$     & $2/3$ & $0$  & $20$ \\
                $H_{123}$ & $100$ & $100$    & $1$   & $20$ & $20$ \\
                \midrule
                $G_1$     & $60$  & $97.5$   & --    & $20$ & $20$ \\
                $G_2$     & $29$  & $60$     & --    & $0$  & $20$ \\
                $G_3$     & $11$  & $33$     & --    & $0$  & $20$ \\
                \bottomrule
                \end{tabular}
                \caption{Toy comparison of FER control. The first numerical column is the original hypothesis-wise valid E-kernel density $\e$ from Figure \ref{fig:venn-emeasure-cells}, valid uniformly over all selection rules. The second reports an E-kernel derived from post-processing $\e$ for the greatest self-consistent selected family $S^* \subseteq \G$ at $\alpha = 0.05$. The third captures the associated FSP. The final two columns show the E-kernel representations of E-BH and closed E-BH at $\alpha = 0.05$.}
                \label{tab:self-consistent-vs-ebh}
            \end{table}

            \begin{rmk}[Related work]
                Starting from only a valid E-variable for each of the hypotheses $G_1, G_2, G_3$, we can complete a valid E-kernel on $\H$ by assigning weighted averages of these E-values to the intersections.
                This is suggested in \citet{xu2025bringing} who take such hypotheses $G_1, G_2, G_3$ as a starting point.

                The discussion here is related to the post-hoc choice of the rejection set and the post-hoc choice of $\alpha$ discussed by \citet{xu2025bringing}.
                Perhaps the key conceptual difference is that instead of converting an E-kernel into rejection sets, we advocate for viewing the E-kernel itself as the statistical output.
            \end{rmk}

            \begin{rmk}[Compound E-variables]\label{rmk:compound}
                In this example, compound validity of E-variables \citep{ignatiadis2024asymptotic} corresponds to FER control under the fixed selection rule $S^{\textnormal{all}}(x) = \G$, for every $x \in \X$.
                Indeed, the FEP then becomes
                \begin{align*}
                    \textnormal{FEP}_P^{S^{\textnormal{all}}}(x)
                        = 1/|\G| \sum_{G \in \G : G \ni P} \e(G \mid x),
                \end{align*}
                so that FER control becomes $\sum_{G \in \G: G\ni P} \Ex^P\left[\e(G \mid X)\right] \leq |\G|$, for every $P \in \P$.
                This is exactly the definition of a collection of compoundly valid E-variables for $\G$.
            \end{rmk}

        \subsection{General multiplicity control and \citet{xu2025bringing}}\label{sec:general_multiplicity}
            We now consider a more general notion of multiplicity control for E-kernels, which nests both the FER and Familywise Evidence.
            For this purpose, we use a collection of functions $\Phi_P : [0, \infty]^\H \to [0, \infty]$, $P \in \P$.
            Here, $\Phi_P(\e)$ can be interpreted as the disutility of evidence emitted by $\e$ in case $P$ is the truth.

            \begin{dfn}[General validity]\label{dfn:general_validity}                
                We say that an E-function kernel $\e$ is valid under $(\Phi_P)_{P \in \P}$ if for every $P \in \P$,
                \begin{align*}
                    \Ex^P\left[\Phi_P(\e(\,\cdot \mid X))\right] \leq 1.
                \end{align*}
            \end{dfn}

            Under some mild assumptions on $\Phi_P$, we obtain a clean result that nests both our Familywise Evidence control and the FER results.
            Locality is the natural assumption that our disutility $\Phi_P$ only concerns false evidence: evidence assigned to true hypotheses under $P$.

            \begin{thm}\label{thm:general_multiplicity}
                Suppose $(\P, \H)$ is intersection-closed.
                Assume $\Phi_P$ is:
                \begin{itemize}
                    \item Local. $\Phi_P(\eta) = \Phi_P(\eta 1_P)$, where $1_P(H) := \ind{P \in H}$, $H \in \H$,
                    \item Positively homogeneous. $\Phi_P(c \eta) = c \Phi_P(\eta)$, $c \geq 0$,
                    \item Monotonic. $\eta \geq \eta'$ implies $\Phi_P(\eta) \geq \Phi_P(\eta')$, for every $\eta, \eta' \in [0, \infty]^\H$.
                \end{itemize}
                Then, for every E-capacity $\e$ for $\H$, we have
                \begin{align*}
                    \Phi_P(\e) \leq \e(H_P) \Phi_P(1_P),
                    \quad \textnormal{for every } P \in \P.
                \end{align*}
            \end{thm}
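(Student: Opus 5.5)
Fix $P \in \P$ and an E-capacity $\e$ for $\H$; view $\e$ as an element of $[0,\infty]^\H$. The plan is to dominate $\e 1_P$ pointwise by the constant multiple $\e(H_P) 1_P$, and then apply the three assumptions on $\Phi_P$ in turn. By Lemma \ref{lem:intersection_closed}, intersection-closure gives the least hypothesis $H_P \in \H$ of $P$.

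The key pointwise bound is $\e(H)\ind{P \in H} \leq \e(H_P)\ind{P \in H}$ for every $H \in \H$. When $P \notin H$, both sides are $0$. When $P \in H$, the definition of a least hypothesis gives $H_P \subseteq H$. Antitonicity of the E-capacity then yields $\e(H) \leq \e(H_P)$. Hence $\e 1_P \leq \e(H_P) 1_P$ as functions on $\H$.

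The chain of (in)equalities is
\begin{align*}
    \Phi_P(\e) = \Phi_P(\e 1_P) \leq \Phi_P(\e(H_P) 1_P) = \e(H_P) \Phi_P(1_P).
\end{align*}
The first step uses locality, the second monotonicity together with the pointwise bound, and the third positive homogeneity with $c = \e(H_P) \geq 0$.

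The main point needing care is the infinity conventions. $\e(H_P)$ may equal $\infty$, and then positive homogeneity must be read with $c = \infty$ under the paper's conventions, including $\infty \times 0 = 0$. If the homogeneity assumption is only meant for finite $c$, I would treat $\e(H_P) = \infty$ separately. If $\Phi_P(1_P) > 0$, the right-hand side is $\infty$ and the bound is trivial. If $\Phi_P(1_P) = 0$, homogeneity gives $\Phi_P(c 1_P) = 0$ for every finite $c$. Monotonicity then caps $\Phi_P(\e 1_P)$ only when $\e 1_P$ is finite, so here I would appeal to the convention that homogeneity holds for all $c \in [0,\infty]$, which matches the stated $c \geq 0$.

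A similar convention issue arises with the products $\e(H)\ind{P \in H}$ when $\e(H) = \infty$ and $P \notin H$. The convention $\infty \times 0 = 0$ makes these products $0$, so the pointwise bound in that case still holds. Apart from these two checks, the argument is routine.
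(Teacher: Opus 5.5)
Your proposal is correct and follows the paper's proof step for step: the pointwise bound $\e 1_P \leq \e(H_P) 1_P$ from leastness of $H_P$ and antitonicity, then locality, monotonicity and positive homogeneity. Your remarks on infinity conventions make explicit what the paper uses silently, namely that homogeneity is applied with $c = \e(H_P)$, which may equal $\infty$, under $\infty \times 0 = 0$.
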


            \begin{cor}\label{cor:general_multiplicity}
                Suppose $(\P, \H)$ is intersection-closed.
                Let $\e$ be an E-capacity kernel for $\H$.
                If for every $P \in \P$,
                \begin{align*}
                    \Ex^P\left[\e(H_P \mid X)\Phi_P(1_P)\right] \leq 1,
                \end{align*}
                then $\e$ is valid under $(\Phi_P)_{P \in \P}$.
            \end{cor}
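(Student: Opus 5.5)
The corollary is obtained by applying Theorem \ref{thm:general_multiplicity} pointwise in $x$ and then integrating. Fix $P \in \P$ and $x \in \X$. Since $\e$ is an E-capacity kernel, the map $H \mapsto \e(H \mid x)$ is an E-capacity for $\H$. Intersection-closure of $(\P, \H)$ is assumed, and $\Phi_P$ is local, positively homogeneous and monotonic. Theorem \ref{thm:general_multiplicity} therefore applies to $\eta = \e(\,\cdot \mid x)$ and gives the deterministic inequality
\begin{align*}
    \Phi_P(\e(\,\cdot \mid x)) \leq \e(H_P \mid x)\, \Phi_P(1_P), \quad \textnormal{for every } x \in \X.
\end{align*}
Here $H_P$ is the least hypothesis from Lemma \ref{lem:intersection_closed}.

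Next, I take expectations under $P$ on both sides. Monotonicity of the integral for $[0,\infty]$-valued functions then gives
\begin{align*}
    \Ex^P\left[\Phi_P(\e(\,\cdot \mid X))\right] \leq \Ex^P\left[\e(H_P \mid X)\Phi_P(1_P)\right] \leq 1,
\end{align*}
where the last step is the hypothesis of the corollary. This holds for every $P \in \P$, which is exactly validity under $(\Phi_P)_{P \in \P}$ in the sense of Definition \ref{dfn:general_validity}.

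The only delicate point is measurability. The left-hand expectation presupposes that $x \mapsto \Phi_P(\e(\,\cdot \mid X=x))$ is $\Sigma$-measurable. I would handle this as the paper does elsewhere, by taking this measurability as implicit in Definition \ref{dfn:general_validity}. If it fails, the same argument goes through with an outer expectation, since the inequality is pointwise. The right-hand side needs no extra assumption: $\Phi_P(1_P)$ is a constant in $[0,\infty]$ and $x \mapsto \e(H_P \mid x)$ is measurable by the definition of an E-kernel. The conventions $0 \times \infty = 0$ make the product well defined. This is why I expect no real obstacle beyond checking that these conventions are compatible with positive homogeneity at $c = 0$ and $c = \infty$, which the Theorem already presupposes.
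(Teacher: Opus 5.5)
Your proposal is correct and matches the route the paper intends. The paper writes out no separate proof for this corollary; it follows from Theorem \ref{thm:general_multiplicity} exactly as you argue. You apply that theorem to the E-capacity $\e(\,\cdot \mid x)$ for each fixed $x$, take $\Ex^P$-expectations, and bound the result by the hypothesis, which is sound. Your remark on measurability, which the paper leaves implicit in Definition \ref{dfn:general_validity}, is a reasonable addition.
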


            \begin{exm}[Recovering Familywise evidence control]
                Familywise evidence control is recovered by choosing
                \begin{align*}
                    \Phi_P(\eta) := \sup_{H \ni P}\eta(H).
                \end{align*}
                This is monotonic, positively homogeneous and local.
                Moreover, $\Phi_P(1_P)=1$ for every $P \in \P$.
                Hence Theorem \ref{thm:general_multiplicity} gives $ \sup_{H \ni P}\e(H \mid x) \leq \e(H_P \mid x)$, for every $P \in \P$.
                As $H_P \ni P$ this holds with equality, so that Corollary \ref{cor:general_multiplicity} recovers Theorem \ref{thm:familywise}. 
            \end{exm}

            \begin{exm}[Recovering FER]
                Let $A \subseteq \H$ be finite, and define
                \begin{align*}
                    \Phi_P^A(\eta)
                        := \frac{1}{|A| \vee 1} \sum_{H \in A : H \ni P} \eta(H),
                \end{align*}
                which matches the false evidence proportion under selection $A$, and is monotonic, positively homogeneous and local.
                Moreover,
                \begin{align*}
                    \Phi_P^A(1_P)
                        = \frac{|A \cap \{H : H \ni P\}|}{|A| \vee 1},
                \end{align*}
                which is the corresponding false selection proportion.
                Theorem \ref{thm:general_multiplicity} then yields Theorem \ref{thm:FER}: $\textnormal{FEP}_P^A(x) \leq \textnormal{FSP}_P^A(x)\e(H_P \mid x)$.

                The uniform FER result can be recovered by choosing $\Phi_P = \sup_A \Phi_P^A$.
            \end{exm}

            \begin{rmk}[Relationship to \citet{xu2025bringing}]
                Let $\R_\alpha(x) \subseteq \H$ be a rejection set.
                We encode this rejection set as a $\{0,1/\alpha\}$-valued E-kernel
                \begin{align*}
                    \e(H \mid x)
                        := \frac{1}{\alpha}\ind{H \in \R_\alpha(x)},
                        \quad H \in \H.
                \end{align*}
                Following \citet{xu2025bringing}, let $f_H : \H \to [0, \infty]$ for every $H \in \H$.
                Define
                \begin{align*}
                    \Phi_P(\eta) := \sup_{S \in \H} f_{H_P}(S) \eta(S),
                \end{align*}
                which is both positively homogeneous and monotonic.
                Then
                \begin{align*}
                    \Phi_P(\e(\,\cdot \mid x))
                        = \sup_{R \in \R_\alpha(x)} f_{H_P}(R) \e(R \mid x)
                        = \sup_{R \in \R_\alpha(x)} \frac{f_{H_P}(R)}{\alpha},
                \end{align*}
                since $\e(R \mid x) = 0$ for $R \not\in \R_\alpha(x)$.
                Validity of $\e$ under $(\Phi_P)$ then becomes
                \begin{align*}
                    \Ex^P\left[\sup_{R \in \R_\alpha(X)} f_{H_P}(R)\right] \leq \alpha,
                    \quad P \in \P,
                \end{align*}
                which is exactly the expected-loss control studied by \citet{xu2025bringing}.

                In our notation, we believe their e-Closure result can be viewed as the statement that $\e$ is valid under $(\Phi_P)$ if and only if there exists another E-kernel $\e'$ that is valid and satisfies
                \begin{align*}
                    \e'(H_P \mid x)  \geq \Phi_P(\e(\cdot \mid x)), \quad \textnormal{ for every } P \in \P,\ x \in \X.
                \end{align*}

                The missing ingredient to apply Theorem \ref{thm:general_multiplicity} and Corollary \ref{cor:general_multiplicity} is locality of $\Phi_P$, which in this representation is recovered by assuming $f_{H_P}(A) = 0$ if $A \not\supseteq H_P$.
            \end{rmk}
    \section{Updating E-measures}
        \subsection{From E-prior to E-posterior}\label{sec:E-posterior}
            For a single hypothesis $H$, a well-known `updating' property of E-variables is that the product $\e_2(H \mid x) = \e_0(H) \e_1(H \mid x)$ of an E-value $\e_0(H)$ and a valid E-variable $x \mapsto \e_1(H \mid x)$ satisfies
            \begin{align*}
                \Ex^P[\e_2(H \mid X)] \leq \e_0(H).
            \end{align*}
            In this section, we generalize this updating rule to E-measures and E-capacities.
            Akin to the Bayesian setting, this allows us to update from `E-prior' to `E-posterior'.
    
            \subsubsection{Updating E-capacities by multiplication}
                We start with an updating rule for E-capacities.
                This updating rule is nothing more than simultaneously updating the individual E-values for each hypothesis.
        
                \begin{prp}[Validity of the E-posterior]\label{prp:E-posterior}
                    Let $\e_0$ be a (prior) E-capacity and $\e_1$ a valid E-capacity kernel.
                    Then, their E-posterior $\e_2(H \mid x) := (\e_0 \times \e_1(\cdot \mid x))(H)$ is an E-capacity kernel that satisfies
                    \begin{align}\label{ineq:validity_E-prior}
                        \sup_{P \in H} \Ex^P[\e_2(H \mid X)] \leq \e_0(H), \quad \textnormal{ for every } H \in \H^\emptyset.
                    \end{align}
                \end{prp}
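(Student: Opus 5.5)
The plan is to verify the three defining requirements of an E-capacity kernel for $\e_2$ pointwise, and then obtain the validity bound \eqref{ineq:validity_E-prior} from linearity of expectation. Throughout, I read the product $(\e_0 \times \e_1(\cdot \mid x))(H)$ hypothesis-wise as $\e_0(H)\,\e_1(H \mid x)$, with the paper's conventions $0 \times \infty = \infty \times 0 = 0$.

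I will check the kernel properties in the following order.

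\emph{Measurability.} Fix $H \in \H$. Since $\e_0(H)$ is a constant in $[0, \infty]$ and $x \mapsto \e_1(H \mid x)$ is $\Sigma$-measurable, the map $x \mapsto \e_0(H)\e_1(H \mid x)$ is $\Sigma$-measurable. The cases $\e_0(H) \in \{0, \infty\}$ still give measurable maps, because the preimages are built from the sets $\{\e_1(H \mid x) = 0\}$ and their complements.

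\emph{Antitonicity.} If $H \subseteq H'$, then $\e_0(H') \leq \e_0(H)$ and $\e_1(H' \mid x) \leq \e_1(H \mid x)$. A product of two antitone maps with values in $[0,\infty]$ is antitone, so $\e_2(H' \mid x) \leq \e_2(H \mid x)$.

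\emph{Impossibility axiom.} We need $\e_2(\emptyset \mid x) = \infty \cdot \infty = \infty$. This holds since both $\e_0(\emptyset)$ and $\e_1(\emptyset \mid x)$ equal $\infty$.

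For the validity bound, fix $H \in \H^\emptyset$ and $P \in H$, and first take $\e_0(H) < \infty$. Pulling out the constant gives
\[
\Ex^P[\e_2(H \mid X)] = \e_0(H)\,\Ex^P[\e_1(H \mid X)] \leq \e_0(H),
\]
using the hypothesis-wise validity of $\e_1$ at $H$. When $\e_0(H) = 0$, the integrand is identically $0$ by the convention, and the bound holds trivially. Taking the supremum over $P \in H$ then gives \eqref{ineq:validity_E-prior}.

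The only genuine obstacle is the infinite case $\e_0(H) = \infty$. There the right-hand side of \eqref{ineq:validity_E-prior} is $\infty$, so the inequality holds trivially. The point to be careful about is that the pointwise product convention does not cause trouble: $\e_2(H \mid x)$ is then $\infty$ on $\{\e_1(H \mid x) > 0\}$ and $0$ elsewhere, which is still measurable. I would state this case split explicitly and otherwise treat everything as routine.
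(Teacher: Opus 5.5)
Your proposal is correct and follows essentially the same route as the paper. Both check measurability of the hypothesis-wise product $x \mapsto \e_0(H)\e_1(H \mid x)$, obtain the E-capacity property from the product of two E-capacities, and get validity by pulling the constant $\e_0(H)$ out of the expectation. Your explicit case split on $\e_0(H) \in \{0, \infty\}$ is extra care that the paper leaves implicit, since $\Ex^P[c\,Y] = c\,\Ex^P[Y]$ already holds for every $c \in [0,\infty]$ under the convention $0 \times \infty = 0$.
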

        
                \begin{rmk}
                    The condition \eqref{ineq:validity_E-prior} can be viewed as validity relative to an E-prior.
                    Standard validity for an E-kernel can be viewed as validity relative to the prior 1-E-measure $\mathbf{1}$, featured in Definition \ref{dfn:1-E-measure} in Appendix \ref{sec:E-integration}.
                \end{rmk}
    
                \begin{rmk}[E-measure-martingale]
                    For a single hypothesis $H$, sequential raw multiplication results in a `test martingale'.
                    Raw multiplication of E-kernels therefore corresponds to a kind of E-measure-martingale.
                \end{rmk}
                
            \subsubsection{Updating E-measures by closed multiplication}
                Unfortunately, the (raw) hypothesis-wise product of two E-measures need not be an E-measure; it is only guaranteed to be an E-capacity.
                Taking the closure of the product produces an E-measure, but the resulting E-measure may not be valid in the sense of \eqref{ineq:validity_E-prior}.
                
                Luckily, we can recover a weaker notion of validity if the hypothesis class is intersection-closed.
                By taking the closure, this updating rule is more than a simple hypothesis-wise update of the e-values.
    
                \begin{prp}[Validity of the closed E-posterior]\label{prp:closed_E-posterior}
                    Suppose $(\P, \H)$ is intersection-closed, with an at most countable collection $\{H_P : P \in \P\}$ of least hypotheses.
                    Let $\e_0$ be a (prior) E-capacity and $\e_1$ a valid E-capacity kernel.
                    Then, their closed E-posterior $\e_2(H \mid x) := \overline{(\e_0 \times \e_1(\cdot \mid x))}(H)$ is an E-measure kernel that satisfies
                    \begin{align*}
                       \Ex^P[\e_2(H \mid X)] \leq \e_0(H_P), \quad \textnormal{ for every } P \in H, H \in \H^\emptyset.
                    \end{align*}
                \end{prp}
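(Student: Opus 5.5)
The plan is to reduce to the raw E-posterior of Proposition \ref{prp:E-posterior} and then use the fact that closure does not change the values on least hypotheses. For each fixed $x$, the map $H \mapsto \e_0(H)\e_1(H \mid x)$ is a product of two antitone $[0,\infty]$-valued maps, and its value at $\emptyset$ is $\infty \cdot \infty = \infty$. So it is an E-capacity; this is already contained in Proposition \ref{prp:E-posterior}, which says that the raw product $\e_0 \times \e_1(\cdot \mid x)$ is an E-capacity kernel. Measurability in $x$ holds because $\e_0(H)$ is a constant multiplying a measurable function, under the paper's $0\times\infty$ conventions.

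The first step is to apply Theorem \ref{thm:closure_pointwise_validity}, or more precisely its first part. That part uses only intersection-closure and the countability of the least hypotheses. It shows that the closure $\e_2$ of this E-capacity kernel is an E-measure kernel, with measurability coming from $\e_2(H\mid x) = \inf_{P \in H} \e_0(H_P)\e_1(H_P \mid x)$, a countable infimum by Theorem \ref{thm:closure_least_true_hypothesis}.

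For the bound, I fix $H \in \H^\emptyset$ and $P \in H$. Since $\e_2(\cdot\mid x)$ is an E-measure and hence an E-capacity, and $H_P \subseteq H$, we have $\e_2(H \mid x) \le \e_2(H_P \mid x)$. By Corollary \ref{cor:closure_least_true}, $\e_2(H_P\mid x) = \e_0(H_P)\e_1(H_P \mid x)$. Taking expectations gives
\begin{align*}
\Ex^P[\e_2(H\mid X)] \le \e_0(H_P)\,\Ex^P[\e_1(H_P\mid X)] \le \e_0(H_P),
\end{align*}
where the last step uses validity of $\e_1$ at $H_P \ni P$. Note that $H_P \neq \emptyset$ because it contains $P$.

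The main obstacle is the edge case $\e_0(H_P) = \infty$, since the product then depends on the $0\times\infty$ convention. If $\e_0(H_P)=\infty$, the bound is trivial. Otherwise $\e_0(H_P)$ is a finite constant and can be pulled out of the expectation directly, so no real difficulty arises beyond checking the conventions.
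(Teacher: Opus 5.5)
Your proposal is correct and follows essentially the same route as the paper. You close the raw product using Theorem \ref{thm:closure_least_true_hypothesis}, which gives the countable-infimum representation and hence the E-measure kernel property and measurability. You then bound $\e_2(H \mid x) \le \e_0(H_P)\e_1(H_P \mid x)$ and use validity of $\e_1$ at $H_P \ni P$. The only cosmetic difference is how that bound is obtained: the paper reads it off directly from $P$ being an index in the infimum, while you get it from antitonicity plus Corollary \ref{cor:closure_least_true}, exactly as in the proof of Theorem \ref{thm:closure_pointwise_validity}.
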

    
                \begin{rmk}
                    The closed multiplication procedure can be viewed as performing raw multiplication for the least hypotheses $H_P$, and subsequently using closure to extend to all hypotheses $H \in \H$.
                    This extends an observation of \citet{koning2026anytime}, that for a single composite hypothesis $H$ it is better to track an E-process against every singleton hypothesis $\{P\}$, $P \in H$, than a single E-process against $H$.
                \end{rmk}
                
        \subsection{E-measure processes}
            We can generalize to processes of E-measures. 
            Such processes can be viewed as a generalization of E-processes \citep{grunwald2024safe, ramdas2022admissible} to E-measures.
    
            \begin{dfn}[E-measure-process]
                Let $(\F_t)_{t \geq 0}$ with $\F_t \subseteq \Sigma$ be a filtration on $(\X, \Sigma)$, $t \in \bN$.
                We say that $(\e)_{t \geq 0}$ is an E-measure-process if $H \mapsto \e_t(H \mid x)$ is an E-measure for every $t \geq 0$ and $x$, and $x \mapsto \e_t(H \mid x)$ is $\F_t$-measurable for every $t \geq 0$ and $H$.
            \end{dfn}
    
            \begin{dfn}[Anytime validity]
                An E-measure-process is anytime valid for $\H$ if, for every stopping time $\tau$ adapted to $(\F_t)_{t \geq 0}$, the stopped kernel $\e_{\tau}(H \mid x) := \e_{\tau(x)}(H \mid x)$ is a valid E-kernel.
            \end{dfn}
    
            Theorem \ref{thm:closing_E-process} produces another instantiation of Theorem \ref{thm:closure_least_true_hypothesis}.
            It shows that any anytime valid E-capacity-process is dominated by an anytime valid E-measure-process.
            
            \begin{thm}[Closing E-capacity processes]\label{thm:closing_E-process}
                Suppose $(\P, \H)$ is intersection-closed with an at most countable collection of least hypotheses $\{H_P, P \in \P\}$.
                Let $(\e)_{t \geq 0}$ be an E-capacity process.
                Then, its closure
                \begin{align*}
                    \overline{\e}_t(H \mid x)
                        := \inf_{P \in H} \e_t(H_P \mid x), \quad H \in \H,
                \end{align*}
                defines an E-measure process that dominates $(\e_t)_{t \geq 0}$.
                Moreover, it is anytime valid if and only if $(\e_t)_{t \geq 0}$ is anytime valid.
            \end{thm}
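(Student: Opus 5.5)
The plan mirrors the proof of Theorem \ref{thm:closure_pointwise_validity}, applied separately at each time $t$ and then to the stopped kernels. Fix $t \geq 0$ and $x \in \X$. Since $H \mapsto \e_t(H \mid x)$ is an E-capacity, Theorem \ref{thm:closure_least_true_hypothesis} identifies the displayed formula $\inf_{P \in H} \e_t(H_P \mid x)$ with the closure of Definition \ref{dfn:closure}. Proposition \ref{prp:induced_E-measure} then shows that $H \mapsto \overline{\e}_t(H \mid x)$ is an E-measure. Theorem \ref{thm:smallest_dominating_E-measure} shows that it dominates $\e_t(\cdot \mid x)$. For adaptedness, I would fix $H$ and note that $\overline{\e}_t(H \mid \cdot)$ is an infimum over the at most countable family $\{\e_t(H_P \mid \cdot) : P \in H\}$ of $\F_t$-measurable maps, so it is $\F_t$-measurable. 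Hence $(\overline{\e}_t)_{t \geq 0}$ is an E-measure process dominating $(\e_t)_{t\ge0}$.

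For the validity equivalence, I fix a stopping time $\tau$ and compare the stopped objects. The key pointwise identity I want is that stopping commutes with closure:
\begin{align*}
    \overline{\e}_{\tau}(H \mid x) = \overline{\e}_{\tau(x)}(H \mid x) = \inf_{P \in H} \e_{\tau(x)}(H_P \mid x) = \inf_{P \in H} \e_\tau(H_P \mid x).
\end{align*}
This holds trivially because the closure is defined timewise and evaluated at the single time $\tau(x)$. Consequently the stopped closure $\overline{\e}_\tau$ is exactly the closure of the stopped E-capacity kernel $\e_\tau$. Corollary \ref{cor:closure_least_true} also gives $\overline{\e}_\tau(H_P \mid x) = \e_\tau(H_P \mid x)$.

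For the forward direction, suppose $(\e_t)$ is anytime valid and fix $H \in \H^\emptyset$ and $P \in H$. Antitonicity of the E-measure $\overline{\e}_{\tau}(\cdot \mid x)$ gives $\overline{\e}_\tau(H \mid x) \leq \overline{\e}_\tau(H_P \mid x) = \e_\tau(H_P \mid x)$. Taking $\Ex^P$ and using validity of $\e_\tau$ at the hypothesis $H_P \ni P$ yields $\Ex^P[\overline{\e}_\tau(H \mid X)] \leq 1$. For the converse, domination gives $\e_\tau \leq \overline{\e}_\tau$ pointwise, so anytime validity transfers downward. In effect, once the identity above is in hand, this step reduces to applying Theorem \ref{thm:closure_pointwise_validity} to $\e_\tau$ for each stopping time $\tau$.

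The main obstacle is bookkeeping rather than substance. One point is measurability of the stopped kernel $x \mapsto \overline{\e}_{\tau(x)}(H \mid x)$. I would handle it by writing it as $\sum_t \ind{\tau = t}\,\overline{\e}_t(H \mid x)$, plus a limit or infinity term if $\tau = \infty$ is allowed, treated by the same convention the definition uses for $\e_\tau$. This is $\Sigma$-measurable because each $\overline{\e}_t(H \mid \cdot)$ is $\F_t \subseteq \Sigma$-measurable. The other point is checking that the stopped $\e_\tau$ is itself an E-capacity kernel, which holds since for each $x$ it is just $\e_{\tau(x)}(\cdot \mid x)$.
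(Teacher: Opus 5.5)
Your proposal is correct and takes the same route as the paper. The paper's proof is the one-line reduction to Theorem \ref{thm:closure_pointwise_validity} applied to each stopped kernel $\e_\tau$, and your identity that the stopped closure $\overline{\e}_\tau$ equals the closure of the stopped kernel $\e_\tau$ is what justifies that reduction. Your timewise $\F_t$-measurability check via the countable infimum, and your measurability argument for the stopped kernel, fill in details the paper leaves implicit.
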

            \begin{proof}
                The proof follows from applying Theorem \ref{thm:closure_pointwise_validity} to the E-kernels $\e_{\tau}$, for every stopping time $\tau$.
            \end{proof}
    
            \begin{rmk}[Relationship to \citet{ramdas2022admissible}]
                Theorem \ref{thm:closing_E-process} can be viewed as an E-measure version of the landmark result of \citet{ramdas2022admissible}, who show that any admissible anytime valid E-process for a $\P = H$ is the (measurable) infimum of anytime valid E-processes for the (least) hypotheses $\{P\}$, $P \in H$.
                Our result also shows that these can be replaced by the potentially smaller canonical cover $\{H_P, P \in H\}$ of $H$.
            \end{rmk}
            
            \begin{rmk}
                In the same way that valid E-kernels are a generalization of valid confidence sets, valid E-measure processes are a generalization of confidence sequences \citep{howard2021time, waudbysmith2023estimating}.
                Moreover, they are equivalent to post-hoc confidence sequences.
            \end{rmk}
        
    \section{Abstracting the notion of a `hypothesis'}\label{sec:abstract_hypotheses}
        For the sake of presentation, we have followed the convention in the statistics literature that a \emph{hypothesis} is a collection of probabilities.
        We now break this convention, and consider reporting evidence against more abstract notions of a hypothesis.
        Almost everything we developed generalizes: the only point at which we use the fact that $\H$ consists of sets of probabilities is in defining notions of validity.

        In particular, we can retain a model $\P$ and a sample space equipped with a $\sigma$-algebra $(\X, \Sigma)$, but replace the hypothesis space by $(\Y, \H)$, where $\Y$ is some other space.
        The only challenge is to characterize an appropriate notion of validity.
        
        \subsection{Predictive E-kernels}\label{sec:prediction}
            To showcase the abstraction of a hypothesis, we consider a prediction setting in which the hypothesis class $\H$ is also a $\sigma$-algebra satisfying $\H \supseteq \Sigma$.
            This means our hypothesis space is a measurable space $(\X, \H)$, equipped with a model $\P$.
            Here, the $\sigma$-algebra $\Sigma$ may be interpreted as the observed information, and $\H$ as a richer `full-information' $\sigma$-algebra.

            A hypothesis $H \in \H$ can be interpreted as a predictive claim about the `full outcome' $x^* \in \X$.
            The hypothesis $H$ is `true' at outcome $x$ if $x \in H$.
            We express our predictions about $x^*$ through evidence against the statement $x^* \in H$ for every hypothesis $H \in \H$, using a predictive E-kernel $\e : \H \times \X \to [0, \infty]$.

            The definition of predictive validity is analogous to familywise evidence control as in Section \ref{sec:familywise}, except that the supremum ranges over hypotheses that contain the realized outcome $X$ instead of $P$.
            
             \begin{dfn}[Predictive validity]
                A predictive E-kernel $\e$ is valid if
                \begin{align*}
                    \Ex^P\left[\sup_{H \in \H : H \ni X} \e(H \mid X)\right] \leq 1,\quad \textnormal{ for every } P \in \P.
                \end{align*}
            \end{dfn}

            \begin{rmk}
                Restricting to $\{0, 1/\alpha\}$-valued E-kernels recovers prediction sets analogously to the confidence sets from Section \ref{sec:confidence_sets}.
            \end{rmk}

            We automatically have that $(\X, \H)$ is intersection-closed, as it is both a $\sigma$-algebra and union closed.
            This suffices to show that the validity of a predictive E-kernel is determined by a single $\H$-measurable valid E-variable for $\P$.
            %See Definition \ref{dfn:measurable_function} for the relevant definition of $\H$-measurability.

            \begin{thm}[Single E-variable suffices]\label{thm:predictive_kernel_least_true}
                Suppose $(\X, \H)$ is intersection-closed, with least hypotheses $H_x$, $x \in \X$.
                Let $\e$ be a predictive E-capacity kernel.
                Then, for every $x \in \X$,
                \begin{align*}
                    \sup_{H \in \H : H \ni x} \e(H \mid x) 
                        = \e(H_x \mid x).
                \end{align*}
                
                As a consequence, $\e$ is valid if and only if $x \mapsto \e(H_x \mid x)$ is a valid $\H$-measurable E-variable for $\P$.
            \end{thm}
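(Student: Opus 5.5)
The proof follows the same pattern as the familywise argument behind Theorem \ref{thm:familywise}, with the point $P$ replaced by the outcome $x$.

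\emph{Step 1: the identity.} Fix $x \in \X$. By Lemma \ref{lem:intersection_closed}, intersection-closure of $(\X, \H)$ gives a least hypothesis $H_x \in \H$. It satisfies $x \in H_x$, and $H \supseteq H_x$ for every $H \in \H$ with $H \ni x$. Since $H \mapsto \e(H \mid x)$ is an E-capacity, it is antitone under inclusion. Hence $\e(H \mid x) \leq \e(H_x \mid x)$ for every $H \ni x$, and the supremum over $\{H \in \H : H \ni x\}$ is at most $\e(H_x \mid x)$. Because $H_x$ itself belongs to this index set, the supremum is attained and equals $\e(H_x \mid x)$. This is the displayed identity; it needs no measurability and holds pointwise in $x$.

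\emph{Step 2: validity.} Substituting the identity into the definition of predictive validity gives
\begin{align*}
\Ex^P\left[\sup_{H \in \H : H \ni X} \e(H \mid X)\right] = \Ex^P\left[\e(H_X \mid X)\right], \quad P \in \P.
\end{align*}
So $\e$ is predictively valid exactly when $x \mapsto \e(H_x \mid x)$ has $P$-expectation at most one for every $P \in \P$. That is the statement that this map is a valid E-variable for the hypothesis $\P$, and the equivalence follows in both directions from this single equality.

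\emph{Step 3: measurability, the main obstacle.} The delicate point is in what sense $x \mapsto \e(H_x \mid x)$ is measurable, since the expectation requires measurability with respect to a $\sigma$-algebra on which $P$ is defined. The statement calls the map $\H$-measurable, where $\H \supseteq \Sigma$ is the full-information $\sigma$-algebra, so I would interpret $P$ as acting on $\H$. I would then argue as follows. Whenever $x$ and $x'$ share the same least hypothesis, so $H_x = H_{x'}$, the value $\e(H_x \mid x)$ still depends on $x$ through the kernel's second argument. Since $x \mapsto \e(H \mid x)$ is $\Sigma$-measurable for each fixed $H$, I would write
\begin{align*}
\e(H_x \mid x) = \sum_{H} \ind{x \in H \setminus \textstyle\bigcup\{H' \in \H : H' \subsetneq H\}} \, \e(H \mid x),
\end{align*}
where the sum runs over the distinct least hypotheses and each indicator is the event that $H$ is the least hypothesis of $x$. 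Measurability then follows if that collection is countable, in line with the countability caveats elsewhere in the paper. Otherwise I would simply take measurability of this map as the implicit standing assumption that makes the expectation in predictive validity well-defined. In that case the measurability of the supremum is identical to that of $x \mapsto \e(H_x \mid x)$ by Step 1, so the assumption costs nothing beyond what the definition of predictive validity already requires.
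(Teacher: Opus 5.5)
Your Steps 1 and 2 coincide with the paper's proof. Antitonicity together with $x \in H_x$ makes the supremum attained at $H_x$, and the validity equivalence then follows immediately.

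The gap is Step 3. The statement asserts that $x \mapsto \e(H_x \mid x)$ is $\H$-measurable with no countability hypothesis, and the paper proves exactly that. You only obtain it when there are countably many least hypotheses, and otherwise you assume it, so the ``$\H$-measurable'' part of the conclusion is left unproved.

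The missing idea is that in the predictive setting $\H$ is both a $\sigma$-algebra containing $\Sigma$ and a hypothesis class. It is therefore closed under \emph{arbitrary} unions. Since the supremum in Step 1 is attained, for every $c \geq 0$
\begin{align*}
    \{x : \e(H_x \mid x) \geq c\}
        = \bigcup_{H \in \H} \left(H \cap \{x : \e(H \mid x) \geq c\}\right).
\end{align*}
Here $\{x : \e(H \mid x) \geq c\} \in \Sigma \subseteq \H$, its intersection with $H$ lies in $\H$, and the possibly uncountable union lies in $\H$ by union-closure.

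Your analogy with Theorem \ref{thm:closure_pointwise_validity} is therefore misplaced. There, measurability was needed with respect to $\Sigma$, which is only closed under countable operations; here the target is $\H$ itself. The same arbitrary-union argument would in fact have completed your own indicator representation.

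Your intuition that $\e(H_x \mid x)$ still varies with $x$ inside a least hypothesis is also wrong in this setting. Because $\H$ is a $\sigma$-algebra closed under arbitrary unions, the least hypotheses are disjoint atoms, and every set in $\H$ is a union of atoms. So every $\Sigma$-measurable, hence $\H$-measurable, map $x \mapsto \e(H \mid x)$ is constant on each atom. Consequently $x \mapsto \e(H_x \mid x)$ is constant on atoms, and its superlevel sets are unions of atoms. This gives a second short proof of $\H$-measurability.
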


            \begin{rmk}[Recovering E-kernel from E-variable?]
                The single E-variable $x \mapsto \e(H_x \mid x)$ is enough to characterize predictive validity, but not to reconstruct the full predictive E-kernel on $(\X, \H)$, even if we assume it is an E-measure kernel.
                Indeed, for a fixed outcome $x$, reconstructing $\e(H \mid x)$ requires the values $\e(H' \mid x)$ for all least hypotheses $H' \subseteq H$; not only the single value $\e(H_x \mid x)$.
            \end{rmk}

            \begin{rmk}[$\H = 2^\X$]
                In predictive inference, it is standard to (implicitly) consider a hypothesis class $\H = 2^\X$.
                Here, the least hypotheses are the singleton sets $H_x = \{x\}$, $x \in \X$.
                Theorem \ref{thm:predictive_kernel_least_true} then reduces to the statement that a predictive E-capacity kernel is valid if and only if $x \mapsto \e(\{x\} \mid x)$ is a valid $\H$-measurable E-variable for $\P$, which recovers one of the key insights of \citet{koning2025fuzzy}.
            \end{rmk}

        \subsection{Pushing forward E-kernels}
            A simple way to construct E-measures and E-kernels on a target space is to push them forward along a measurable map.

            Suppose we have two hypothesis spaces $(\Y, \H)$ and $(\Z, \G)$.
            Moreover, suppose we have measurable map $f : \Y \to \Z$, meaning that $f^{-1}(G) \in \H$, for every $G \in \G$.
            Given an E-measure $\e$ on $(\Y, \H)$, we can then define its pushforward E-measure $\e_f$ on $(\Z, \G)$ through
            \begin{align*}
                \e_f(G)
                    := \e(f^{-1}(G)), \quad G \in \G.
            \end{align*}

            \begin{lem}\label{lem:pushforward_E-measure}
                If $\e$ is an E-measure on $(\Y, \H)$ then $\e_f$ is an E-measure on $(\Z, \G)$.
            \end{lem}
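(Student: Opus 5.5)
The plan is to verify the single defining property of an E-measure for $\e_f$ directly, by transporting it through the preimage map $f^{-1}$. Two facts drive everything. First, preimages commute with arbitrary unions. Second, by measurability of $f$, every preimage $f^{-1}(G)$ with $G \in \G$ lies in $\H$. This second fact makes $\e_f$ well-defined as a map $\G \to [0,\infty]$ in the first place.

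Concretely, I would fix an arbitrary subset $\S \subseteq \G$ and define the image collection $\S_f := \{f^{-1}(G) : G \in \S\} \subseteq \H$. The key steps, in order, are the following.
\begin{itemize}
    \item Since $\G$ is union-closed, the union $\bigcup_{G \in \S} G$ lies in $\G$. Hence $\e_f$ is defined on it.
    \item Set-theoretically,
    \begin{align*}
        f^{-1}\left(\bigcup_{G \in \S} G\right) = \bigcup_{G \in \S} f^{-1}(G) = \bigcup_{H \in \S_f} H.
    \end{align*}
    \item Applying the E-measure property of $\e$ to the subset $\S_f \subseteq \H$ gives
    \begin{align*}
        \e_f\left(\bigcup_{G \in \S} G\right) = \e\left(\bigcup_{H \in \S_f} H\right) = \inf_{H \in \S_f} \e(H).
    \end{align*}
    \item Finally, $\inf_{H \in \S_f} \e(H) = \inf_{G \in \S} \e(f^{-1}(G)) = \inf_{G \in \S} \e_f(G)$. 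This holds because the infimum over the image set equals the infimum over the index set $\S$; distinct $G$ may share a preimage, but that does not change the infimum.
\end{itemize}

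The only point needing care is the degenerate case $\S = \emptyset$. Then $\S_f = \emptyset$, the union on the left is $\emptyset$, and $f^{-1}(\emptyset) = \emptyset$. Both sides therefore equal $\e(\emptyset) = \infty = \inf \emptyset$, consistent with the paper's convention and with the preceding lemma that E-measures are E-functions. So the general argument covers this case without a separate treatment.

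I do not expect a genuine obstacle here. The step most deserving attention is ensuring that $f^{-1}(G) \in \H$, so that $\e$ may be evaluated there. This is exactly what the measurability assumption on $f$ provides.
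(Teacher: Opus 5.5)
Your proposal is correct and follows the paper's proof essentially step for step: you commute the preimage with the union and then apply the E-measure property of $\e$ to the family of preimages. Your additional remarks, that re-indexing the infimum is harmless and that the empty subfamily is covered, are sound details the paper leaves implicit.
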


            We can analogously define the pushforward of an E-kernel.
            Taking $\Y = \P$, we can pushforward a notion of validity for an E-kernel $\e$ on $(\P, \H)$ to $(\Z, \G)$.

            \begin{dfn}[Pushforward validity]
                Let $\e$ be an E-kernel on $(\P, \H)$.
                Then, we say that $\e_f$ is valid if
               \begin{align*}
                   \Ex^{P}\left[\e_f(G \mid X)\right] \leq 1, \quad \textnormal{for every } P \in \P \textnormal{ and } G \in \G \textnormal{ with } f(P) \in G.
               \end{align*}
            \end{dfn}
           
            \begin{lem}
                If $\e$ is valid on $(\P, \H)$ then its pushforward $\e_f$ is valid on $(\Z, \G)$.
            \end{lem}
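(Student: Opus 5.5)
The plan is to unfold the definition of pushforward validity and reduce it directly to hypothesis-wise validity of $\e$ on $(\P, \H)$. Fix $P \in \P$ and $G \in \G$ with $f(P) \in G$. By definition of the pushforward kernel, $\e_f(G \mid x) = \e(f^{-1}(G) \mid x)$ for every $x \in \X$. Measurability of $f$ guarantees $f^{-1}(G) \in \H$, so this is a well-defined $\Sigma$-measurable E-variable.

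The key observation is that $f(P) \in G$ is equivalent to $P \in f^{-1}(G)$. In particular, $f^{-1}(G)$ is a nonempty member of $\H$, that is, $f^{-1}(G) \in \H^\emptyset$. Hypothesis-wise validity of $\e$ then applies to the hypothesis $f^{-1}(G)$ at the point $P \in f^{-1}(G)$:
\begin{align*}
    \Ex^P\left[\e_f(G \mid X)\right] = \Ex^P\left[\e(f^{-1}(G) \mid X)\right] \leq 1.
\end{align*}
Since $P$ and $G$ were arbitrary subject to $f(P) \in G$, this is exactly pushforward validity. For completeness, Lemma \ref{lem:pushforward_E-measure}, applied pointwise in $x$, confirms that $\e_f$ is again an E-kernel.

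There is no substantive obstacle. The only point needing care is the nonemptiness of $f^{-1}(G)$: validity is only imposed on $\H^\emptyset$, and $f^{-1}(G)$ is nonempty precisely because it contains $P$. The case $G$ with no $P$ satisfying $f(P)\in G$ imposes no constraint at all.
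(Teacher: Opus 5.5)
Your proposal is correct and follows the paper's proof exactly: $f(P) \in G$ gives $P \in f^{-1}(G)$, so hypothesis-wise validity of $\e$ at the hypothesis $f^{-1}(G)$ yields $\Ex^P[\e_f(G \mid X)] \leq 1$. Your remark that $f^{-1}(G)$ is nonempty, and hence lies in the class where validity is imposed, makes explicit a point the paper leaves implicit.
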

            \begin{proof}
                If $f(P) \in G$, then $P \in f^{-1}(G)$.
                Hence, by the validity of $\e$, $\Ex^P[\e_f(G \mid X)]  = \Ex^P[\e(f^{-1}(G) \mid X)]  \leq 1$.
            \end{proof}
            
            \begin{exm}[Parameter space]
                Suppose that $\theta : \P \to \Theta$ is some map into a parameter space $\Theta$.
                We can then pushforward an E-kernel on $\P$ to the parameter space through $\e_\theta(A) := \e(\{P \in \P : \theta(P) \in A\})$, $A \subseteq \Theta$.
            \end{exm}

            \begin{exm}[Evidence against optimality]
                Following Section \ref{sec:optimal}, suppose we have a unique optimal decision $d_P^* := \argmin_{d \in \D} L_P(d)$ for every $P \in \P$.
                We can then define the map $f : \P \to \D$ as returning this optimal decision $f(P) = d_P^*$.
                We can subsequently pushforward an E-kernel on $(\P, \H)$ to $(\D, 2^\D)$ through
                \begin{align*}
                    \e_f(A \mid x)
                        := \e(\{P \in \P : d_P^* \in A\} \mid x), \quad A \subseteq \D.
                \end{align*}
                For every hypothesis $A \subseteq \D$, this expresses evidence against the claim that the true optimal decision is in $A$.
            \end{exm}

            \begin{exm}[Consequence function space]
                Recall the space $\L$ of consequence functions from Section \ref{sec:decisions}.
                We equip $\L$ with the hypothesis class $\H_\succsim$, generated from the uniform-dominance preorder:
                \begin{align*}
                    \ell \succsim \ell' \iff \ell(d) \succsim \ell'(d), \quad \textnormal{ for every } d \in \D.
                \end{align*}
                That is, from the upper set hypotheses on $\L$: $U_\ell := \{\ell' \in \L : \ell' \succsim \ell\}$.
                Consider the function $f(P) = L_P$.
                Then we get $f^{-1}(\H_\succsim) = \H_L$, recovering exactly the hypothesis class $\H_L$ that is central in Section \ref{sec:decisions}.
            \end{exm}

    \section{Acknowledgments}
        I thank Ruben van Beesten for a conversation about decision making under uncertainty, Junu Lee for several conversations about multiple testing with E-values, Sam van Meer for pointing out that the existence of a least hypothesis is equivalent to intersection-closure, and Stan Koobs for general comments.
        
    \bibliographystyle{abbrvnat}
    \bibliography{bibliography}

@article{grunwald2024beyond,
  title={Beyond Neyman--Pearson: E-values enable hypothesis testing with a data-driven alpha},
  author={Gr{\"u}nwald, Peter D},
  journal={Proceedings of the National Academy of Sciences},
  volume={121},
  number={39},
  pages={e2302098121},
  year={2024},
  publisher={National Academy of Sciences}
}

@article{chugg2026post,
  title={Post-Hoc Large-Sample Statistical Inference},
  author={Chugg, Ben and Gauthier, Etienne and Jordan, Michael I and Ramdas, Aaditya and Waudby-Smith, Ian},
  journal={arXiv preprint arXiv:2603.08002},
  year={2026}
}

@article{marcus1976closed,
    author = {Marcus, Ruth and Eric, Peritz and Gabriel, K. R.},
    title = {On closed testing procedures with special reference to ordered analysis of variance},
    journal = {Biometrika},
    volume = {63},
    number = {3},
    pages = {655-660},
    year = {1976},
    month = {12},
    issn = {0006-3444},
    doi = {10.1093/biomet/63.3.655},
}

@article{koning2024continuous,
  title={Continuous testing: Unifying tests and e-values},
  author={Koning, Nick W},
  journal={arXiv preprint arXiv:2409.05654},
  year={2024}
}

@article{ramdas2025hypothesis,
  title={Hypothesis testing with e-values},
  author={Ramdas, Aaditya and Wang, Ruodu},
  journal={Foundations and Trends{\textregistered} in Statistics},
  volume={1},
  number={1-2},
  pages={1--390},
  year={2025},
  publisher={Emerald Publishing Limited}
}

@article{shafer2021testing,
    author = {Shafer, Glenn},
    title = {Testing by Betting: A Strategy for Statistical and Scientific Communication},
    journal = {Journal of the Royal Statistical Society Series A: Statistics in Society},
    volume = {184},
    number = {2},
    pages = {407-431},
    year = {2021},
    month = {05},
    issn = {0964-1998},
    doi = {10.1111/rssa.12647}
}

@article{vovk2021evalues,
  title={E-values: Calibration, combination and applications},
  author={Vovk, Vladimir and Wang, Ruodu},
  journal={The Annals of Statistics},
  volume={49},
  number={3},
  pages={1736--1754},
  year={2021},
  publisher={Institute of Mathematical Statistics}
}

@article{waudbysmith2023estimating,
    author = {Waudby-Smith, Ian and Ramdas, Aaditya},
    title = {Estimating means of bounded random variables by betting},
    journal = {Journal of the Royal Statistical Society Series B: Statistical Methodology},
    volume = {86},
    number = {1},
    pages = {1-27},
    year = {2023},
    month = {02},
    issn = {1369-7412},
    doi = {10.1093/jrsssb/qkad009},
}

@article{howard2021time,
  title={Time-uniform, nonparametric, nonasymptotic confidence sequences},
  author={Howard, Steven R and Ramdas, Aaditya and McAuliffe, Jon and Sekhon, Jasjeet},
  journal={The Annals of Statistics},
  volume={49},
  number={2},
  pages={1055--1080},
  year={2021},
  publisher={JSTOR}
}

@article{koobs2026equivalence,
  title={Equivalence testing with data-dependent and post-hoc equivalence margins},
  author={Koobs, Stan and Koning, Nick W},
  journal={arXiv preprint arXiv:2603.16213},
  year={2026}
}

@inproceedings{kiyani2025decision,
  title={Decision Theoretic Foundations for Conformal Prediction: Optimal Uncertainty Quantification for Risk-Averse Agents},
  author={Kiyani, Shayan and Pappas, George J and Roth, Aaron and Hassani, Hamed},
  booktitle={International Conference on Machine Learning},
  pages={30943--30965},
  year={2025},
  organization={PMLR}
}

@article{andrews2025certified,
  title={Certified decisions},
  author={Andrews, Isaiah and Chen, Jiafeng},
  journal={arXiv preprint arXiv:2502.17830},
  year={2025}
}

@article{xu2025bringing,
  title={Bringing closure to false discovery rate control: A general principle for multiple testing},
  author={Xu, Ziyu and Solari, Aldo and Fischer, Lasse and de Heide, Rianne and Ramdas, Aaditya and Goeman, Jelle},
  journal={arXiv preprint arXiv:2509.02517},
  year={2025}
}

@article{wang2022false,
    author = {Wang, Ruodu and Ramdas, Aaditya},
    title = {False Discovery Rate Control with E-values},
    journal = {Journal of the Royal Statistical Society Series B: Statistical Methodology},
    volume = {84},
    number = {3},
    pages = {822-852},
    year = {2022},
    month = {01},
    doi = {10.1111/rssb.12489},
}

@article{koning2026anytime,
    author = {Koning, Nick W and van Meer, Sam},
    title = {Anytime validity is free: inducing sequential tests},
    journal = {Journal of the Royal Statistical Society Series B: Statistical Methodology},
    pages = {qkag050},
    year = {2026},
    month = {02},
    doi = {10.1093/jrsssb/qkag050},
}

@article{benjamini1995controlling,
  title={Controlling the false discovery rate: a practical and powerful approach to multiple testing},
  author={Benjamini, Yoav and Hochberg, Yosef},
  journal={Journal of the Royal statistical society: series B (Methodological)},
  volume={57},
  number={1},
  pages={289--300},
  year={1995},
  publisher={Wiley Online Library}
}

@article{ignatiadis2024asymptotic,
  title={Asymptotic and compound e-values: multiple testing and empirical Bayes},
  author={Ignatiadis, Nikolaos and Wang, Ruodu and Ramdas, Aaditya},
  journal={arXiv preprint arXiv:2409.19812},
  year={2024}
}

@article{hartog2025family,
  title={Family-wise error rate control with e-values},
  author={Hartog, Will and Lei, Lihua},
  journal={arXiv preprint arXiv:2501.09015},
  year={2025}
}

@article{ramdas2022admissible,
  title={Admissible anytime-valid sequential inference must rely on nonnegative martingales},
  author={Ramdas, Aaditya and Ruf, Johannes and Larsson, Martin and Koolen, Wouter},
  journal={arXiv preprint arXiv:2009.03167},
  year={2022}
}

@article{grunwald2024safe,
  title={Safe testing},
  author={Gr{\"u}nwald, Peter and de Heide, Rianne and Koolen, Wouter},
  journal={Journal of the Royal Statistical Society. Series B: Statistical Methodology},
  volume={86},
  number={5},
  pages={1136--1137},
  year={2024},
  publisher={Oxford University Press}
}

@article{koning2025fuzzy,
  title={Fuzzy prediction sets: Conformal prediction with e-values},
  author={Koning, Nick W and van Meer, Sam},
  journal={arXiv preprint arXiv:2509.13130},
  year={2025}
}

@article{koning2023post,
  title={Post-hoc alpha Hypothesis Testing and the Post-hoc p-value},
  author={Koning, Nick W},
  journal={arXiv preprint arXiv:2312.08040},
  year={2023}
}

@inproceedings{shilkret1971maxitive,
  title={Maxitive measure and integration},
  author={Shilkret, Niel},
  booktitle={Indagationes Mathematicae (Proceedings)},
  volume={74},
  pages={109--116},
  year={1971},
  organization={Elsevier}
}

@article{grunwald2023posterior,
  title={The e-posterior},
  author={Gr{\"u}nwald, Peter D},
  journal={Philosophical Transactions of the Royal Society A: Mathematical, Physical and Engineering Sciences},
  volume={381},
  number={2247},
  year={2023},
  publisher={The Royal Society}
}
    
    \appendix

    \section{E-integration}\label{sec:E-integration}
        To aggregate over an E-function, we consider a notion \emph{E-integration} with respect to an E-function $\e$.
        E-integrals share some properties with classical integrals: positive homogeneity, monotonicity (for E-capacities), their behavior on indicator functions (inverted), and point evaluation under Dirac E-measures.

        The key difference to classical integrals is that E-integrals allow the interchange of supremum and E-integration.
        This replaces the interchange of convex averages and integrals for classical measures.
        The E-integral corresponds to the Shilkret integral with respect to $p = 1/\e$ \citep{shilkret1971maxitive}.

        Recall order-measurability from Definition \ref{dfn:order-measurable}, where we consider the natural order $\geq$ on $[0, \infty]$.
        
        \begin{dfn}[$\H$-order-measurable]\label{dfn:measurable_function}
            A function $f : \mathcal{P} \to [0, \infty]$ is said to be $\mathcal{H}$-order-measurable if $\{f \geq c\} \in \mathcal{H}$ for all $c > 0$.
        \end{dfn}

        \begin{lem}[Order-measurable supremum]\label{lem:measurable_supremum}
            The supremum $\sup_{f \in \F} f$ of a class $\F$ of $\H$-order-measurable functions $f \in \F$ is $\H$-order-measurable.
        \end{lem}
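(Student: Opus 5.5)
The plan is to write the superlevel sets of the supremum as unions of superlevel sets of the individual functions, and then use that $\H$ is closed under arbitrary unions. Let $g := \sup_{f \in \F} f$ and fix $c > 0$. The one subtlety is that $\{g \geq c\}$ is not simply $\bigcup_{f \in \F}\{f \geq c\}$. A supremum can reach $c$ without any single $f$ reaching it, whereas the strict superlevel set does satisfy $\{g > c\} = \bigcup_{f \in \F} \{f > c\}$.

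First I will express strict superlevel sets through non-strict ones. For any $\H$-order-measurable $f$ and $c \geq 0$,
\begin{align*}
    \{f > c\} = \bigcup_{n \in \bN} \{f \geq c + 1/n\},
\end{align*}
and each set on the right lies in $\H$ because $c + 1/n > 0$. Union-closure then gives $\{f > c\} \in \H$. Applying this to every $f \in \F$ and using union-closure once more, $\{g > c\} = \bigcup_{f \in \F}\{f > c\} \in \H$.

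Next I recover the non-strict set from strict ones:
\begin{align*}
    \{g \geq c\} = \bigcap_{n \in \bN} \{g > c - 1/n\},
\end{align*}
restricting to $n$ with $c - 1/n \geq 0$. This step requires closure under countable intersections, which a general hypothesis class does not have. I therefore expect this to be the main obstacle.

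The first way around it is to avoid intersections altogether. Check whether $\{g \geq c\} = \bigcup_{f}\{f \geq c\}$ holds whenever the supremum is attained, and whether the paper's later use only needs the strict sets. If the lemma is meant in the intersection-closed setting of Proposition \ref{prp:d_loss_uniform}, the display above finishes the proof. Otherwise I would read Definition \ref{dfn:measurable_function} as effectively requiring strict superlevel sets, for which the union argument alone suffices. In the write-up I would state explicitly which of these two readings is used.
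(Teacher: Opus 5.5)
Your diagnosis is correct, and it goes further than the paper's own argument. The paper's one-line proof rests precisely on the identity $\{\sup_{f \in \mathcal{F}} f \geq c\} = \bigcup_{f \in \mathcal{F}} \{f \geq c\}$ plus union-closure, and only the inclusion $\supseteq$ holds in general. However, the obstacle you flag is not merely a gap in your route. For a hypothesis class that is only closed under unions, the statement itself is false, so no choice of ``reading'' rescues it as stated.

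Here is a counterexample. Take a countable $\mathcal{P} = \{P_0, P_1, P_2, \dots\}$, set $A_n := \{P_0\} \cup \{P_k : k \geq n\}$ for $n \geq 1$, and let $\mathcal{H} := \{\emptyset\} \cup \{A_n : n \geq 1\}$. This is closed under arbitrary unions because the $A_n$ are nested: a nonempty union $\bigcup_{n \in N} A_n$ equals $A_{\min N}$. Each $f_n := (1 - 1/n) 1_{A_n}$ is $\mathcal{H}$-order-measurable, since every $\{f_n \geq c\}$ with $c > 0$ is either $A_n$ or $\emptyset$. Yet $g := \sup_n f_n$ has $g(P_0) = 1$ and $g(P_k) = 1 - 1/k < 1$ for $k \geq 1$. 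Hence $\{g \geq 1\} = \{P_0\} = \bigcap_n A_n \notin \mathcal{H}$. This is exactly your intersection $\bigcap_n \{g > 1 - 1/n\} = \bigcap_n A_{n+1}$ leaving $\mathcal{H}$.

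So the write-up should commit to a corrected statement rather than to a reading. Your two displays prove the lemma whenever $\mathcal{H}$ is closed under countable intersections. That covers the intersection-closed setting where the paper actually uses the lemma (Proposition \ref{prp:least_true_integration}). In that setting you can also argue directly: order-measurability is equivalent to $f(P') \geq f(P)$ whenever $P' \in H_P$. This property is trivially preserved by pointwise suprema, and then $\{g \geq c\} = \bigcup_{P : g(P) \geq c} H_P \in \mathcal{H}$.

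Your other fallbacks are weaker.
\begin{itemize}
    \item Pointwise attainment of the supremum (e.g.\ finite $\mathcal{F}$) does give the union identity, but it does not cover an arbitrary class.
    \item Switching Definition \ref{dfn:measurable_function} to strict superlevel sets makes the union argument work. But then $\e(\{f \geq c\})$ in the E-integral need no longer be defined, so this amends the paper rather than proving the stated lemma.
\end{itemize}

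The same false identity also appears in the paper's proof of Lemma \ref{lem:sup_interchange}. There, once $g$ is known to be order-measurable, the E-measure equality can be recovered from $\{g \geq c\} \subseteq \bigcup_{f \in \mathcal{F}} \{f \geq c'\}$ for $c' < c$ by letting $c' \uparrow c$.
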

        
        % UNUSED
        % \begin{lem}
        %     If $f$ is measurable then $\{f > c\}$ is measurable for all $c \geq 0$.
        % \end{lem}
        % \begin{proof}
        %     This follows from $\{f > c\} = \bigcup_{b > c} \{f \geq b\}$ and union-closure of $\H$.
        % \end{proof}
        
        \begin{dfn}[E-integral]
            Let $f : \P \to [0, \infty]$ be an $\mathcal{H}$-order-measurable function.
            The E-integral of $f$ with respect to an E-function $\e$ is defined as
            \begin{align*}                
                \int^E f d\e = \sup_{c > 0} \frac{c}{\e(\{f \geq c\})}.
            \end{align*}
        \end{dfn}
        
        \begin{lem}[Positive homogeneity]\label{lem:pos_hom}
            For every $a > 0$, $\int^E af d\e = a\int^E f d\e$.
        \end{lem}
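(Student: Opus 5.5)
The plan is to show that the superlevel sets of $af$ are superlevel sets of $f$ at rescaled thresholds, and then to reparametrize the supremum in the definition of the E-integral. First I check that the left-hand side is well defined. For $a>0$ and $c>0$ we have $\{af \geq c\} = \{f \geq c/a\}$, which holds pointwise on $\P$ with the conventions $a\cdot\infty=\infty$. Since $c/a > 0$, this set lies in $\H$ by $\H$-order-measurability of $f$. Hence $af$ is again $\H$-order-measurable, and $\int^E af\, d\e$ makes sense.

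Next I substitute $b = c/a$. The map $c \mapsto c/a$ is a bijection of $(0,\infty)$ onto itself, so
\begin{align*}
    \int^E af\, d\e
        = \sup_{c>0} \frac{c}{\e(\{f \geq c/a\})}
        = \sup_{b>0} \frac{ab}{\e(\{f\geq b\})}.
\end{align*}
It remains to pull the factor $a$ out of each quotient and then out of the supremum. That is, I need $\frac{ab}{\e(A)} = a\cdot\frac{b}{\e(A)}$ for every $A\in\H$ and $b>0$, and $\sup_b a g(b) = a \sup_b g(b)$ for $[0,\infty]$-valued $g$.

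The only step needing care, and the only potential obstacle, is checking these identities under the paper's infinity conventions. I treat the cases of $\e(A)$ separately.
\begin{itemize}
    \item If $\e(A)\in(0,\infty)$, the identity is ordinary arithmetic.
    \item If $\e(A)=0$, both sides equal $\infty$, because $ab>0$ and $b>0$ and $c/0=\infty$ for $c>0$.
    \item If $\e(A)=\infty$, both sides equal $0$, because $c/\infty = 0$ for $c>0$.
\end{itemize}
Since $b>0$, the case $0/0$ never arises. For the supremum, multiplication by a fixed $a\in(0,\infty)$ is an order isomorphism of $[0,\infty]$ with $a\cdot\infty=\infty$, so it commutes with suprema, including the case where the supremum is $\infty$. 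This yields $\int^E af\,d\e = a\int^E f\,d\e$.
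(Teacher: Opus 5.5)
Your proof is correct and matches the paper's argument: rewrite $\{af \geq c\}$ as $\{f \geq c/a\}$, reparametrize with $b = c/a$, and pull the factor $a$ out of each quotient and then out of the supremum. Your extra checks, that $af$ is $\H$-order-measurable and that the identities survive the infinity conventions, are sound and are steps the paper leaves implicit.
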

        
        \begin{lem}[Indicator]\label{lem:indicator}
            We have $\int^E 1_H d\e = 1/\e(H)$, for every $H \in \H$.
        \end{lem}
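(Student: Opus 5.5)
The claim is that $\int^E 1_H \, d\e = 1/\e(H)$ for every $H \in \H$. I will compute directly from the definition of the E-integral, $\int^E 1_H\, d\e = \sup_{c > 0} c / \e(\{1_H \geq c\})$. First I check order-measurability of $1_H$, since the integral is only defined for such functions. The superlevel sets are $\{1_H \geq c\} = H$ for $0 < c \leq 1$ and $\{1_H \geq c\} = \emptyset$ for $c > 1$. Both lie in $\H$: $H$ by assumption, and $\emptyset$ because a hypothesis class is union-closed and so contains the empty union.

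Next I split the supremum into these two ranges of $c$.

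For $c > 1$ the term is $c/\e(\emptyset) = c/\infty = 0$. Here I use the defining property $\e(\emptyset) = \infty$ of an E-function (the Impossibility axiom) together with the convention $c/\infty = 0$. So these $c$ contribute nothing.

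For $0 < c \leq 1$ the term is $c/\e(H)$, which is nondecreasing in $c$. Its supremum over $(0,1]$ is therefore attained at $c = 1$ and equals $1/\e(H)$.

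The overall supremum is then $\max\{0, 1/\e(H)\} = 1/\e(H)$, as claimed.

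The only delicate step is checking the infinity conventions of the paper in the boundary cases.

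If $\e(H) = 0$, every term with $c \leq 1$ is $c/0 = \infty$ (using $c/0 = \infty$ for $c > 0$), so the supremum is $\infty = 1/0$. This is consistent with the claim.

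If $\e(H) = \infty$ (including $H = \emptyset$), all terms are $0$, and $1/\infty = 0$. This is also consistent.

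Since $c > 0$ throughout, the convention $0/0$ never arises. I expect no further obstacle.
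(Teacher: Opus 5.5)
Your proposal is correct and follows the paper's proof essentially verbatim: split the supremum at $c = 1$ using $\{1_H \geq c\} \in \{H, \emptyset\}$ and $\e(\emptyset) = \infty$. Your extra checks are details the paper leaves implicit, namely order-measurability of $1_H$ and the boundary cases $\e(H) \in \{0, \infty\}$ under the paper's infinity conventions, and they go through as you state.
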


        \begin{lem}[Monotonicity]\label{lem:monotonicity}
            If $\e$ is an E-capacity and $f \geq g$ then $\int^E f d\e \geq \int^E g d\e$.
        \end{lem}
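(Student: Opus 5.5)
The plan is to reduce the claim to a comparison of upper level sets. Fix $c > 0$. Since $f \geq g$ pointwise, any $P$ with $g(P) \geq c$ also has $f(P) \geq c$, so
\begin{align*}
    \{g \geq c\} \subseteq \{f \geq c\}.
\end{align*}
Both sets lie in $\H$ by the $\H$-order-measurability of $f$ and $g$ (Definition \ref{dfn:measurable_function}). This is the only place where measurability is used: it guarantees that $\e$ can be evaluated on both sets.

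Next, use that $\e$ is an E-capacity, so it is antitone under set inclusion. This gives
\begin{align*}
    \e(\{f \geq c\}) \leq \e(\{g \geq c\}).
\end{align*}
The map $t \mapsto c/t$ is non-increasing on $[0, \infty]$ under the paper's infinity conventions. Hence
\begin{align*}
    \frac{c}{\e(\{f \geq c\})} \geq \frac{c}{\e(\{g \geq c\})}.
\end{align*}
Taking the supremum over $c > 0$ on both sides yields $\int^E f \, d\e \geq \int^E g \, d\e$.

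The only step needing care is the boundary cases in the reciprocal. Because $c > 0$, the convention $0/0 = 0$ never applies. The conventions $c/0 = \infty$ and $c/\infty = 0$ make $t \mapsto c/t$ non-increasing on all of $[0, \infty]$:
\begin{itemize}
    \item If $\e(\{g \geq c\}) = \infty$, for instance when the level set is $\emptyset$, the right-hand side is $0$ and the inequality holds trivially.
    \item If $\e(\{f \geq c\}) = 0$, the left-hand side is $\infty$ and the inequality again holds.
\end{itemize}
I expect no real obstacle beyond checking these conventions.
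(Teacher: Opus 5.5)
Your proposal is correct and follows the paper's proof step for step: the inclusion $\{g \geq c\} \subseteq \{f \geq c\}$, antitonicity of the E-capacity, reversing the inequality under $t \mapsto c/t$, and taking the supremum over $c > 0$. Your explicit check that the infinity conventions keep $t \mapsto c/t$ non-increasing on $[0,\infty]$ for $c > 0$ fills in a detail the paper leaves implicit.
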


        \begin{lem}[Interchanging E-integral and supremum]\label{lem:sup_interchange}
            Let $\F$ be a collection of $\H$-order-measurable functions.
            If $\e$ is an E-capacity, $\int^E \sup_{f \in \F} f d\e
                    \geq \sup_{f \in \F} \int^E f d\e$.
            If $\e$ is an E-measure, $\int^E \sup_{f \in \F} f d\e
                    = \sup_{f \in \F} \int^E f d\e$.
        \end{lem}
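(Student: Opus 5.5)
The statement to prove is Lemma \ref{lem:sup_interchange}. I will write $g := \sup_{f \in \F} f$ throughout. By Lemma \ref{lem:measurable_supremum}, $g$ is $\H$-order-measurable, so its E-integral is well defined. The whole argument rests on one set identity for upper level sets, which I establish first: for every $c > 0$,
\begin{align*}
    \{g \geq c\} \supseteq \bigcup_{f \in \F} \{f \geq c\}, \qquad \{g > c\} \subseteq \bigcup_{f \in \F} \{f > c\} \subseteq \bigcup_{f \in \F} \{f \geq c\}.
\end{align*}
The second chain holds because $g(P) > c$ forces some $f(P) > c$ by the definition of the supremum.

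\textbf{E-capacity inequality.} For this part I only need monotonicity. Fix $f \in \F$. Since $f \leq g$, Lemma \ref{lem:monotonicity} gives $\int^E f\, d\e \leq \int^E g\, d\e$. Taking the supremum over $f$ yields the claimed inequality. Directly, this is just $\{f \geq c\} \subseteq \{g \geq c\}$ combined with antitonicity, which gives $c/\e(\{f \geq c\}) \leq c/\e(\{g \geq c\})$.

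\textbf{E-measure equality, reverse direction.} Now suppose $\e$ is an E-measure; I must show $\int^E g\, d\e \leq \sup_f \int^E f\, d\e$. The obstacle is that $\{g \geq c\}$ need not equal $\bigcup_f \{f \geq c\}$ when the supremum is not attained. I handle this with a small slack. Fix $c > 0$ and $0 < c' < c$. Then
\[
    \{g \geq c\} \subseteq \{g > c'\} \subseteq \bigcup_{f} \{f \geq c'\}.
\]
Antitonicity and the closure axiom then give
\[
    \e(\{g \geq c\}) \geq \e\Big(\bigcup_f \{f \geq c'\}\Big) = \inf_f \e(\{f \geq c'\}).
\]
It follows that
\[
    \frac{c'}{\e(\{g\geq c\})} \leq \frac{c'}{\inf_f \e(\{f\geq c'\})} = \sup_f \frac{c'}{\e(\{f \geq c'\})} \leq \sup_f \int^E f\, d\e.
\]
The middle equality needs care with the conventions $c/0 = \infty$ and $c/\infty = 0$. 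It holds because $t \mapsto c'/t$ is a decreasing bijection of $[0,\infty]$ that maps infima to suprema, and the empty case $\inf\emptyset = \infty$ gives $0 = \sup\emptyset$.

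Finally I let $c' \uparrow c$. The left side tends to $c/\e(\{g\geq c\})$, including the value $\infty$ when $\e(\{g \geq c\}) = 0$. Taking the supremum over $c > 0$ then gives $\int^E g\, d\e \leq \sup_f \int^E f\, d\e$. Combined with the E-capacity inequality, which applies because an E-measure is an E-capacity, this proves the equality.

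The main point of care is the non-attainment issue in the reverse direction, which the slack $c' < c$ resolves. The remaining bookkeeping consists of checking the $[0,\infty]$ conventions in the limit and in the inf/sup inversion.
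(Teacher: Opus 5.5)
Your proof is correct, and its skeleton matches the paper's: monotonicity gives the E-capacity inequality, and for the E-measure equality you use antitonicity, the closure axiom, the inversion $c/\inf_f \e(\{f\ge c\})=\sup_f c/\e(\{f\ge c\})$, and a swap of suprema. The real difference is at the step you singled out, and there your version is the sound one. The paper's proof writes $\int^E g\,d\e=\sup_{c>0}c/\e(\bigcup_{f}\{f\ge c\})$, relying on the identity $\{\sup_{f} f\ge c\}=\bigcup_{f}\{f\ge c\}$. That identity fails when suprema are not attained: if $\mathcal{P}\in\mathcal{H}$ and $f_n\equiv 1-1/n$, then $\{g\ge 1\}=\mathcal{P}$ while $\bigcup_n\{f_n\ge 1\}=\emptyset$. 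Only the inclusion $\supseteq$ holds, and it points the wrong way for the reverse inequality. Your sandwich $\bigcup_f\{f\ge c\}\subseteq\{g\ge c\}\subseteq\bigcup_f\{f\ge c'\}$ for $0<c'<c$ shows that the paper's first equality is still true after taking the supremum over $c$, though not termwise. That is exactly what the argument needs.

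One caveat, which you inherit rather than introduce. You take well-definedness of $\int^E g\,d\e$ from Lemma~\ref{lem:measurable_supremum}, but the paper proves that lemma with the same false identity, and the lemma fails when $\mathcal{H}$ is merely union-closed. For a counterexample, let $\mathcal{P}=\{P_0,P_1,P_2,\dots\}$, $A_n=\{P_0\}\cup\{P_k:k\ge n\}$, $\mathcal{H}=\{\emptyset\}\cup\{A_n:n\ge 1\}$ (a nested class), and $f_n=(1-1/n)\mathbf{1}_{A_n}$. Each $f_n$ is $\mathcal{H}$-order-measurable, but $g(P_0)=1$ and $g(P_k)=1-1/k$ for $k\ge 1$, so $\{g\ge 1\}=\{P_0\}\notin\mathcal{H}$. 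The correct representation is $\{g\ge c\}=\bigcap_{0<c'<c}\bigcup_{f}\{f>c'\}$, where $\{f>c'\}=\bigcup_{b>c'}\{f\ge b\}\in\mathcal{H}$. Hence $g$ is order-measurable when $(\mathcal{P},\mathcal{H})$ is intersection-closed. In general, order-measurability of $g$ has to be read as an implicit hypothesis of the lemma, since without it the left-hand side is undefined. Under that proviso your argument is complete.
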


        \begin{dfn}[Dirac E-measure]
            The (inverse of) a Dirac measure is an E-measure: $1 / \delta_P(H) := 1 / \ind{P \in H}$.
        \end{dfn}
        
        \begin{lem}[Dirac integral]\label{lem:dirac}
            We have $\int^E f d(1/\delta_P) = \sup_{c > 0} c \ind{f \geq c} = f(P)$.
        \end{lem}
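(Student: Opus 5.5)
The statement to prove is Lemma \ref{lem:dirac}: $\int^E f \, d(1/\delta_P) = \sup_{c>0} c\,\ind{f(P) \geq c} = f(P)$. I will unfold the definition of the E-integral with $\e = 1/\delta_P$ and then evaluate a one-dimensional supremum.

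First, fix an $\H$-order-measurable $f$ and $c>0$. Then $\{f\geq c\}\in\H$, so the Dirac E-measure is defined there and
\[
\e(\{f\geq c\}) = 1/\ind{P\in\{f\geq c\}} = 1/\ind{f(P)\geq c}.
\]
This equals $1$ if $f(P)\geq c$ and $1/0=\infty$ otherwise. Hence, by the infinity conventions ($c/1=c$, $c/\infty=0$ for $c>0$),
\[
\frac{c}{\e(\{f\geq c\})} = c\,\ind{f(P)\geq c}.
\]
Taking $\sup_{c>0}$ gives the first equality.

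Second, I evaluate $\sup_{c>0} c\,\ind{f(P)\geq c}$ by cases.
\begin{itemize}
\item If $f(P)=0$, every term vanishes, so the supremum is $0=f(P)$.
\item If $0<f(P)<\infty$, the terms equal $c$ for $c\le f(P)$ and $0$ otherwise, so the supremum is attained at $c=f(P)$ and equals $f(P)$.
\item If $f(P)=\infty$, every $c>0$ contributes $c$, so the supremum is $\infty=f(P)$.
\end{itemize}

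The only delicate point is the bookkeeping with the conventions $1/0=\infty$ and $c/\infty=0$, together with the edge cases $f(P)\in\{0,\infty\}$. I do not expect any genuine obstacle.
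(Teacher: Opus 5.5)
Your proof is correct and is essentially the paper's argument: with $\e = 1/\delta_P$, you unfold the definition of the E-integral and use the conventions $1/0=\infty$ and $c/\infty=0$ to get $c/\e(\{f\geq c\}) = c\,\ind{f(P)\geq c}$, then evaluate the supremum. Your case split on $f(P)=0$, $0<f(P)<\infty$ and $f(P)=\infty$ just spells out the final equality, which the paper states without detail.
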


        \begin{dfn}[1-E-measure]\label{dfn:1-E-measure}
            We define the constant `1-E-measure' $\mathbf{1} : \H \to [0, \infty]$ as $\mathbf{1}(H) = 1$ for $H \in \H^\emptyset$ and $\mathbf{1}(\emptyset) = \infty$.
        \end{dfn}

        \begin{lem}\label{lem:1-E-measure-integral}
            We have $\int^E f d\mathbf{1}= \sup_{P \in \P} f(P)$.
        \end{lem}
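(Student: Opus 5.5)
We need to prove Lemma \ref{lem:1-E-measure-integral}: $\int^E f\, d\mathbf{1} = \sup_{P \in \P} f(P)$ for an $\H$-order-measurable $f : \P \to [0,\infty]$. The plan is to unfold the definition of the E-integral and split the supremum over $c > 0$ according to whether the superlevel set $\{f \geq c\}$ is empty.

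Fix $c > 0$. By order-measurability $\{f \geq c\} \in \H$, so $\mathbf{1}(\{f \geq c\})$ is defined. There are two cases.
\begin{itemize}
    \item If $\{f \geq c\} \neq \emptyset$, then $\mathbf{1}(\{f\geq c\}) = 1$, so the term $c/\mathbf{1}(\{f \geq c\})$ equals $c$.
    \item If $\{f \geq c\} = \emptyset$, then $\mathbf{1}(\emptyset) = \infty$, and the convention $c/\infty = 0$ makes the term $0$.
\end{itemize}
Hence
\begin{align*}
    \int^E f\, d\mathbf{1} = \sup_{c > 0} c\,\ind{\{f \geq c\} \neq \emptyset},
\end{align*}
with the convention $\sup \emptyset = 0$ handling the case where every superlevel set is empty.

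It remains to show that $\sup\{c > 0 : \exists P,\ f(P) \geq c\} = \sup_P f(P)$, with both sides read as $0$ when the left-hand set is empty.
\begin{itemize}
    \item \emph{($\leq$)} If $f(P) \geq c$ for some $P$, then $c \leq \sup_P f(P)$.
    \item \emph{($\geq$)} For any $P$ with $f(P) > 0$, take $c = f(P)$ when $f(P) < \infty$. When $f(P) = \infty$, every $c > 0$ is admissible, so the supremum is $\infty$.
    \item \emph{Degenerate case.} If $f \equiv 0$ or $\P = \emptyset$, then both sides are $0$, since $\{f \geq c\} = \emptyset$ for all $c > 0$.
\end{itemize}

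The main obstacle is only bookkeeping the conventions: infinite values of $f$, the empty set, and $c/\infty = 0$. An alternative route would use Lemma \ref{lem:sup_interchange} to write $f = \sup_P f(P) 1_{\{P\}}$ together with Lemma \ref{lem:indicator}. That route fails, however, because $1_{\{P\}}$ need not be order-measurable, so we use the direct computation above.
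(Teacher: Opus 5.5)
Your proof is correct and follows the paper's own argument: the same case split on whether $\{f \geq c\}$ is empty (giving $c$ or $0$ via $c/\infty = 0$), which reduces the claim to $\sup\{c > 0 : \{f \geq c\} \neq \emptyset\} = \sup_{P \in \P} f(P)$. Your direct two-sided comparison of the suprema is slightly more careful than the paper's intermediate step. That step identifies the set $\{c > 0 : \{f \geq c\} \neq \emptyset\}$ with $\{c > 0 : \sup_{P} f(P) \geq c\}$, and these sets can differ when the supremum of $f$ is not attained, even though their suprema coincide.
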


        \subsection{E-Integration under intersection-closure}
            If $(\P, \H)$ is intersection-closed, E-integration with respect to an E-measure can be expressed as a supremum over $P \in \P$ instead of $c > 0$.
            Here, we use the convention $0 / 0 = 0$.
            
            \begin{prp}\label{prp:least_true_integration}
                Suppose $(\P, \H)$ is intersection-closed with least hypotheses $H_P$, $P \in \P$.
                Let $\e$ be an E-measure for $\H$.
                Let $\F$ be a collection of $\H$-order-measurable functions $f : \P \to [0,\infty]$.
                Then,
                \begin{align*}
                    \int^E \sup_{f \in \F} f \, d\e
                        = \sup_{f \in \F} \sup_{P \in \P} \frac{f(P)}{\e(H_P)}
                \end{align*}
            \end{prp}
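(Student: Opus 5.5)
By Lemma \ref{lem:sup_interchange}, since $\e$ is an E-measure, $\int^E \sup_{f \in \F} f \, d\e = \sup_{f \in \F} \int^E f \, d\e$. Lemma \ref{lem:measurable_supremum} guarantees that $\sup_{f\in\F} f$ is $\H$-order-measurable, so the left-hand side is well defined. It therefore suffices to prove the single-function identity
\begin{align*}
    \int^E f \, d\e = \sup_{P \in \P} \frac{f(P)}{\e(H_P)}
\end{align*}
for each $\H$-order-measurable $f$. I will show both inequalities.

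For ``$\leq$'', fix $c > 0$ and set $H = \{f \geq c\} \in \H$. By the Closure axiom and the canonical cover of Lemma \ref{lem:canonical_cover}, $\e(H) = \inf_{P \in H} \e(H_P)$; this is also Theorem \ref{thm:closure_least_true_hypothesis} combined with $\e = \overline{\e}$ from Theorem \ref{thm:smallest_dominating_E-measure}. Hence
\begin{align*}
    \frac{c}{\e(\{f \geq c\})} = \sup_{P : f(P) \geq c} \frac{c}{\e(H_P)} \leq \sup_{P : f(P)\geq c} \frac{f(P)}{\e(H_P)}.
\end{align*}
Taking the supremum over $c$ gives the bound. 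If $H = \emptyset$, the left side is $c/\infty = 0$ and the bound is trivial.

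For ``$\geq$'', fix $P$ with $f(P) > 0$; if $f(P) = 0$ the term is $0$ under the convention $0/0 = 0$. First take $f(P) < \infty$ and set $c = f(P)$. Then $P \in \{f \geq c\} \in \H$, so $H_P \subseteq \{f \geq c\}$ by the defining property of least hypotheses. Antitonicity of the E-measure gives $\e(\{f\geq c\}) \leq \e(H_P)$, so
\begin{align*}
    \frac{c}{\e(\{f\geq c\})} \geq \frac{f(P)}{\e(H_P)}.
\end{align*}
If $f(P) = \infty$, apply the same argument with every finite $c$ and let $c \to \infty$. This yields $\infty$ unless $\e(H_P) = \infty$, in which case the term is $\infty/\infty$ and must be interpreted by the same convention.

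The main obstacle I expect is the bookkeeping of the infinity conventions. The problem cases are $\e(H_P) \in \{0, \infty\}$ with $f(P) \in \{0, \infty\}$, since the sup/inf interchange $c/\inf = \sup c/\cdot$ must be checked to hold under $c/0 = \infty$ and $c/\infty = 0$. In particular $\sup_P c/\e(H_P) = c/\inf_P \e(H_P)$ holds because $x \mapsto c/x$ is decreasing and continuous on $[0,\infty]$ under these conventions. I would handle these cases in a short separate remark rather than in the main chain of inequalities.
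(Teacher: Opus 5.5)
Your proof is correct, but it reaches the single-function identity by a different route from the paper's. The paper first proves the decomposition $f = \sup_{P \in \P} f(P)\, 1_{H_P}$, using $H_P \subseteq \{f \geq f(P)\}$. It then evaluates $\int^E f\,d\e$ entirely with the E-integral calculus: Lemma \ref{lem:sup_interchange} pulls the supremum out, and Lemmas \ref{lem:pos_hom} and \ref{lem:indicator} turn each term into $f(P)/\e(H_P)$. It handles general $\F$ by applying this identity to $g = \sup_{f \in \F} f$, whereas you interchange first and apply the identity to each $f$. You instead work directly with $\sup_{c>0} c/\e(\{f \geq c\})$ and sandwich it. Your upper bound uses the closure axiom through the canonical cover, $\e(\{f \geq c\}) = \inf_{P : f(P) \geq c} \e(H_P)$. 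Your lower bound uses only $H_P \subseteq \{f \geq c\}$ and antitonicity.

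Your route buys two things. First, it shows where each hypothesis is used: $\int^E f\,d\e \geq \sup_P f(P)/\e(H_P)$ already holds for E-capacities, and the E-measure property is needed only for the reverse inequality. Second, it treats the extreme values explicitly. The paper applies positive homogeneity with $a = f(P)$ possibly in $\{0,\infty\}$, which lies outside Lemma \ref{lem:pos_hom} (stated for $a>0$, proved only for finite $a$). It also needs $\{f \geq f(P)\} \in \H$ when $f(P) = \infty$, which holds but only via intersection-closure. You only ever use $\{f \geq c\}$ for finite $c > 0$, which is exactly order-measurability. The paper's route buys structure instead: writing $f$ as a supremum of scaled indicators of least hypotheses is the functional counterpart of the canonical cover, and it makes the result a direct corollary of the integral lemmas.

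The loose end you flagged resolves as it must. The case $f(P) = \e(H_P) = \infty$ has to be read as $\infty/\infty = 0$, which is the paper's convention $c/\infty = 0$ with $c = \infty$. This reading is forced: take $\P = \{P\}$, $\e(\{P\}) = \infty$ and $f \equiv \infty$; then the left-hand side equals $\sup_{c > 0} c/\infty = 0$.
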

            
        \subsection{E-Markov and Post-hoc E-Markov}
            There exists an analogue to Markov's inequality for E-integrals.
            The post-hoc Markov's equality \citep{koning2023post} for E-integrals simply recovers E-integration.
            \begin{prp}[E-Markov]\label{prp:markov}
                For every $c > 0$ and $\H$-order-measurable $f : \P \to [0, \infty]$, we have
                \begin{align*}
                    \int^E f d\e \geq c / \e(\{f \geq c\}).
                \end{align*}
            \end{prp}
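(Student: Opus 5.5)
The E-Markov inequality follows directly from the definition of the E-integral as a supremum over thresholds. Fix $c > 0$ and an $\H$-order-measurable $f$. By definition,
\begin{align*}
    \int^E f \, d\e = \sup_{c' > 0} \frac{c'}{\e(\{f \geq c'\})},
\end{align*}
where each set $\{f \geq c'\}$ lies in $\H$ by order-measurability, so every term is well defined. Keeping only the term $c' = c$ then gives $\int^E f \, d\e \geq c / \e(\{f \geq c\})$.

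The only point needing care is the arithmetic in $[0, \infty]$ under the conventions of the paper. Since $c > 0$, we have $c/0 = \infty$ and $c/\infty = 0$, so the term $c/\e(\{f \geq c\})$ is a well-defined element of $[0,\infty]$. In particular, if $\{f \geq c\} = \emptyset$, then $\e(\emptyset) = \infty$ because $\e$ is an E-function, and the bound reads $\int^E f\, d\e \geq 0$, which holds trivially. If $\e(\{f \geq c\}) = 0$, the right-hand side is $\infty$, and so is the supremum, so the inequality still holds. No monotonicity of $\e$ is used, so the statement holds for arbitrary E-functions and not only for E-capacities or E-measures.

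There is no real obstacle. The main thing to check is that the statement is for a fixed $c$, not uniform in data. For the later use in Theorem \ref{thm:grunwald_bound}, one applies this pointwise in $x$ with the choice $c = L_P(d)$, and rearranges to $c/\int^E f \, d\e \leq \e(\{f \geq c\})$. That rearrangement needs the conventions $0/0 = 0$ and $c/\infty = 0$ to be handled case by case: when $c = 0$ the claim is vacuous, since the left-hand side is $0$, and when the integral is $\infty$ the left-hand side is $0$.
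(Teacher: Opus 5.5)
Your proof is correct and matches the paper's: both note that $c/\e(\{f \geq c\})$ is one of the terms in the supremum that defines $\int^E f \, d\e$. Your extra checks of the extended-arithmetic edge cases, and your observation that no monotonicity of $\e$ is used, are accurate but not needed for the statement.
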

            \begin{proof}
                This follows directly from the definition of E-integration: $\int^E f d\e = \sup_{b > 0} b / \e(\{f \geq b\}) \geq c / \e(\{f \geq c\})$, for every $c > 0$.
            \end{proof}
    
            \begin{prp}[Post-hoc E-Markov]\label{prp:post-hoc_markov}
                For every $\H$-order-measurable $f : \P \to [0, \infty]$, we have
                \begin{align*}
                    \int^E f d\e
                        = \int^E \sup_{c > 0} c\ind{f \geq c} d\e
                        = \sup_{c > 0} \int^E c\ind{f \geq c} d\e
                        = \sup_{c > 0} \frac{c}{\e(\{f \geq c\})}.
                \end{align*}
            \end{prp}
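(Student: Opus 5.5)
The statement is a chain of three equalities. I would establish the outer identity $\int^E f\,d\e = \sup_{c>0} c/\e(\{f\geq c\})$ first and then obtain the middle terms from results already available.

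\emph{First equality.} For every $P$ I will show the pointwise identity $f(P) = \sup_{c>0} c\,\ind{f(P)\geq c}$.
\begin{itemize}
\item If $f(P)=0$, every indicator vanishes, so the supremum is $0$ under the convention $\sup\emptyset = 0$.
\item If $0<f(P)<\infty$, the supremum is attained at $c=f(P)$.
\item If $f(P)=\infty$, the indicator equals $1$ for every $c>0$, so the supremum is $\infty$.
\end{itemize}
This is precisely Lemma \ref{lem:dirac} evaluated pointwise. Hence the two integrands coincide as functions on $\P$, and their E-integrals coincide trivially.

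\emph{Second equality.} This interchanges the E-integral with the supremum over the family $\F = \{c\,\ind{f\geq c} : c>0\}$. I plan to invoke Lemma \ref{lem:sup_interchange}, which first requires checking that each member of $\F$ is $\H$-order-measurable. Fix $c>0$ and $b>0$. The level set $\{c\,\ind{f\geq c}\geq b\}$ equals $\{f\geq c\}$ if $b\leq c$, and equals $\emptyset$ if $b>c$. Both sets lie in $\H$: the first by order-measurability of $f$, the second because $\emptyset\in\H$.

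The main obstacle is that Lemma \ref{lem:sup_interchange} gives equality only when $\e$ is an E-measure. For a mere E-function it gives nothing, and for an E-capacity it gives only an inequality. So I would either state that the proposition is read with $\e$ an E-measure, or bypass the lemma entirely via the third step below. In the second route, the first and third expressions are shown equal directly. The middle equality then holds whenever the sum of the per-$c$ integrals matches, which I verify next.

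\emph{Third equality.} Using Lemma \ref{lem:pos_hom} and Lemma \ref{lem:indicator} with $H=\{f\geq c\}$, I get
\[
\int^E c\,\ind{f\geq c}\,d\e = c\int^E 1_{\{f\geq c\}}\,d\e = \frac{c}{\e(\{f\geq c\})}.
\]
Taking $\sup_{c>0}$ gives the right-hand side.

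The right-hand side is literally the definition of $\int^E f\,d\e$. Combined with the first equality, this shows the first and last expressions coincide for any E-function. Thus the only genuine content requiring a hypothesis on $\e$ is the sup-interchange in the middle step, and there I rely on the E-measure case of Lemma \ref{lem:sup_interchange}.
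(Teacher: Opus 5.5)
Your proposal is correct and follows the paper's proof step for step. Like the paper, you get the first equality from the pointwise identity of Lemma \ref{lem:dirac}, the second from Lemma \ref{lem:sup_interchange}, and the third from Lemmas \ref{lem:pos_hom} and \ref{lem:indicator}.

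Your closing hedge is unnecessary, though. The first equality, the third equality and the definition of $\int^E f\,d\e$ all hold for an arbitrary E-function, so the middle equality follows by transitivity without the E-measure case of Lemma \ref{lem:sup_interchange}. This makes your bypass slightly more general than the paper's argument, whose appeal to that lemma tacitly requires $\e$ to be an E-measure.
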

            \begin{proof}
                The first equality follows from the identity $\sup_{c > 0} c\ind{f(P) \geq c} = f(P)$, $P \in \P$ (Lemma \ref{lem:dirac}).
                The second equality follows from interchanging supremum and E-integration (Lemma \ref{lem:sup_interchange}).
                The third equality follows from positive homogeneity (Lemma \ref{lem:pos_hom}) and the behavior on indicators (Lemma \ref{lem:indicator}).
            \end{proof}

            \begin{rmk}
                Note the relationship to the classical Markov's inequality $\int f\, dP \geq cP(\{f \geq c\})$, which is recovered by replacing $\e$ by $1/P$ and $\int^E f\, d\e$ by $\int f\, dP$.
            \end{rmk}
            
   \section{Canonical extension to the power set}\label{appn:canonical_extension}
        Due to the antitonicity of an E-capacity $\e$, its value $\e(H_P)$ at a least hypothesis $H_P$ can be interpreted as the `most evidence' against $P$.
        Using the closure property of E-measures, we can use this idea to obtain a canonical extension of an E-measure $\e$ to the full power set $2^\P$ of $\P$:
        \begin{align*}
            \e^\P(H)
                := \inf_{P \in H} \e(H_P), \quad \textnormal{ for every } H \subseteq \P.
        \end{align*}
        This extension is canonical in the sense that it is the smallest E-capacity extension of $\e$ to $2^\P$.

        \begin{prp}[Extension to $2^\P$]\label{prp:extension}
            Suppose $(\P, \H)$ is intersection-closed.
            Let $\e$ be an E-measure on $\H$.
            Then,
            \begin{itemize}
                \item $\e^\P$ is an E-measure on $2^\P$,
                \item $\e^\P$ coincides with $\e$ on $\H$,
                \item $\e^\P$ is dominated by any E-capacity that coincides with $\e$ on $\H$.
            \end{itemize}
        \end{prp}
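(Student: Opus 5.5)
The plan is to verify the three bullets directly from the definition $\e^\P(H) = \inf_{P \in H} \e(H_P)$. The two structural inputs are the canonical cover of Lemma \ref{lem:canonical_cover} and the Closure axiom for $\e$ on $\H$. I read ``dominated by'' in the sense of the paper's Definition of domination: $\e'(H) \geq \e^\P(H)$ for every $H \subseteq \P$.

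\emph{First bullet.} Let $\S \subseteq 2^\P$ be arbitrary. Since an infimum over a union of index sets is the infimum of the infima,
\begin{align*}
    \e^\P\Big(\bigcup_{A \in \S} A\Big)
        = \inf_{P \in \bigcup_{A \in \S} A} \e(H_P)
        = \inf_{A \in \S} \inf_{P \in A} \e(H_P)
        = \inf_{A \in \S} \e^\P(A).
\end{align*}
When $\S = \emptyset$, both sides equal $\inf \emptyset = \infty$, so $\e^\P(\emptyset) = \infty$. Hence $\e^\P$ is an E-measure on $2^\P$.

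\emph{Second bullet.} Take $H \in \H$. By Lemma \ref{lem:intersection_closed} the least hypotheses exist and lie in $\H$. By Lemma \ref{lem:canonical_cover}, $H = \bigcup_{P \in H} H_P$. Applying the Closure axiom for $\e$ to the subfamily $\{H_P : P \in H\} \subseteq \H$ gives
\begin{align*}
    \e(H) = \inf_{P \in H} \e(H_P) = \e^\P(H).
\end{align*}
This is the same computation as in Theorem \ref{thm:closure_least_true_hypothesis}, combined with $\overline{\e} = \e$ from Theorem \ref{thm:smallest_dominating_E-measure}.

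\emph{Third bullet.} This is the step needing an idea, since $\e'$ is only assumed to be an E-capacity on $2^\P$ that agrees with $\e$ on $\H$. Fix an arbitrary $H \subseteq \P$, not necessarily in $\H$. I would ``round $H$ up'' to the $\H$-measurable set $H' := \bigcup_{P \in H} H_P$. This set lies in $\H$ by union-closure, and it contains $H$ because $P \in H_P$ for each $P$.

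Antitonicity of $\e'$ under $H \subseteq H'$ then gives
\begin{align*}
    \e'(H) \geq \e'(H') = \e(H') = \inf_{P \in H} \e(H_P) = \e^\P(H).
\end{align*}
The first equality uses that $\e'$ coincides with $\e$ on $\H$. The second uses the Closure axiom for $\e$ applied to the cover $\{H_P : P \in H\}$.

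The main point to check is that $H'$ is indeed in $\H$ and contains $H$, so that both the agreement on $\H$ and antitonicity can be invoked. Both facts follow from union-closure and $P \in H_P$. This yields $\e' \geq \e^\P$ on all of $2^\P$.
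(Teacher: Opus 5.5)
Your proposal is correct and follows the paper's proof essentially step for step. The first bullet is the infimum-over-a-union identity, and the second is the canonical cover combined with the Closure axiom. For the third, you round $A$ up to $\bigcup_{P \in A} H_P \in \H$ and combine antitonicity with agreement on $\H$, which is exactly the paper's $U_A$ argument.
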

        \begin{proof}
            To show that $\e^\P$ is an E-measure, note that for every $\A \subseteq 2^\P$,
            \begin{align*}
                \e^\P\left(\bigcup_{A \in \A} A\right)
                    = \inf_{P \in \cup_{A \in \A} A} \e(H_P)
                    = \inf_{A \in \A} \inf_{P \in A} \e(H_P)
                    = \inf_{A \in \A} \e^\P(A).
            \end{align*}
            
            To show that it extends $\e$ on $\H$, let $H \in \H$.
            By Lemma \ref{lem:canonical_cover}, $H = \cup_{P \in H} H_P$.
            Then, since $\e$ is an E-measure,
            \begin{align*}
                \e(H)
                    = \e\left(\bigcup_{P \in H} H_P\right)
                    = \inf_{P \in H} \e(H_P)
                    = \e^\P(H).
            \end{align*}

            Finally, we show that any E-capacity that extends $\e$ dominates $\e^\P$.
            Let $\widetilde{\e}$ be some E-capacity on $2^\P$ that coincides with $\e$ on $\H$.
            Fix a subset $A \subseteq \P$ and define
            \begin{align*}
                U_A := \bigcup_{P \in A} H_P.
            \end{align*}
            
            As $\H$ is closed under unions, $U_A \in \H$.
            Since $\widetilde{\e}$ coincides with $\e$ on $\H$, this implies $\widetilde{\e}(U_A) = \e(U_A)$.
            Now, because $\e$ is an E-measure, this yields
            \begin{align}\label{eq:proof_extension}
                \widetilde{\e}(U_A)
                    = \e(U_A)
                    = \inf_{P \in A} \e(H_P)
                    = \e^\P(A).
            \end{align}

            As $P \in H_P$ for every $P \in \P$, we have that $A \subseteq U_A$.
            The fact that $\widetilde{\e}$ is an E-capacity on $2^\P$ therefore implies $\widetilde{\e}(A) \geq \widetilde{\e}(U_A)$.
            Combining this with \eqref{eq:proof_extension} yields
            \begin{align*}
                \widetilde{\e}(A) \geq \e^\P(A).
            \end{align*}
            Since $A \subseteq \P$ was arbitrary, this proves the final claim.
        \end{proof}
        \begin{rmk}
            We stress that this extension $\e^\P$ does not add any information compared to the original E-measure $\e$: it is merely a different representation of $\e$ if $(\P, \H)$ is intersection-closed.
        \end{rmk}

    \section{Omitted proofs}
        \subsection{Proof of Lemma \ref{lem:intersection_closed}}
            \begin{proof}
                First suppose that in $(\P,\H)$ every $P \in \P$ has a least hypothesis $H_P$.
                Since $P \in H_P$ for every $P \in \P$, union closure implies
                \begin{align*}
                    \P = \bigcup_{P \in \P} H_P \in \H.
                \end{align*}

                Now let $\S \subseteq \H$ and write $I := \bigcap_{H \in \S} H$.
                If $P \in I$, then $P \in H$ for every $H \in \S$.
                By leastness, $H_P \subseteq H$ for every $H \in \S$, and hence $H_P \subseteq I$.
                This shows
                \begin{align*}
                \bigcup_{P \in I} H_P \subseteq I.
                \end{align*}
                The reverse inclusion follows from $P \in H_P$ for every $P \in I$, so that $I = \bigcup_{P \in I} H_P$.
                As every $H_P \in \H$ and $\H$ is union-closed, it follows that $I \in \H$.
                Hence, $\H$ is intersection-closed.

                For the converse, suppose that $\H$ is closed under arbitrary intersections and $\P \in \H$.
                For every $P \in \P$, define
                \begin{align*}
                    H_P := \bigcap_{H \in \H : H \ni P} H.
                \end{align*}
                This is well-defined since $\P \in \H$ and so every $P$ is an element of some $H \in \H$.
                Moreover, $H_P \in \H$ by intersection-closure.
                Clearly $P \in H_P$, and if $P \in H \in \H$, then $H_P \subseteq H$ by construction.
                Hence $H_P$ is the least hypothesis containing $P$.
            \end{proof}

        \subsection{Proof of Lemma \ref{lem:canonical_cover}}
            \begin{proof}
                First, let us assume that $(\P, \H)$ is intersection-closed with least hypotheses $\{H_P : P \in \P\}$.
                By definition, we have $P \in H_P$.
                Now, fix a hypothesis $H \in \H$.
                If $P \in H$, then $H_P \subseteq H$ by leastness of $H_P$, so
                \begin{align*}
                    \bigcup_{P \in H} H_P \subseteq H.
                \end{align*}
                On the other hand, if $P' \in H$ then $P' \in H_{P'}$ so that
                \begin{align*}
                    \bigcup_{P \in H} H_P \supseteq H.
                \end{align*}
                Combining the two yields equality.

                For the converse, suppose that $H_P \in \H$ and $P \in H_P$ for every $P \in \P$, and that
                \begin{align}\label{eq:union}
                    H = \bigcup_{P' \in H} H_{P'}, \quad H \in \H.
                \end{align}
                Fix $P$ and let $H \in \H$ be some arbitrary hypothesis that contains $P$.
                Since $P \in H$, the set $H_P$ appears as one of the sets in the union \eqref{eq:union}, so that $H_P \subseteq H$.
                As $P \in H_P$, this shows that $H_P$ is the least hypothesis in $\H$ that contains $P$.
                Since this holds for every $P \in \P$, every $P$ has a least hypothesis in $\H$.
                By Lemma \ref{lem:intersection_closed}, we have that $(\P,\H)$ is intersection-closed.
            \end{proof}
            
        \subsection{Proof of Proposition \ref{prp:order_least_true}}
            \begin{proof}
                We first prove the `$\impliedby$'-direction: suppose that $\H = \H_\precsim$, for some preorder $\precsim$.
                
                Fix $P \in \P$, and note that $P \in H_\precsim(P)$ by reflexivity of $\precsim$.
                We now show that $H_\precsim(P)$ is contained in every hypothesis $H \in \H_\precsim$ that contains $P$.
                Let $H \in \H_\precsim$ be such a hypothesis that contains $P$.
                By construction of $\H_\precsim$, $H$ is a union of generating hypotheses:
                \begin{align*}
                    H = \bigcup_{P' \in A} H_\precsim(P'),
                \end{align*}
                for some $A \subseteq \P$.
                Since $P \in H$, there must exist some $P' \in A$ such that $P \in H_\precsim(P')$.
                By definition of $H_\precsim(P')$, this means $P' \precsim P$.

                Now, pick an arbitrary $Q \in H_\precsim(P)$.
                Then, $P \precsim Q$, so that by transitivity
                \begin{align*}
                    P' \precsim P \precsim Q.
                \end{align*}
                Hence, $Q \in H_\precsim(P')$.
                Since $Q$ was arbitrary, this holds for every $Q \in H_\precsim(P)$ and so
                \begin{align*}
                    H_\precsim(P) 
                        \subseteq H_\precsim(P') 
                        \subseteq H.
                \end{align*}
                Since $H$ was an arbitrary hypothesis in $\H_\precsim$ containing $P$, every hypothesis $H \in \H_\precsim$ that contains $P$ must contain $H_\precsim(P)$.
                That is, $H_\precsim(P)$ is the least hypothesis in $\H_\precsim$ containing $P$.
                By Lemma \ref{lem:intersection_closed}, this implies $(\P, \H)$ is intersection-closed.

                We now prove the `$\implies$'-direction: suppose that $(\P, \H)$ is intersection-closed, with least hypotheses $H_P$, $P \in \P$.
                Define the relation $\precsim_\H$ on $\P$ by
                \begin{align*}
                    P \precsim_\H P'
                        \iff P' \in H_P.
                \end{align*}
                
                Reflexivity of $\precsim_\H$ now follows from $P \in H_P$.
                For transitivity, suppose that $P \precsim_\H P'$ and $P' \precsim_\H Q$.
                Then, $P' \in H_P$ and $Q \in H_{P'}$.
                Since $P' \in H_P$, the leastness of $H_{P'}$ implies $H_{P'} \subseteq H_P$.
                Hence $Q \in H_P$, and so $P \precsim_\H Q$.
                Combining reflexivity and transitivity, we conclude that $\precsim_\H$ is a preorder.

                To show that $\H = \H_{\precsim_\H}$, note that $H_{\precsim_\H}(P) = \{P' \in \P : P \precsim_\H P'\} = \{P' \in \P : P' \in H_P\} = H_P$ for every $P \in \P$.
                Hence, the generating family of $\H_{\precsim_\H}$ coincides with the least hypotheses.
                By Lemma \ref{lem:canonical_cover}, $\H$ is the union-closure of the least hypotheses, so that $\H = \H_{\precsim_\H}$.
            \end{proof}
    
        \subsection{Proof of Proposition \ref{prp:induced_E-measure}}
            \begin{proof}
                Consider an arbitrary collection of hypotheses $\S \subset \H$, and define $U := \cup_{H \in \S} H$.
                To prove that $\overline{\e}$ is an E-measure, we need to show that $\overline{\e}(U) = \inf_{H \in \S} \overline{\e}(H)$.
                We proceed by proving both $\leq$ and $\geq$ to conclude equality.

                The ``$\leq$''-direction is straightforward.
                For every $H \in \S$ we have $H \subseteq U$, so that every cover of $U$ is also a cover of $H$.
                The definition of $\overline{\e}$ then implies $\overline{\e}(U) \leq \overline{\e}(H)$.
                Taking the infimum over $H \in \S$ yields
                \begin{align*}
                    \overline{\e}(U)
                        \leq \inf_{H \in \S} \overline{\e}(H).
                \end{align*}

                We continue with the ``$\geq$''-direction.
                We fix an arbitrary number $a < \inf_{H \in \S} \overline{\e}(H)$.
                Our strategy is to show that $\overline{\e}(U) \geq a$.
                As this can be done for every such $a$, we can conclude $\overline{\e}(U) \geq \inf_{H \in \S} \overline{\e}(H)$.

                For each $H \in \S$ we have $a < \overline{\e}(H)$, since $a < \inf_{H \in \S} \overline{\e}(H)$.
                By the supremum in the definition of $\overline{\e}$, this means there must be at least one cover $\T_H \in \C(H)$ such that $a < \inf_{H' \in \T_H} \e(H')$; otherwise $\overline{\e}(H) = \sup_{\T \in \C(H)} \inf_{H' \in \T} \e(H') \leq a$.

                Now define the union $\T := \bigcup_{H \in \S} \T_H$ of such covers.
                As every $\T_H$ is a cover of $H$, this union $\T$ covers $U$: $\T \in \C(U)$.
                Moreover, every hypothesis $H^\dag \in \T$ lies in some cover $\T_{H^1}$.
                For such a cover with $H^\dag \in \T_{H^1}$, we have
                \begin{align*}
                    \e(H^\dag) 
                        \geq \inf_{H' \in \T_{H^1}} \e(H') > a.
                \end{align*}
                As a consequence, $\inf_{H^\dag \in \T} \e(H^\dag) \geq a$.
                Now, since $\overline{\e}(U)$ is the supremum over all covers of $U$, and $\T$ is such a cover, we obtain $\overline{\e}(U) \geq a$.
                Since this holds for arbitrary $a < \inf_{H \in \S} \overline{\e}(H)$, we finish the ``$\geq$''-direction: $\overline{\e}(U) \geq \inf_{H \in \S} \overline{\e}(H)$.
            \end{proof}
            
        \subsection{Proof of Theorem \ref{thm:smallest_dominating_E-measure}}
            \begin{proof}
                We first show that $\overline{\e}$ dominates $\e$. 
                For any $H \in \H$, the singleton $\S = \{H\}$ belongs to $\C(H)$, so that
                \begin{align*}
                    \overline{\e}(H) \geq \inf_{H'\in \{H\}} \e(H') = \e(H).
                \end{align*}

                Now let $\widetilde{\e}$ be an arbitrary E-measure that dominates $\e$: $\widetilde{\e} \geq \e$.
                Fix a hypothesis $H \in \H$, and consider a cover $\T \in \C(H)$ of $H$.
                Since $H \subseteq \cup_{H' \in \T} H'$ and every E-measure is an E-capacity, we have
                \begin{align*}
                    \widetilde{\e}(H)
                        \geq \widetilde{\e}\left(\bigcup_{H' \in \T} H'\right)
                        = \inf_{H' \in \T} \widetilde{\e}(H')
                        \geq \inf_{H' \in \T} \e(H').
                \end{align*}
                Taking the supremum over all $\T \in \C(H)$ yields $\widetilde{\e}(H) \geq \overline{\e}(H)$.
                Hence, $\overline{\e}$ is the smallest E-measure that dominates $\e$.
            \end{proof}

        \subsection{Proof of Theorem \ref{thm:closure_least_true_hypothesis}}
            \begin{proof}
                Our strategy is to show both `$\geq$' and `$\leq$' to prove equality.

                For the $\geq$-direction, Lemma \ref{lem:canonical_cover} allows us to express intersection-closure as
                \begin{align*}
                    H = \bigcup_{P \in H} H_P.
                \end{align*}
                This means that the collection $\T_H := \{H_P : P \in H\}$ is a cover of $H$: $\T_H \in \C(H)$.
                By definition of the closure $\overline{\e}$, this implies
                \begin{align*}
                    \overline{\e}(H) 
                        \equiv \sup_{\T \in \C(H)} \inf_{H' \in \T} \e(H')
                        \geq \inf_{H' \in \T_H} \e(H')
                        = \inf_{P \in H} \e(H_P).
                \end{align*}

                For the $\leq$-direction, it is more convenient to use Lemma \ref{lem:intersection_closed}.
                Let $\T \in \C(H)$ be some cover of $H$.
                For every $P \in H$, we then have that there exists some $H' \in \T$ such that $P \in H'$.
                Lemma \ref{lem:intersection_closed} implies $H_P \subseteq H'$.
                Applying the antitonicity property of the capacity $\e$, we obtain $\e(H') \leq \e(H_P)$.
                Now, since $\inf_{H^\dag \in \T} \e(H^\dag) \leq \e(H')$, this implies $\inf_{H^\dag \in \T} \e(H^\dag) \leq \e(H_P)$, for every $P \in H$.
                Taking the infimum over $P \in H$ yields $\inf_{H^\dag \in \T} \e(H^\dag) \leq \inf_{P \in H} \e(H_P)$.
                Finally, as this holds for every cover $\T \in \C(H)$, taking the supremum over all covers yields
                \begin{align*}
                    \overline{\e}(H)
                        \equiv \sup_{\T \in \C(H)} \inf_{H^\dag \in \T} \e(H^\dag) 
                        \leq \inf_{P \in H} \e(H_P).
                \end{align*}
            \end{proof}

        \subsection{Proof of Corollary \ref{cor:closure_least_true}}
            \begin{proof}
                We first show that $\overline{\e}(H_P) = \e(H_P)$, by showing both $\geq$ and $\leq$.
                By Theorem \ref{thm:closure_least_true_hypothesis}, we have that
                \begin{align*}
                    \overline{\e}(H_P)
                        = \inf_{P' \in H_P} \e(H_{P'}). 
                \end{align*}
                If $P' \in H_P$, then by the leastness of $H_{P'}$, we have $H_{P'} \subseteq H_P$.
                Since $\e$ is an E-capacity, $\e(H_{P'}) \geq \e(H_P)$, so that
                \begin{align*}
                    \inf_{P' \in H_P} \e(H_{P'}) 
                        \geq \e(H_P).
                \end{align*}
                Conversely, as $P \in H_P$, we have that $P$ appears in the infimum, so that
                \begin{align*}
                    \inf_{P' \in H_P} \e(H_{P'}) \leq \e(H_P).
                \end{align*}

                For the uniqueness, suppose that $\widetilde{\e}$ is some E-measure that coincides with $\e$ on the least hypotheses.
                Let $H$ be an arbitrary hypothesis $H \in \H$.
                By Lemma \ref{lem:canonical_cover}, $H = \bigcup_{P \in H} H_P$.
                Hence, by the E-measure property and coincidence on least hypotheses,
                \begin{align*}
                    \widetilde{\e}(H)
                        = \widetilde{\e}\left(\bigcup_{P \in H} H_P\right)
                        = \inf_{P \in H} \widetilde{\e}(H_P)
                        = \inf_{P \in H} \e(H_P).
                \end{align*}
                From Theorem \ref{thm:closure_least_true_hypothesis}, we have $\inf_{P \in H} \e(H_P) = \overline{\e}(H)$.
                Hence, $\widetilde{\e}(H) = \overline{\e}(H)$, for every $H \in \H$.
            \end{proof}

        \subsection{Proof of Proposition \ref{prp:merging}}
            \begin{proof}
                Let $(\e_i)_{i \in \I}$ denote the collection of E-capacity kernels and denote the convex average by
                \begin{align*}
                    \e = \sum_{i \in \I} \lambda_i \e_i,
                \end{align*}
                for a collection of weights $(\lambda_i)_{i \in \I}$, $\sum_{i \in \I} \lambda_i = 1$, $\lambda_i \geq 0$, with an at-most countable index set $\I$.
                
                Fix a hypothesis $H \in \H$.
                As the E-variable $x \mapsto \e_i(H \mid x)$ is $\Sigma$-measurable for every $i$, and at most countable non-negatively weighted sums of measurable functions are measurable, $\e$ is also $\Sigma$-measurable.
                The validity of $\e$ follows from the validity of an at-most-countable weighted average of the E-variables $(\e_i(\cdot \mid x))_{i \in \I}$, which follows from the closure of expectations under such weighted averages.
                
                Now, for the E-capacity property, fix $x \in \X$.
                As every $\e_i(\cdot \mid x)$ is an E-capacity, we have $\e_i(\emptyset \mid x) = \infty$ and, whenever $H \subseteq H'$, $\e_i(H' \mid x) \leq \e_i(H \mid x)$.
                This implies both $\e(\emptyset \mid x) = \sum_{i \in \I} \lambda_i \e_i(\emptyset \mid x) = \infty$ and
                \begin{align*}
                    \e(H' \mid x)
                        = \sum_{i \in \I} \lambda_i \e_i(H' \mid x)
                        \leq \sum_{i \in \I} \lambda_i \e_i(H \mid x)
                        = \e(H \mid x),
                \end{align*}
                so that $\e(\cdot \mid x)$ is also an E-capacity.    
            \end{proof}

        \subsection{Proof of Proposition \ref{prp:post-hoc}}
            \begin{proof}
                Fix $H \in \H^\emptyset$ and $P \in H$.
                By definition of $C_\alpha$, we have that for every $\alpha \in [0, \infty]$,
                \begin{align*}
                    \ind{H \not\in C_\alpha(X)}
                        = \ind{\e(H \mid X) \geq 1/\alpha}.
                \end{align*}

                We first prove the `only if'-direction: suppose that $\e$ is valid and let $\widetilde{\alpha} \in [0, \infty]$ be some data-dependent level.
                Then,
                \begin{align*}
                    \frac{\ind{H \not\in C_{\widetilde{\alpha}}(X)}}{\widetilde{\alpha}}
                        = \frac{\ind{\e(H \mid X) \geq 1/\widetilde{\alpha}}}{\widetilde{\alpha}}
                        \leq \e(H \mid X).
                \end{align*}
                Taking expectations, using the tower property, linearity of expectations and $\sigma(\widetilde{\alpha})$-measurability of $\widetilde{\alpha}$ yields
                \begin{align*}
                    \Ex^P\left[\frac{P(H \not\in C_{\widetilde{\alpha}} \mid \widetilde{\alpha})}{\widetilde{\alpha}}\right]
                        &= \Ex^P\left[\frac{\Ex^P[\ind{H \not\in C_{\widetilde{\alpha}}(X)} \mid \widetilde{\alpha}]}{\widetilde{\alpha}}\right] \\
                        &= \Ex^P\left[\Ex^P\left[\frac{\ind{H \not\in C_{\widetilde{\alpha}}(X)}}{\widetilde{\alpha}}\, \middle| \, \widetilde{\alpha}\right]\right] \\
                        &= \Ex^P\left[\frac{\ind{H \not\in C_{\widetilde{\alpha}}(X)}}{\widetilde{\alpha}}\right]
                        \leq \Ex^P[\e(H \mid X)]
                        \leq 1.
                \end{align*}

                For the converse, suppose that \eqref{ineq:post-hoc_valid_confidence_set} holds for every data-dependent level $\widetilde{\alpha} \in [0, \infty]$.
                This means it holds in particular for the choice $\widetilde{\alpha} = 1 / \e(H \mid X)$, for which we have
                \begin{align*}
                    \frac{\ind{H \not\in C_{\widetilde{\alpha}}(X)}}{\widetilde{\alpha}}
                        = \frac{\ind{\e(H \mid X) \geq 1/\widetilde{\alpha}}}{\widetilde{\alpha}}
                        = \frac{\ind{\e(H \mid X) \geq \e(H \mid X)}}{1/\e(H \mid X)}
                        = \e(H \mid X).
                \end{align*}
                As a consequence,
                \begin{align*}
                    \Ex^P[\e(H \mid X)]
                        = \Ex^P\left[\frac{\ind{H \not\in C_{\widetilde{\alpha}}(X)}}{\widetilde{\alpha}}\right]
                        = \Ex^P\left[\frac{P(H \not\in C_{\widetilde{\alpha}} \mid \widetilde{\alpha})}{\widetilde{\alpha}}\right]
                        \leq 1,
                \end{align*}
                so that $x \mapsto \e(H \mid x)$ is a valid E-variable for $H$.

                The result then follows from the fact that $H \in \H^\emptyset$ and $P \in H$ were arbitrary.
            \end{proof}

        \subsection{Proof of Proposition \ref{prp:smallest_relevant}}
            \begin{proof}
                By Remark \ref{rmk:HL_least_hypothesis}, $\H_L = \H_{\succsim_L}$.
                This implies that $(\P, \H_L)$ is intersection-closed by Proposition \ref{prp:order_least_true}.
                
                We now show that $H_{d,c} \in \H_L$ for every $d \in \D$ and $c \in \C$.
                Since $\H_L = \H_{\succsim_L}$, it suffices to show that $H_{d,c}$ is an upper set for $\succsim_L$.
                Let $P \in H_{d,c}$ and suppose that $P' \succsim_L P$.
                Then $L_{P'}(d) \succsim L_P(d) \succsim c$ and so $P' \in H_{d,c}$.
                Hence $H_{d,c}$ is an upper set for $\succsim_L$, so $H_{d,c} \in \H_L$.

                It remains to show that $\H_L$ is the smallest hypothesis class with these properties.
                Let $\widetilde{\H}$ be any hypothesis class that contains every $H_{d,c}$ and assume that $(\P, \widetilde{\H})$ is intersection-closed.
                By Proposition \ref{prp:order_least_true}, $\widetilde{\H} = \H_{\succsim}$ for some preorder $\succsim$ on $\P$.
            
                Since every set in $\H_{\succsim}$ is an upper set for $\succsim$, each $H_{d,c}$ is an upper set for $\succsim$. 
                Hence, if $P' \succsim P$, then for every $d \in \D$ we have $P \in H_{d,L_P(d)}$, so also $P' \in H_{d,L_P(d)}$, which implies
                \begin{align*}
                    L_{P'}(d) \succsim L_P(d), \qquad \textnormal{for every } d \in \D.
                \end{align*}
                This shows that $P' \succsim P$ implies $P' \succsim_L P$. 
                This implies that an upper set for $\succsim_L$ is also an upper set for $\succsim$, so that $\H_L = \H_{\succsim_L} \subseteq \H_{\succsim} = \widetilde{\H}$.
                As a consequence, $\H_L$ is the smallest such hypothesis class.
            \end{proof}

        \subsection{Proof of Lemma \ref{lem:order_measurable}}
            \begin{proof}
                For convenience, we write $f(P) := L_P$.
                For every $\ell \in \L$, the principal upper set $U_\ell := \{\ell' \in \L : \ell' \succsim_\L \ell\}$ satisfies
                \begin{align*}
                    f^{-1}(U_\ell) = \{P \in \P : L_P \succsim_\L \ell\} = H_\ell.
                \end{align*}
                Hence, if $f$ is $(\H,\H_{\succsim_\L})$-measurable, then $H_\ell \in \H$ for every $\ell \in \L$, and therefore $\H_L \subseteq \H$ by closure under unions.
            
                For the converse, assume $\H_L \subseteq \H$.
                Every $A \in \H_{\succsim_\L}$ is the union of the principal upper sets it contains: $A = \bigcup_{\ell \in A} U_\ell$.
                As a consequence,
                \begin{align*}
                    f^{-1}(A) = \bigcup_{\ell \in A} f^{-1}(U_\ell) = \bigcup_{\ell \in A} H_\ell \in \H_L \subseteq \H.
                \end{align*}
                Hence, $f$ is $(\H,\H_{\succsim_\L})$-measurable.
            \end{proof}

        \subsection{Proof of Proposition \ref{prp:d_loss_uniform}}
            \begin{proof}
                Fix $d \in \D$.
                For every $c > 0$, we have $H_{d,c} = \{P' \in \P : L_{P'}(d) \geq c\} \in \H_L$ by Proposition \ref{prp:smallest_relevant}, so that $P \mapsto L_P(d)$ is $\H_L$-measurable.

                As $(\P, \H_L)$ is intersection-closed, with least hypotheses $H_{L_P}$, $P \in \P$, we can apply Proposition \ref{prp:least_true_integration} to the function $P \mapsto L_P(d)$:
                \begin{align*}
                    \int^E L_P(d) \, d\e(P)
                        = \sup_{P \in \P} \frac{L_P(d)}{\e(H_{L_P})}.
                \end{align*}

                Again using Proposition \ref{prp:smallest_relevant}, we have $H_{d,L_P(d)} \in \H_L$ for every $P \in \P$.
                Since $\e$ is an E-measure, Theorem \ref{thm:closure_least_true_hypothesis} implies
                \begin{align*}
                    \e(H_{d,L_P(d)})
                        = \inf_{P' \in H_{d,L_P(d)}} \e(H_{L_{P'}}).
                \end{align*}
                As a consequence,
                \begin{align*}
                    \sup_{P \in \P} \frac{L_P(d)}{\e(H_{d, L_P(d)})}
                        = \sup_{P \in \P} \sup_{P' \in H_{d, L_P(d)}} \frac{L_P(d)}{\e(H_{L_{P'}})}.
                \end{align*}
                By definition of $H_{d, L_P(d)}$, $P' \in H_{d, L_P(d)}$ is equivalent to $L_{P'}(d) \geq L_P(d)$.
                Hence,
                \begin{align*}
                    \sup_{P \in \P} \sup_{P' \in H_{d, L_P(d)}} \frac{L_P(d)}{\e(H_{L_{P'}})}
                        &= \sup_{P \in \P} \sup_{P' : L_{P'}(d) \geq L_{P}(d)} \frac{L_P(d)}{\e(H_{L_{P'}})} \\
                        &= \sup_{P, P': L_{P'}(d) \geq L_{P}(d)} \frac{L_P(d)}{\e(H_{L_{P'}})} \\
                        &= \sup_{P' \in \P} \sup_{P : L_{P'}(d) \geq L_{P}(d)} \frac{L_P(d)}{\e(H_{L_{P'}})} \\
                        &= \sup_{P' \in \P} \frac{L_{P'}(d)}{\e(H_{L_{P'}})}.
                \end{align*}
            \end{proof}

        \subsection{Proof of Theorem \ref{thm:familywise}}
            \begin{proof}
                Let $H_P$, $P \in \P$ denote the least hypotheses of $\H$.
                By definition of least, we have that if $H \ni P$ then $H_P \subseteq H$.
                By antitonicity of E-capacities, this implies
                \begin{align*}
                    \e(H \mid x) \leq \e(H_P \mid x).
                \end{align*}
                Since $H_P \ni P$, the supremum is attained at $H_P$:
                \begin{align*}
                    \sup_{H \ni P} \e(H \mid x) = \e(H_P \mid x).
                \end{align*}
                Moreover, $x \mapsto \sup_{H \ni P} \e(H \mid x)$ is measurable since $x \mapsto \e(H_P \mid x)$ is measurable.
                Hence, familywise evidence control is equivalent to
                \begin{align}\label{ineq:proof_familywise}
                    \sup_{P \in \P} \Ex^P\left[\e(H_P \mid X)\right] \leq 1.
                \end{align}

                Now, if $\e$ is valid, then by definition $x \mapsto \e(H_P \mid x)$ is valid for $H_P$, since $P \in H_P$.
                Taking the supremum over $P \in \P$ then proves familywise evidence control.
                
                Conversely, suppose $\e$ controls the familywise evidence.
                Consider a hypothesis $H \ni P$.
                Since $H \supseteq H_P$, antitonicity implies $\e(H \mid x) \leq \e(H_P \mid x)$, for every $P \in H$.
                Taking the expectation under $P$ and using \eqref{ineq:proof_familywise} implies the validity of $\e$.
            \end{proof}

        \subsection{Proof of Theorem \ref{thm:FER}}
            \begin{proof}
                Fix $x \in \X$ and $P \in \P$.
                By Lemma \ref{lem:intersection_closed}, $H \ni P$ implies $H_P \subseteq H$.
                By the antitonicity of E-capacities, $\e(H \mid x) \leq \e(H_P \mid x)$.
                As a consequence,
                \begin{align*}
                    \textnormal{FEP}_P^S(x)
                        \leq \frac{1}{|S(x)| \vee 1} \sum_{H \in S(x) : H \ni P} \e(H_P \mid x).
                \end{align*}
                Now, $\sum_{H \in S(x) : H \ni P} \e(H_P \mid x) = |S(x) \cap \{H : H \ni P\}| \e(H_P \mid x)$, since $\e(H_P \mid x)$ is constant in $H \ni P$.
                This implies
                \begin{align*}
                    \frac{1}{|S(x)| \vee 1} \sum_{H \in S(x) : H \ni P} \e(H_P \mid x)
                        &= \frac{|S(x) \cap \{H : H \ni P\}|}{|S(x)| \vee 1} \e(H_P \mid x) \\
                        &= \textnormal{FSP}_P^S(x) \e(H_P \mid x).
                \end{align*}
            \end{proof}

        \subsection{Proof of Theorem \ref{thm:general_multiplicity}}
            \begin{proof}
                Fix $P \in \P$.
                By locality, $\Phi_P(\e) = \Phi_P(\e 1_P)$.
                By antitonicity of E-capacities,
                \begin{align*}
                    \e(H') 1_P(H') \leq \e(H_P) 1_P(H'),
                    \quad H' \in \H,
                \end{align*}
                since both sides are $0$ if $H' \not\ni P$, while if $H' \ni P$ then $H_P \subseteq H'$ and hence $\e(H_P) \geq \e(H')$.
                Hence, $\e 1_P \le \e(H_P) 1_P$.
                By monotonicity and positive homogeneity of $\Phi_P$, we obtain
                \begin{align*}
                    \Phi_P(\e)
                        = \Phi_P(\e 1_P)
                        \leq \Phi_P(\e(H_P) 1_P)
                        = \e(H_P) \Phi_P(1_P).
                \end{align*}
            \end{proof}
            
        \subsection{Proof of Proposition \ref{prp:E-posterior}}
            \begin{proof}
                For every $H \in \H$, the map
                \begin{align*}
                    x \mapsto \e_2(H \mid x) = \e_0(H)\e_1(H \mid x)
                \end{align*}
                is $\Sigma$-measurable.
                For every $x \in \X$, the map $H \mapsto \e_2(H \mid x)$ is an E-capacity, since it is the pointwise product of two E-capacities.
                
                Now fix $H \in \H^\emptyset$.
                Since $\e_0(H)$ does not depend on $x$,
                \begin{align*}
                    \sup_{P \in H} \Ex^P[\e_2(H \mid X)]
                        &= \sup_{P \in H} \Ex^P[\e_0(H)\e_1(H \mid X)] \\
                        &= \e_0(H)\sup_{P \in H} \Ex^P[\e_1(H \mid X)] \\
                        &\leq \e_0(H),
                \end{align*}
                because $\e_1$ is valid.
            \end{proof}
            
        \subsection{Proof of Proposition \ref{prp:closed_E-posterior}}
            \begin{proof}
                The map $H \mapsto \e_0(H) \e_1(H \mid x)$ is an E-capacity, as $\e_0$ and $H \mapsto \e_1(H \mid x)$ are E-capacities.
                Theorem \ref{thm:closure_least_true_hypothesis} then implies the closure equals
                \begin{align}\label{eq:proof_closed_E-posterior}
                    \e_2(H \mid x)
                        = \inf_{P' \in H} \e_0(H_{P'})\e_1(H_{P'} \mid x).
                \end{align}
                By the countability assumption, measurability follows exactly as in Theorem \ref{thm:closure_pointwise_validity}, so $\e_2$ is an E-measure kernel.
            
                Now fix $H \in \H^\emptyset$ and $P \in H$.
                Since $P$ is one of the indices in the infimum in \eqref{eq:proof_closed_E-posterior}, $\e_2(H \mid x) \leq \e_0(H_P)\e_1(H_P \mid x)$.
                Taking expectations under $P$ gives
                \begin{align*}
                    \Ex^P[\e_2(H \mid X)]
                        &\leq \e_0(H_P)\Ex^P[\e_1(H_P \mid X)] \\
                        &\leq \e_0(H_P),
                \end{align*}
                since $P \in H_P$ and $\e_1$ is valid.
            \end{proof}

        \subsection{Proof of Theorem \ref{thm:predictive_kernel_least_true}}
            \begin{proof}
                The equality $\sup_{H \in \H : H \ni x} \e(H \mid x) = \e(H_x \mid x)$ follows directly from the definition of $H_x$ and the fact that $\e$ is an E-capacity.
                Indeed, this implies $\e(H \mid x) \leq \e(H_x \mid x)$ for every hypothesis $H \supseteq H_x$, so that the supremum is attained because $x \in H_x$.

                For the $\H$-measurability, fix $c \geq 0$.
                Then,
                \begin{align*}
                    \{x : \e(H_x \mid x) \geq c\}
                        &= \left\{x : \sup_{H \in \H : H \ni x} \e(H \mid x) \geq c \right\} \\
                        &= \bigcup_{H \in \H} \left(H \cap \{x : \e(H \mid x) \geq c\}\right).
                \end{align*}
                For every $H \in \H$, $x \mapsto \e(H \mid x)$ is $\Sigma$-measurable, and so
                \begin{align*}
                    \{x : \e(H \mid x) \geq c\} 
                        \in \Sigma 
                        \subseteq \H.
                \end{align*}
                Since $\H$ is a $\sigma$-algebra, $H \cap \{x : \e(H \mid x) \geq c\} \in \H$.
                As $\H$ is also a hypothesis class, it is closed under arbitrary unions, so that $\{x : \e(H_x \mid x) \geq c\} \in \H$.
                As this holds for every $c \geq 0$, $x \mapsto \e(H_x \mid x)$ is $\H$-measurable.
                
                The validity claim follows directly from the proven equality and the respective definitions of validity.
            \end{proof}

        \subsection{Proof of Lemma \ref{lem:pushforward_E-measure}}
            \begin{proof}
                Let $\S \subseteq \G$.
                Since preimages preserve unions, we have
                \begin{align*}
                    f^{-1}\left(\bigcup_{G \in \S} G\right)
                        = \bigcup_{G \in \S} f^{-1}(G).
                \end{align*}
                Hence,
                \begin{align*}
                    \e_f\left(\bigcup_{G \in \S} G\right)
                        &= \e\left(f^{-1}\left(\bigcup_{G \in \S} G\right)\right)
                        = \e\left(\bigcup_{G \in \S} f^{-1}(G)\right) \\
                        &= \inf_{G \in \S} \e(f^{-1}(G))
                        = \inf_{G \in \S} \e_f(G),
                \end{align*}
                so that $\e_f$ is an E-measure on $(\Z,\G)$.
            \end{proof}
            
        \subsection{Proof of Lemma \ref{lem:measurable_supremum}}
            \begin{proof}
                This follows from $\left\{\sup_{f \in \F} f \geq c\right\} = \bigcup_{f \in \F} \{f \geq c\}$, the order-measurability of $f$ and the closure of $\H$ under unions.
            \end{proof}
            
        \subsection{Proof of Lemma \ref{lem:pos_hom}}
            \begin{proof}
                We have
                \begin{align*}
                    \int^E af d\e
                        &= \sup_{c > 0} \frac{c}{\e(\{af \geq c\})}
                        = \sup_{c > 0} \frac{c}{\e(\{f \geq c/a\})} \\
                        &= \sup_{b > 0} \frac{ab}{\e(\{f \geq b\})}
                        = a\sup_{b > 0} \frac{b}{\e(\{f \geq b\})}
                        = a \int^E f d\e.
                \end{align*}
            \end{proof}

        \subsection{Proof of Lemma \ref{lem:indicator}}
            \begin{proof}
                We have $\{1_H \geq c\} = H$ if $0 < c \leq 1$ and $\{1_H \geq c\} = \emptyset$ if $c > 1$.
                Hence,
                \begin{align*}
                    \int^E 1_H d\e
                        = \sup_{c > 0} \frac{c}{\e(\{1_H \geq c\})}
                        = \max\left(\sup_{0 < c \leq 1} \frac{c}{\e(H)},\ \sup_{c > 1} \frac{c}{\e(\emptyset)}\right)
                        = \frac{1}{\e(H)}.
                \end{align*}
            \end{proof}

        \subsection{Proof of Lemma \ref{lem:monotonicity}}
            \begin{proof}
                As $f \geq g$, $\{f \geq c\} \supseteq \{g \geq c\}$.
                As $\e$ is an E-capacity, $\e(\{f \geq c\}) \leq \e(\{g \geq c\})$.
                Dividing $c \geq 0$ by both sides yields $c / \e(\{f \geq c\}) \geq c / \e(\{g \geq c\})$.
                Taking the supremum over $c > 0$ on both sides yields the result $
                    \int^E f d\e
                        \equiv \sup_{c > 0} c / \e(\{f \geq c\})
                        \geq \sup_{c > 0} c / \e(\{g \geq c\})
                        \equiv \int^E g d\e$.
            \end{proof}

        \subsection{Proof of Lemma \ref{lem:sup_interchange}}
            \begin{proof}
                Define the function $g = \sup_{f \in \F} f$.
                The function $g$ is order-measurable since $\{g \geq c\} = \{\sup_{f \in \F} f \geq c\} = \cup_{f \in \F} \{f \geq c\}$ and $\mathcal{H}$ is closed under unions.
                The E-capacity claim follows from applying monotonicity to $g \geq f$ and taking the supremum over $f \in \F$ on both sides.
                For E-measures, we have
                \begin{align*}
                    \int^E \sup_{f \in \F} f d\e
                        &= \sup_{c > 0} \frac{c}{\e(\cup_{f \in \F} \{f \geq c\})}
                        = \sup_{c > 0} \frac{c}{\inf_{f \in \F}\e(\{f \geq c\})} \\
                        &= \sup_{c > 0} \sup_{f \in \F}\frac{c}{\e(\{f \geq c\})}
                        = \sup_{f \in \F} \sup_{c > 0} \frac{c}{\e(\{f \geq c\})}
                        = \sup_{f \in \F} \int^E f d\e.
                \end{align*}
            \end{proof}            

        \subsection{Proof of Lemma \ref{lem:dirac}}    
            \begin{proof}
                We have,
                \begin{align*}
                    \int^E f d(1/\delta_P)
                        = \sup_{c > 0} \frac{c}{(1/\delta_P)(\{f \geq c\})}
                        = \sup_{c > 0} c\ind{f(P) \geq c}
                        = f(P).
                \end{align*}
            \end{proof}

        \subsection{Proof of Lemma \ref{lem:1-E-measure-integral}}
            \begin{proof}
                We have 
                \begin{align*}
                    \frac{c}{\mathbf{1}(\{f \geq c\})}
                        =
                        \begin{cases}
                            c, \quad \textnormal{ if } \{f \geq c\} \neq \emptyset, \\
                            0, \quad \textnormal{ if } \{f \geq c\} = \emptyset.
                        \end{cases}
                \end{align*}
                This means the E-integral becomes 
                \begin{align*}
                    \int^E f d\mathbf{1}
                        &\equiv \sup_{c > 0} \frac{c}{\mathbf{1}(\{f \geq c\})}
                        = \sup\{c > 0 : \{f \geq c\} \neq \emptyset\} \\
                        &= \sup\{c > 0 : \sup_{P \in \P} f(P) \geq c\}
                        = \sup_{P \in \P} f(P).
                \end{align*}
            \end{proof}

        \subsection{Proof of Proposition \ref{prp:least_true_integration}}
            \begin{proof}
                We start with proving the $\F = \{f\}$ case.
                
                As $f$ is $\H$-order-measurable, the hypothesis
                \begin{align*}
                    H_f(P) 
                        := \{P' \in \P : f(P') \geq f(P)\}
                \end{align*}
                is in $\H$.
                Moreover, as $P \in H_f(P)$, Lemma \ref{lem:intersection_closed} implies $H_P \subseteq H_f(P)$.
    
                To start, we show $f(P') = \sup_{P \in \P} f(P) \ind{H_P}(P')$, $P' \in \P$.
                To show this, first observe that for every $P' \in \P$,
                \begin{align*}
                    \sup_{P \in \P} f(P)\ind{H_P}(P') = \sup_{P \in \P: P' \in H_P} f(P).
                \end{align*}
                Now, since $H_P \subseteq H_f(P)$, we have that $P' \in H_P$ implies $f(P') \geq f(P)$.
                Hence,
                \begin{align*}
                    \sup_{P \in \P : P' \in H_P} f(P) \leq f(P')
                \end{align*}
                Conversely, since $P' \in H_{P'}$, we obtain
                \begin{align*}
                     \sup_{P \in \P: P' \in H_P} f(P) \geq f(P').
                \end{align*}
                Combining the two yields $f = \sup_{P \in \P} f(P) \ind{H_P}$.
                
                Now, as $\e$ is an E-measure, we may interchange E-integration and supremum by Lemma \ref{lem:sup_interchange}:
                \begin{align*}
                    \int^E f(P')\,d\e(P')
                        &= \int^E \sup_{P \in \P} f(P) \ind{H_P}(P')\,d\e(P') \\
                        &= \sup_{P \in \P} \int^E f(P) \ind{H_P}(P')\,d\e(P').
                \end{align*}
                Applying Lemma \ref{lem:pos_hom} (positive homogeneity) and Lemma \ref{lem:indicator} (indicators) yields
                \begin{align*}
                    \int^E f(P) \ind{H_P}(P')\,d\e(P')
                        = f(P) \int^E \ind{H_P}(P')\,d\e(P')
                        = \frac{f(P)}{\e(H_P)}.
                \end{align*}
                Taking the supremum over $P \in \P$ proves
                \begin{align}\label{eq:proof_least_true_integration}
                    \int^E f \, d\e = \sup_{P \in \P} \frac{f(P)}{\e(H_P)}.
                \end{align}

                Now, define $g := \sup_{f \in \F} f$, which is $\H$-order-measurable by Lemma \ref{lem:measurable_supremum}.
                Applying \eqref{eq:proof_least_true_integration} with $f = g$ yields 
                \begin{align*}
                     \int^E \sup_{f \in \F} f \, d\e 
                        = \sup_{f \in \F} \sup_{P \in \P} \frac{f(P)}{\e(H_P)}.
                \end{align*}
            \end{proof}
\end{document}